\documentclass[a4paper,11pt]{article}

\usepackage{hyperref} %BUG if put after
\usepackage{url}

\usepackage[english]{babel}

\usepackage{amsmath,amssymb} % define this before the line numbering.
\usepackage{xcolor}
\usepackage{array}% http://ctan.org/pkg/array
\usepackage{rotating}
\usepackage{graphicx}

\usepackage{mystyle}
\usepackage{notations}

\DeclareGraphicsExtensions{.png,.pdf,.eps,.jpg} 
\graphicspath{{./images/}}

\hypersetup{  
   bookmarks=true,
   backref=true,
   pagebackref=false,
   colorlinks=true,
   linkcolor=blue,
   citecolor=red,
   urlcolor=blue,
   pdftitle={Sparse Spikes Superresolution with concentrated or arbitrary Sampling},
   pdfauthor={V. Duval},
   pdfsubject={Positive sparse Spikes Super-resolution}
}

\title{A characterization of the Non-Degenerate Source Condition in Super-Resolution}

\author{Vincent Duval\footnotemark[1]\ \footnotemark[2]}

\begin{document}

\maketitle

\footnotetext[2]{CEREMADE, Universit\'e Paris-Dauphine, Place du Marechal De Lattre De Tassigny, 75775 PARIS CEDEX 16, FRANCE.}
\footnotetext[1]{INRIA, Mokaplan}

% !TEX root = ../Concentrated.tex

\begin{abstract}
  In a recent article, Schiebinger \textit{et al.} provided sufficient conditions for the noiseless recovery of a signal made of $\M$ Dirac masses given $2\M+1$ observations of, \eg, its convolution with a Gaussian filter, using the Basis Pursuit for measures.
%For that purpose, the authors introduced a weight in the total variation which is equivalent to an $L^1$ renormalization of the atoms, and which ensures some Tchebycheff property. That renormalization is necessary for the conditions to hold for the Gaussian filter.
In the present work, we show that a variant of their criterion provides a necessary and sufficient condition for the Non-Degenerate Source Condition (NDSC) which was introduced by Duval and Peyr\'e to ensure support stability in super-resolution. We provide sufficient conditions which, for instance, hold unconditionally for the Laplace kernel provided one has at least $2\M$ measurements. For the Gaussian filter, we show that those conditions are fulfilled in two very different configurations: samples which approximate the uniform Lebesgue measure or, more surprisingly, samples which are all confined in a sufficiently small interval.
\end{abstract}

\begin{keywords}Super-resolution; $\ell^1$ norm; Total Variation minimization; LASSO for measures; T-Systems\end{keywords}

\begin{AMS}90C25, 68U10\end{AMS}

% !TEX root = ../SiimsLaplace.tex

\section{Introduction}

Super-resolution imaging, or observing small structures below the diffraction limit, is a fundamental problem in optical science, concerning applications such as microscopy~\cite{mccutchen_superresolution_1967}, astronomy~\cite{puschmann2005super}, medical imaging~\cite{greenspan_super-resolution_2009}. It is also of great importance in radar sensing~\cite{odendaal_two-dimensional_1994} and geophysics~\cite{khaidukov_diffraction_2004}. While practical methods have made tremendous progress, the mathematical understanding of that problem is still limited. With the seminal works on sparse super-resolution~\cite{deCastro-exact2012,bredies-inverse2013,candes-towards2013} and the simultaneous emergence of ``off-the-grid'' methods for line spectral estimation~\cite{bhaskar-atomic2011,tang2013compressed}, several theoretical advances have been made in the recovery of sparse signals in a continuous domain. 

Typically, given a domain $\dompos\subseteq \RR$, one wants to recover the signal 
\begin{align}
  \mo \eqdef \sum_{i=1}^\M \amp_i\delta_{\x_i} 
\end{align}
where $\{\x_i\}_{i=1}^\M\subset\dompos$  are spikes locations and $\{\amp_i\}_{i=1}^\M\subset \CC$ are their amplitudes, but one only has access to the observation 
\begin{align}
  y=\sum_{i=1}^\M\amp_i \varphi(\x_i) + w,
\end{align}
where $\varphi(x)\in \Hh$ is the impulse response, or \textit{point spread function}  (in a generalized sense), $\Hh$ is a Hilbert space, and $w$ is some additive noise. For instance, one may consider $\dompos= [0,1]$, $\Hh=\CC^{K}$, and let $\varphi(x)=(c_{j_k}(m))_{1\leq k\leq K}$ be a vector containing the Fourier coefficients of $\delta_x$ for $K$ prescribed frequencies (see~\cite{candes-towards2013}). Alternatively, one may choose $\Hh=\Ldeux(\RR)$ and $\varphi(x)=\tilde{\psi}(\cdot-x)$ is the impulse response of a linear translation invariant filter~\cite{tang2013atomic}. A more realistic setting is to consider that the convolution has been sampled, which can be handled using the more general choice 
\begin{align}
  \label{eq:Hsample}
  \varphi(x): \s\mapsto \psi(\x,\s) \qandq \Hh=\Ldeux(\domobs,\Ps),
\end{align}
where $\psi$ encodes the impulse response (\eg $\psi(\x,\s)=\tilde{\psi}(\s-\x)$ for a convolution) and $\Ps$ is some (positive) measure (typically, but not necessarily, with finite support) on $\domobs\subseteq \RR$, which we call sampling measure.

A ground-breaking result of~\cite{candes-towards2013} is that a measure $\mo$ on $\dompos=[0,1]$ (with periodic boundary condition) can be recovered from its first $2f_c+1$ Fourier coefficients, \ie for 
  \begin{align}
    w=0,\quad \varphi(x)\eqdef \left(e^{-2\imath\pi k x}\right)_{-f_c\leq k\leq f_c} \qandq    y^{(0)}=\int_\dompos\varphi(x)\d\mo(x),
  \end{align}
  by solving the convex minimization problem
  \begin{align}
    \min_{m\in \Mm(\dompos)} \abs{\m}(\dompos) \quad \mbox{s.t.} \int_\dompos\varphi(x)\d\m(x)=y^{(0)},\label{eq:introtvmin}%\int_\dompos\varphi(x)\d\mo(x),
  \end{align}
  provided $m_0$ is made of spikes which are at least separated by a distance of $C/f_c$ where $C\geq 1.26$ (see~\cite{fernandez2016super}). The minimization above is performed in $\Mm(\dompos)$, the space of all Radon measures on $\dompos$, and  $\abs{\m}(\dompos)$ denotes the dual norm of $\normi{\cdot}$ (also known as \textit{total variation}).
  
  That separation condition is in fact a fundamental limit of total variation recovery when looking for spikes with arbitrary sign (or phase), hence the term ``super-resolution'' in this context is arguable. Indeed, if one observes $K$ measurements, \ie $\varphi(x)=(\varphi_k(x))_{1\leq k\leq K}\in \CC^K$ with $\varphi_k$ smooth, the optimality of $m_0$ is equivalent to being able to interpolate its sign/phase  with a function of the form
  \begin{align}\label{eq:etaintro}
    \eta(x)=\sum_{k=1}^K \alpha_k \varphi_k(x), \quad \normi{\eta}\leq 1.
  \end{align}
  The span of $\{\varphi_k\}_{1\leq k\leq K}$ being finite-dimensional, there exists a constant $C>0$ such that the Bernstein inequality $\normi{\eta'}\leq C\normi{\eta}$ holds, which prevents $\eta$ to interpolate the values $-1$ and $+1$ at arbitrarily close locations. More quantitative bounds are provided in~\cite{tang2015resolution} for several families of functions, and in~\cite{2015-duval-focm} for Fourier measurements, explaining that~\eqref{eq:introtvmin} is unable to resolve opposite spikes that are too close to each other.

  However, the situation \textit{radically changes if all the amplitudes of $\mo$ are positive}. In that case, the authors of~\cite{deCastro-exact2012} notice that for impulse responses $\varphi=(\varphi_k)_{1\leq k\leq K}$ which satisfy some 
  $T$-system property~\cite{karlin1966tchebycheff}, $\mo$ is the unique solution to~\eqref{eq:introtvmin} regardless of the separation of its spikes. That phenomenon, which is investigated further in~\cite{schiebinger2015superresolution}, paves the way for \textit{effective superresolution using sparse convex methods}, considering variants which impose nonnegativity such as  
  \begin{align}
    \min_{m\in \Mm^+(\dompos)} \abs{\m}(\dompos) \quad \mbox{s.t.} \int_\dompos\varphi(x)\d\m(x)=y^{(0)}.\label{eq:intropostvmin}
  \end{align}
  Nevertheless, a critical issue in super-resolution problems is the stability to noise. The present paper discusses the stability to noise, in the sense of support stability,  of such reconstructions for arbitrarily close spikes when considering the regularized inverse problem
  \begin{align}\label{eq:lassoposintro}
  \min_{m\in \Mm^+(\dompos)} \la\abs{\m}(\dompos) +\frac{1}{2}\left\| \int_\dompos\varphi(x)\d\m(x) -y\right\|_\Hh^2,
\end{align}
given some noisy version $y=(y_k)_{1\leq k\leq K}$ of $y^{(0)}=\int_\dompos \varphi\d m_0$. As explained in~\cite{2015-duval-focm}, the crux of the matter is to  understand whether the \textit{vanishing derivatives precertificate}, a generalization of the Fuchs precertificate~\cite{fuchs2004on-sp}, is able to take the value $1$ on $\{\pos_i\}_{i=1}^{\M}$ while being less than $1$ in $\dompos\setminus\{\pos_i\}_{i=1}^{\M}$. We call this property the \textit{Non-Degenerate Source Condition}.
Building on the work~\cite{schiebinger2015superresolution}, we propose in Section~\ref{sec:tsystem} a necessary and sufficient condition for the Non-Degenerate Source Condition. We show that for some filters, that condition holds for any choice of $\domobs$ and $\Ps$  (that is, the repartition of the measurements and their respective weights in the $\Hh$-norm) provided there are at least $2\M$ samples, regardless of the separation of the points $\pos_1,\ldots,\pos_\M$. 

Such is the case of the Laplace filter (see Figure~\ref{fig:laplace}), as shown in Section~\ref{sec:laplacegauss}, which provides theoretical foundations for the techniques used in~\cite{Laplace} for microscopy imaging. Admittedly, the constructed dual certificates decay slowly when going away from the $\pos_i$'s, indicating that the stability constants are not so good, and the problem is numerically unstable at large scales. But reconstructing a measure from a few Laplace measurements is a fundamentally difficult problem, and any reconstruction method is bound to face similar limitations. Our results indicate that~\eqref{eq:lassoposintro} is able to provide a sound reconstruction method which is robust to small perturbations, at least for a small number of spikes. This property is confirmed by the numerical experiments of~\cite{Laplace}.

The case of the convolution with a Gaussian kernel is more subtle. We show in Section~\ref{sec:gauss} that~\eqref{eq:intropostvmin} and \eqref{eq:lassoposintro} yield a support-stable reconstructions provided that the convolution is fully observed (and $\Ps$ is the Lebesgue measure). In general, this property does not hold for any sampling measure $\Ps$, but it holds for discrete measures $\Ps$ which approximate the Lebesgue measure sufficiently well, see Figure~\ref{fig:gauss}.
Surprisingly, this support recovery property also holds in the case where the convolution is sampled in a very small interval. We find this ``confined sampling'' setting quite counter-intuitive, as the samples can be placed anywhere (provided they are in some small interval) with respect to the spikes to recover, contrary \eg to~\cite{bernstein2017deconvolution} which requires at least two samples in the neighborhood each spike to recover.

\begin{figure}
	\centering  
  {\includegraphics[width=0.32\linewidth,clip,trim=1cm 0.5cm 1cm 0.9cm]{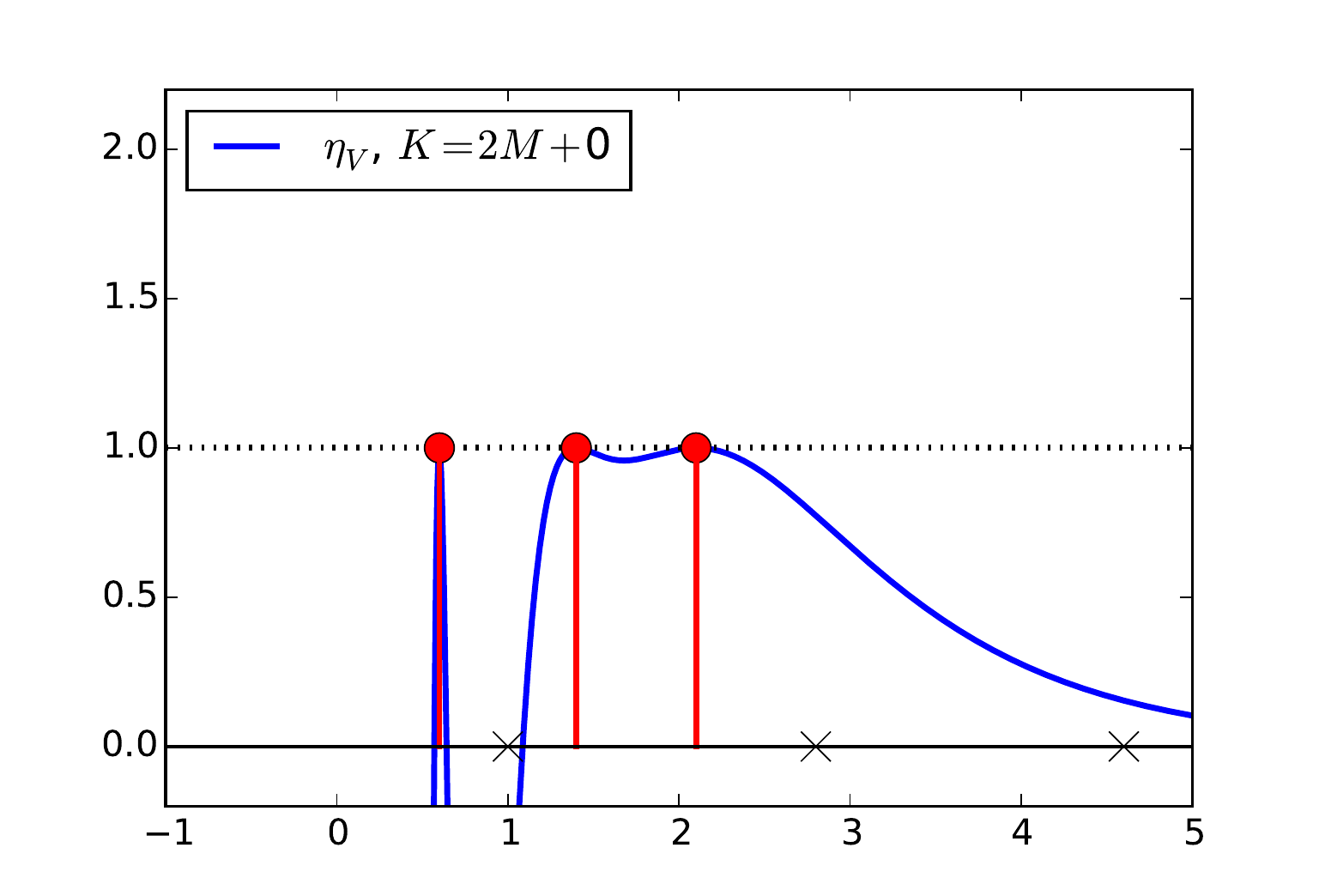}} %g b d h
  {\includegraphics[width=0.32\linewidth,clip,trim=1cm 0.5cm 1cm 0.9cm]{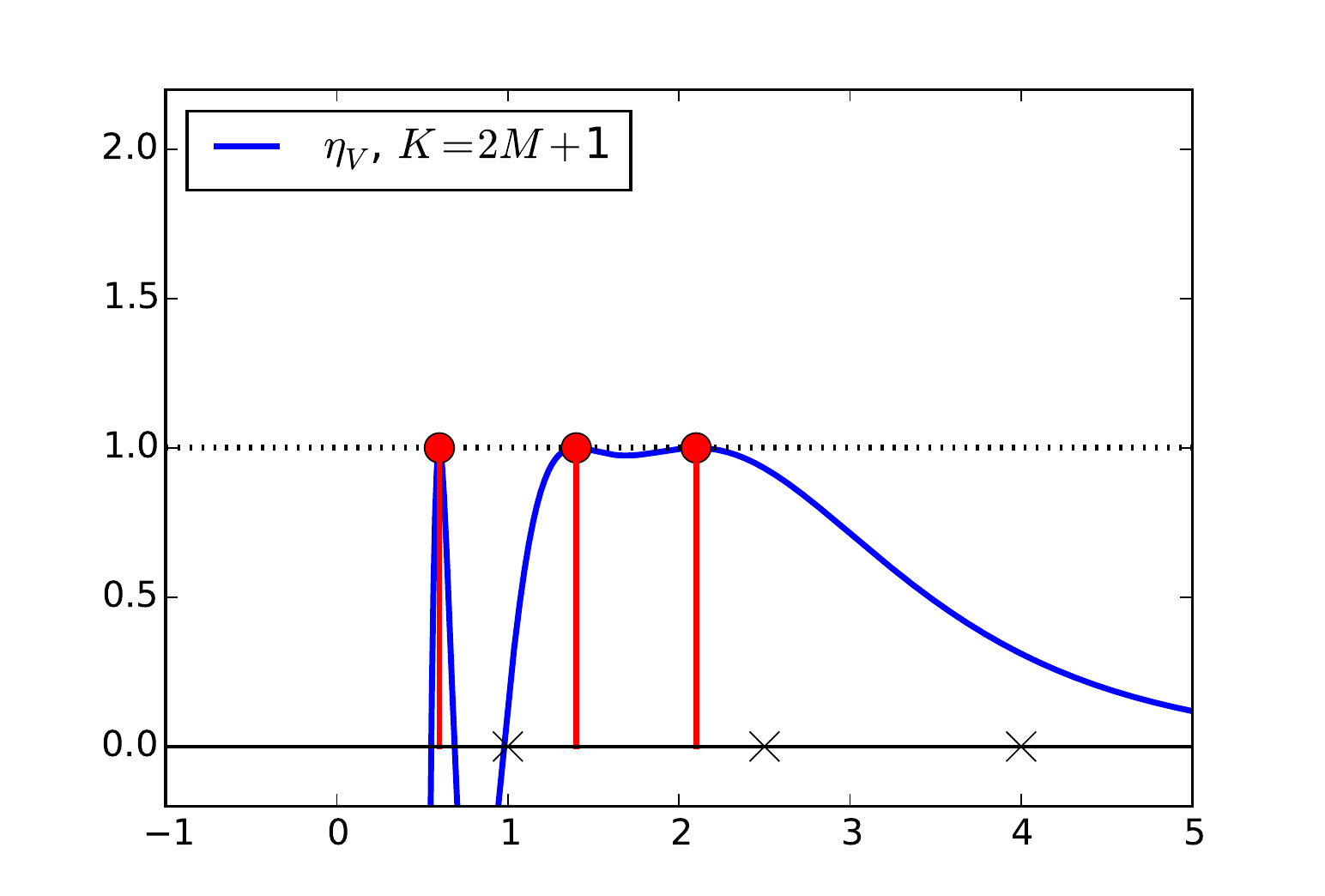}} %g b d h
  {\includegraphics[width=0.32\linewidth,clip,trim=1cm 0.5cm 1cm 0.9cm]{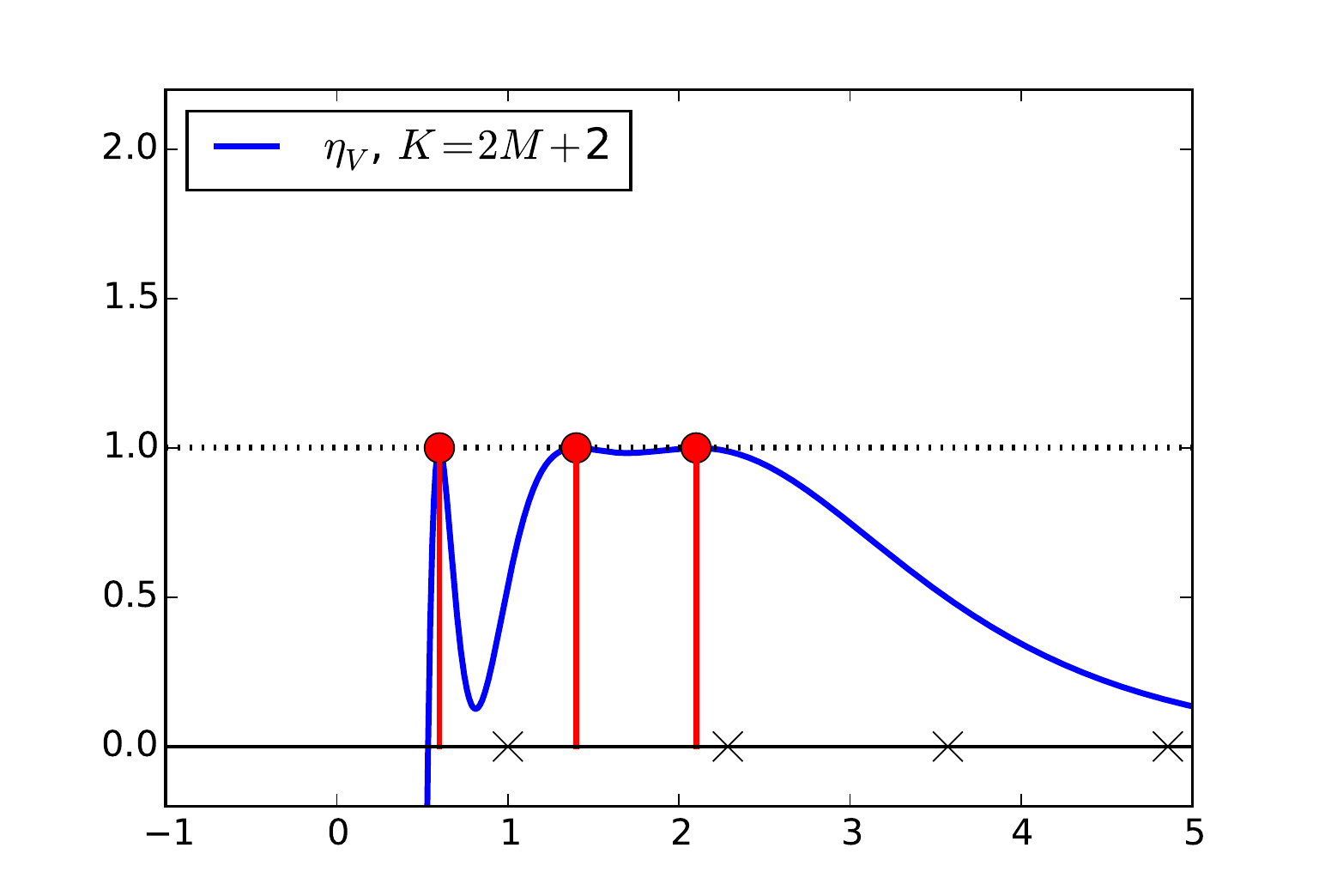}} %g b d h
  \caption{\label{fig:laplace}Vanishing derivatives precertificate for Laplace observations ($K=2M$, $2M+1$, and $2M+2$ measurements). It is sufficient to observe $K=2\M$ samples.}
\end{figure}

\begin{figure}
	\centering  
  {\includegraphics[width=0.32\linewidth,clip,trim=1cm 0.5cm 1cm 0.9cm]{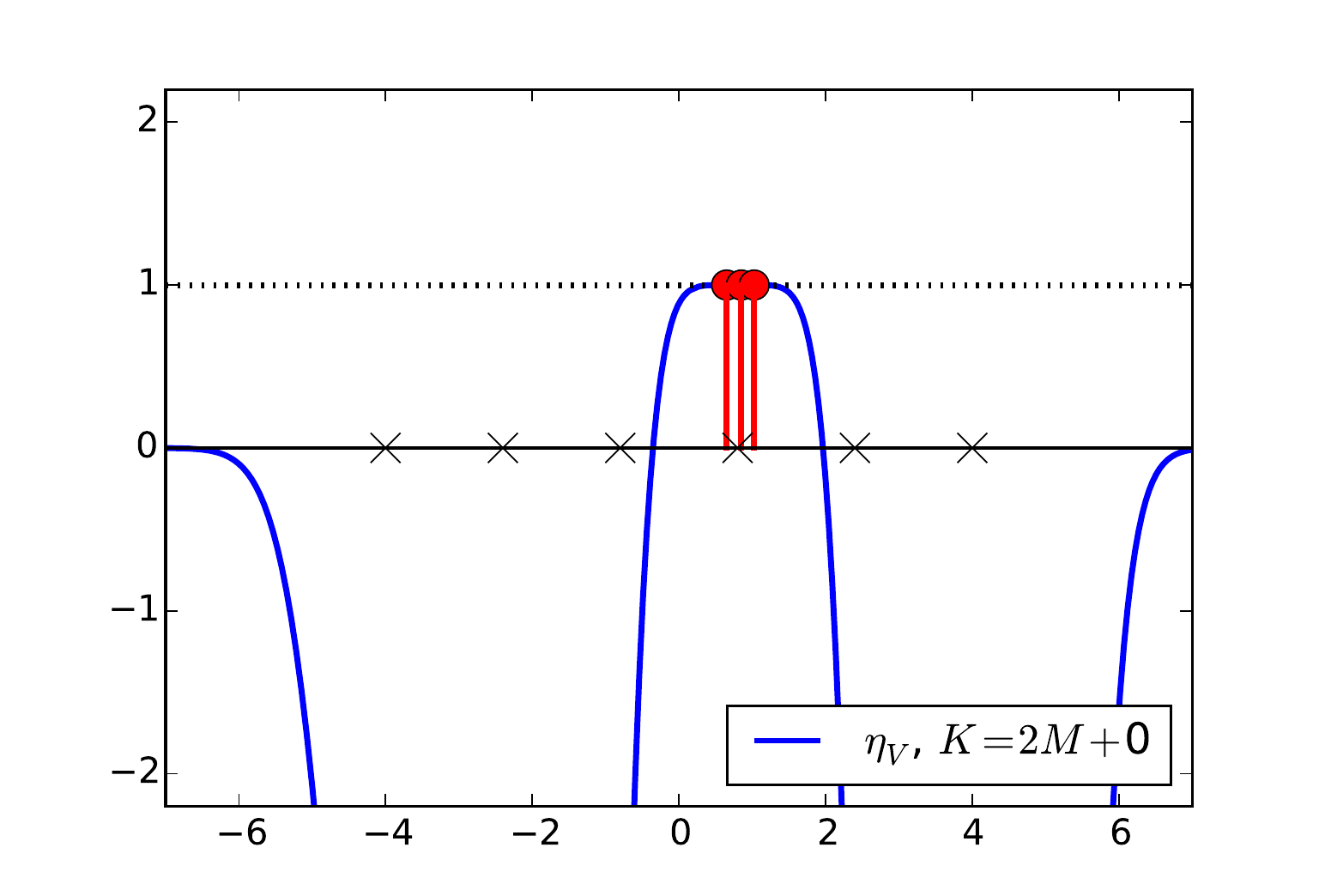}} %g b d h
  {\includegraphics[width=0.32\linewidth,clip,trim=1cm 0.5cm 1cm 0.9cm]{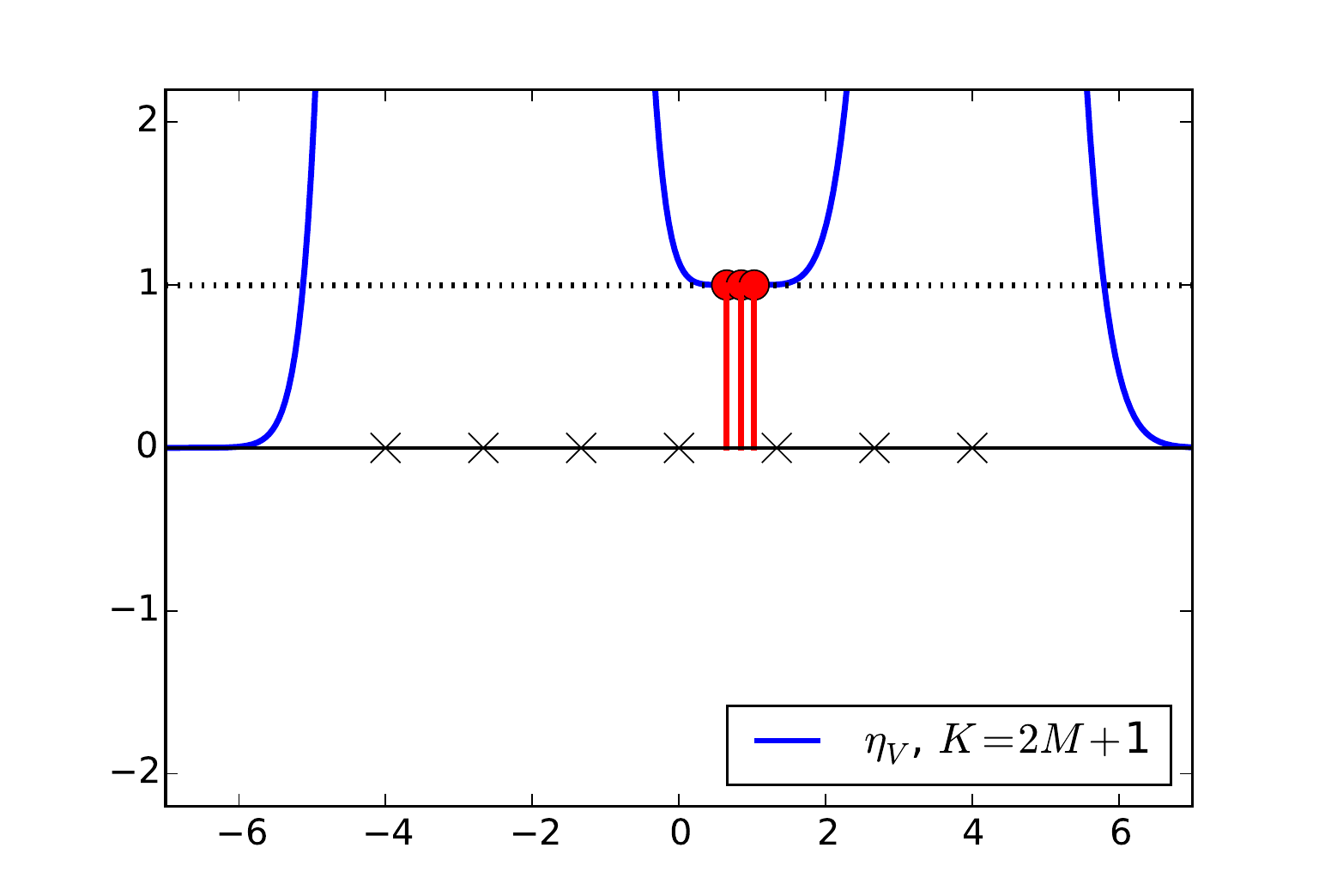}} %g b d h
  {\includegraphics[width=0.32\linewidth,clip,trim=1cm 0.5cm 1cm 0.9cm]{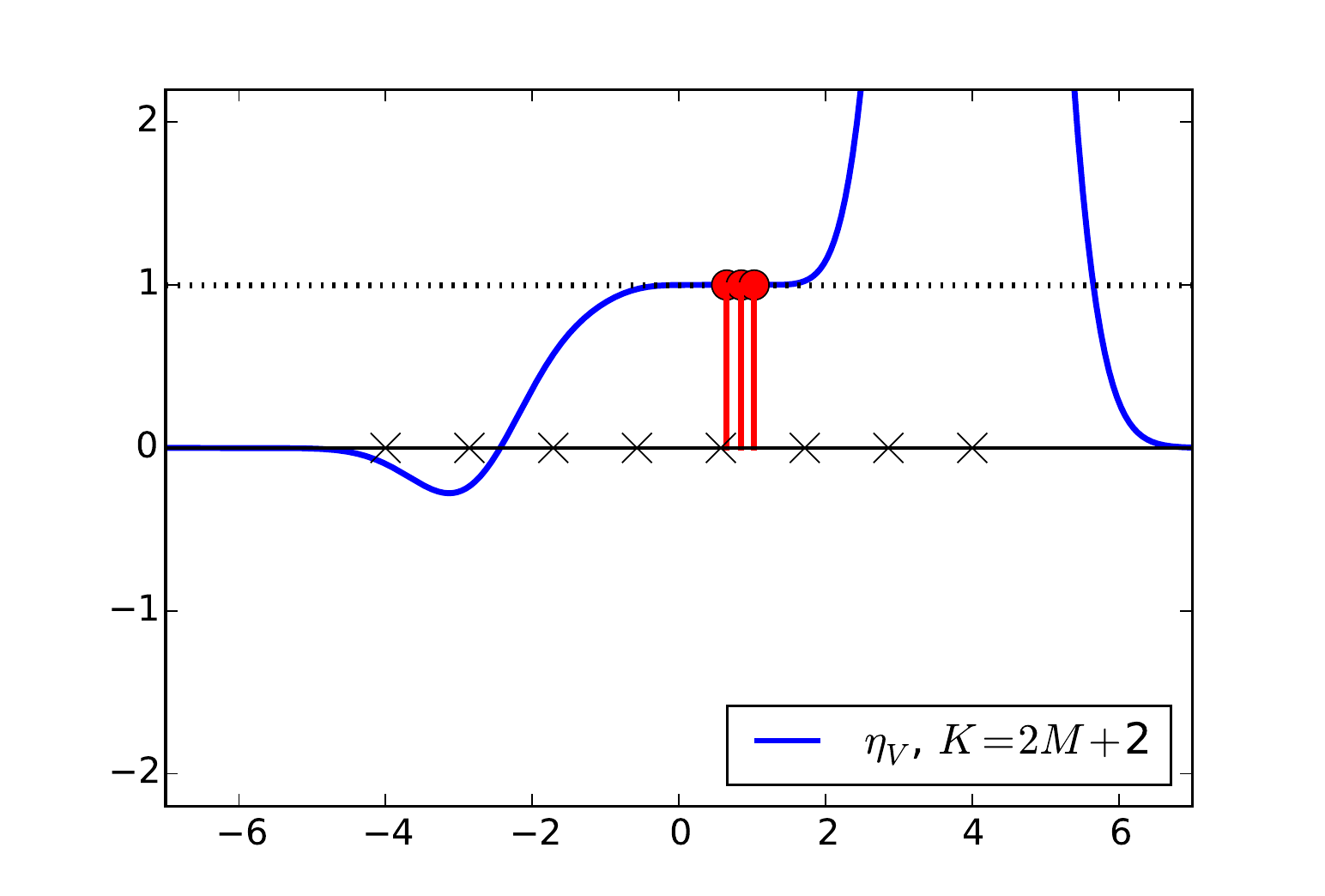}}\\ %g b d h
  {\includegraphics[width=0.32\linewidth,clip,trim=1cm 0.5cm 1cm 0.9cm]{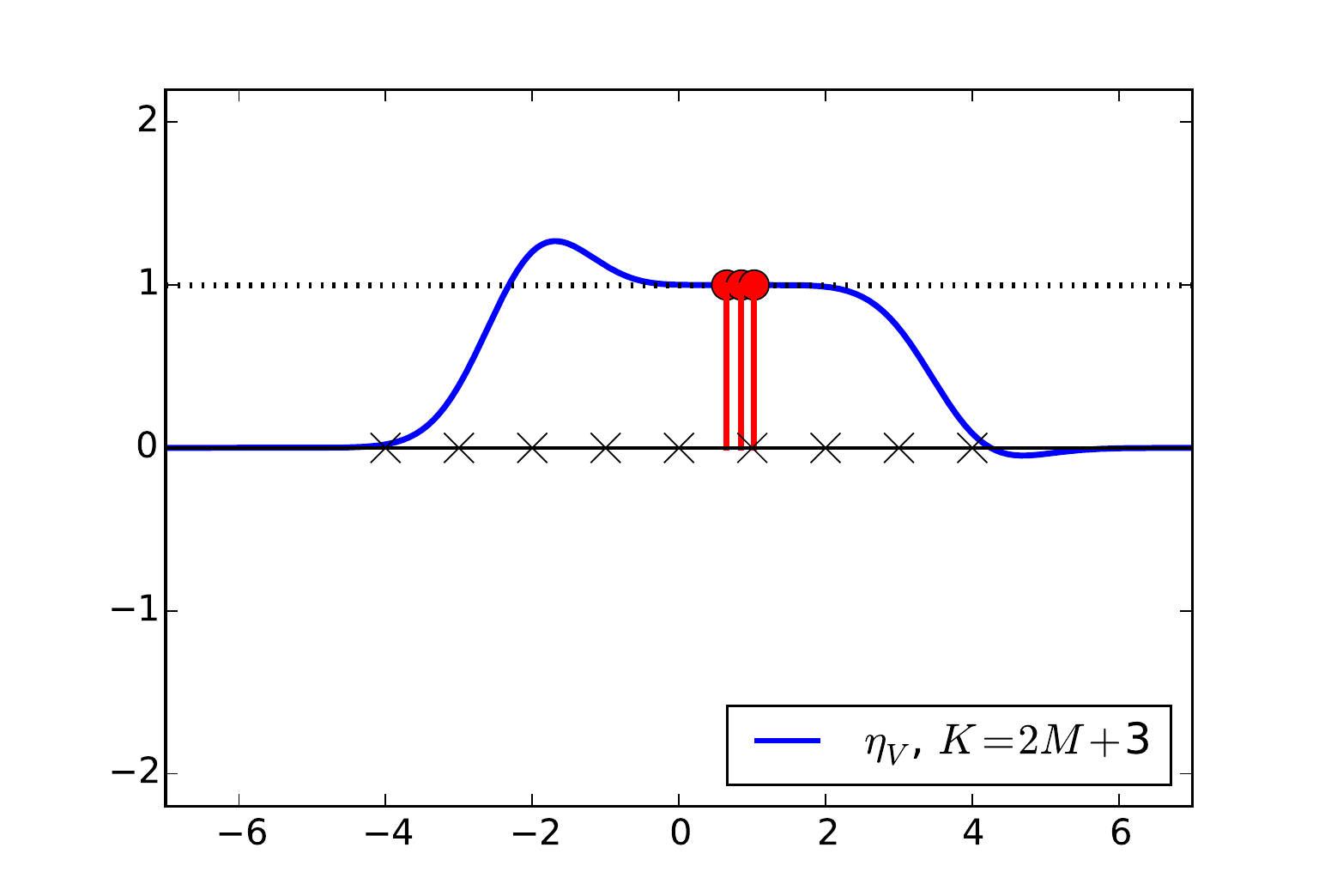}} %g b d h
  {\includegraphics[width=0.32\linewidth,clip,trim=1cm 0.5cm 1cm 0.9cm]{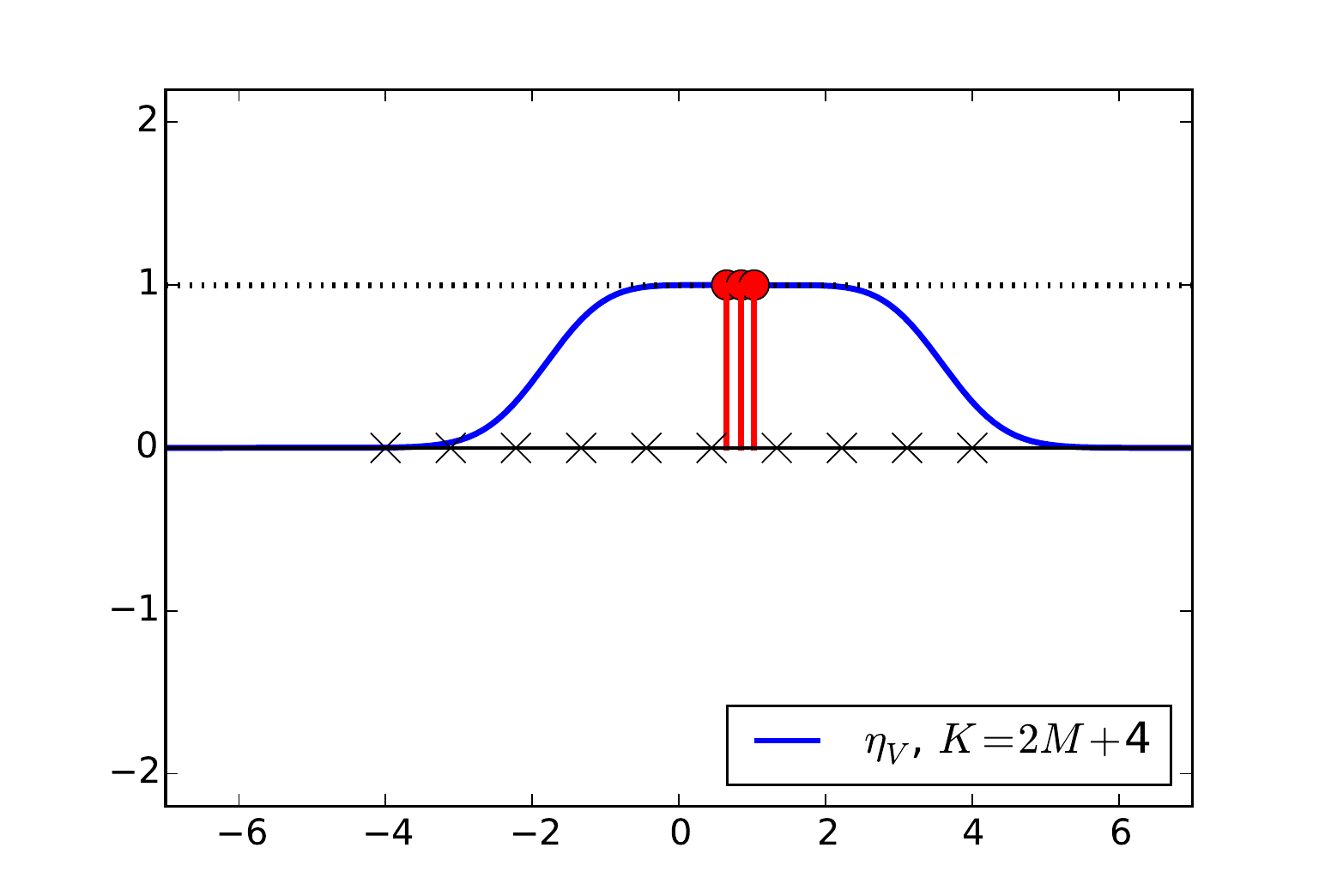}} %g b d h
  {\includegraphics[width=0.32\linewidth,clip,trim=1cm 0.5cm 1cm 0.9cm]{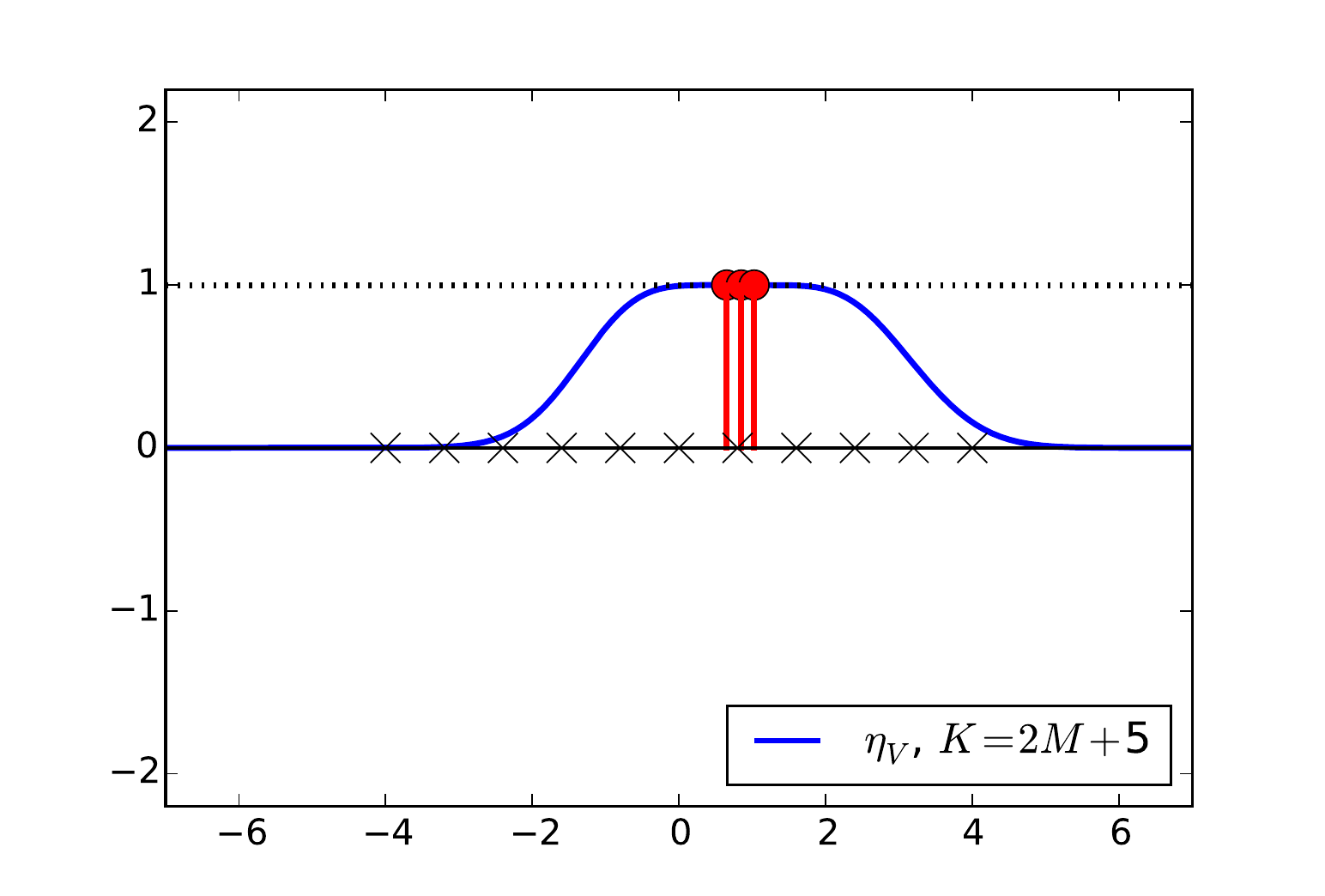}} %g b d h
  \caption{\label{fig:gauss}Vanishing derivatives precertificate for the sampled convolution with a Gaussian kernel $e^{-(\pos-\s)^2}$ (the locations of the samples $s$ are indicated by black crosses). The stability criterion ($\etaV<1$) is not always satisfied, but if the sampling measure $\Ps$ approximates the Lebesgue measure sufficiently well, it does hold.}
\end{figure}

\begin{figure}
	\centering  
  {\includegraphics[width=0.32\linewidth,clip,trim=1cm 0.5cm 1cm 0.9cm]{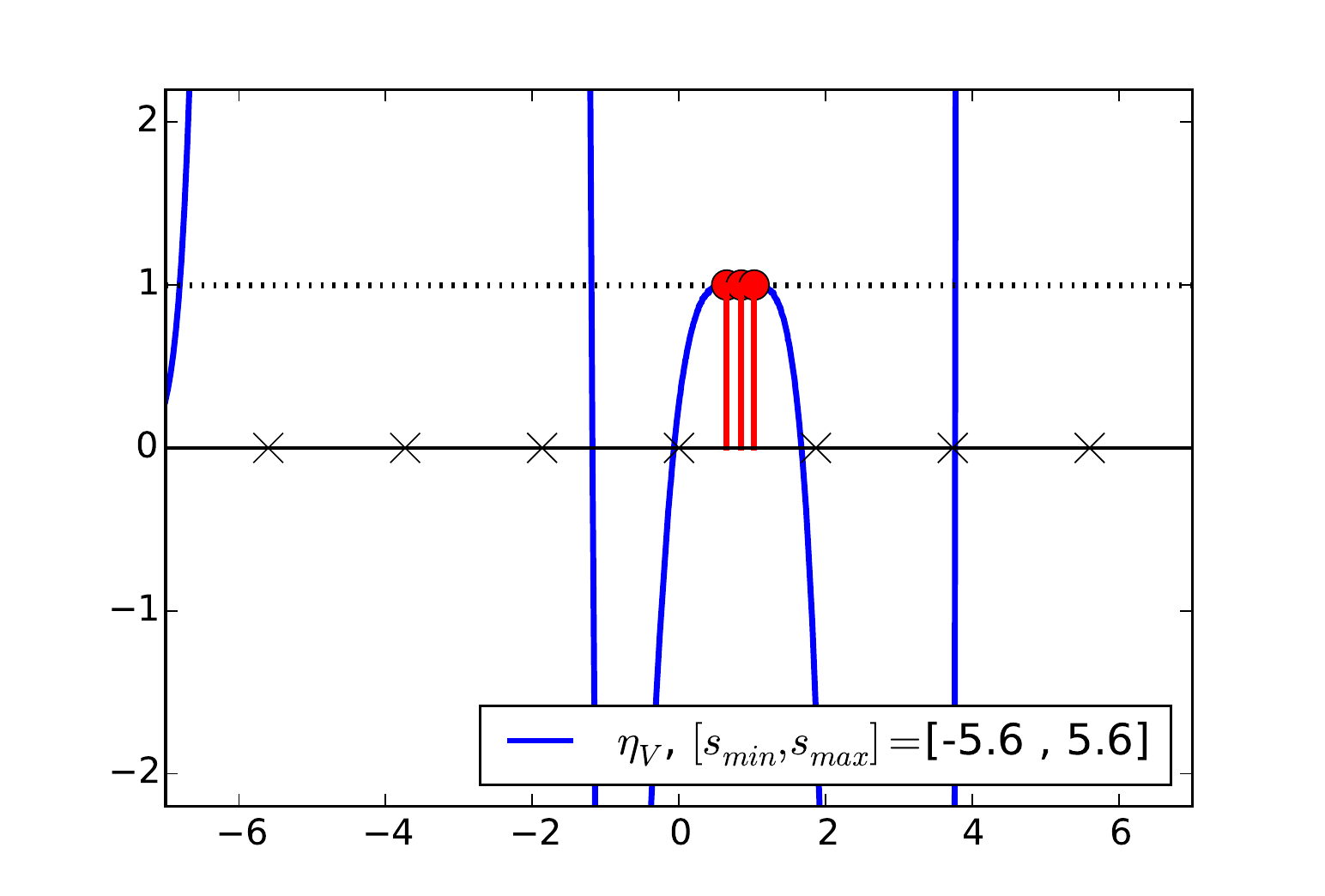}} %g b d h
  {\includegraphics[width=0.32\linewidth,clip,trim=1cm 0.5cm 1cm 0.9cm]{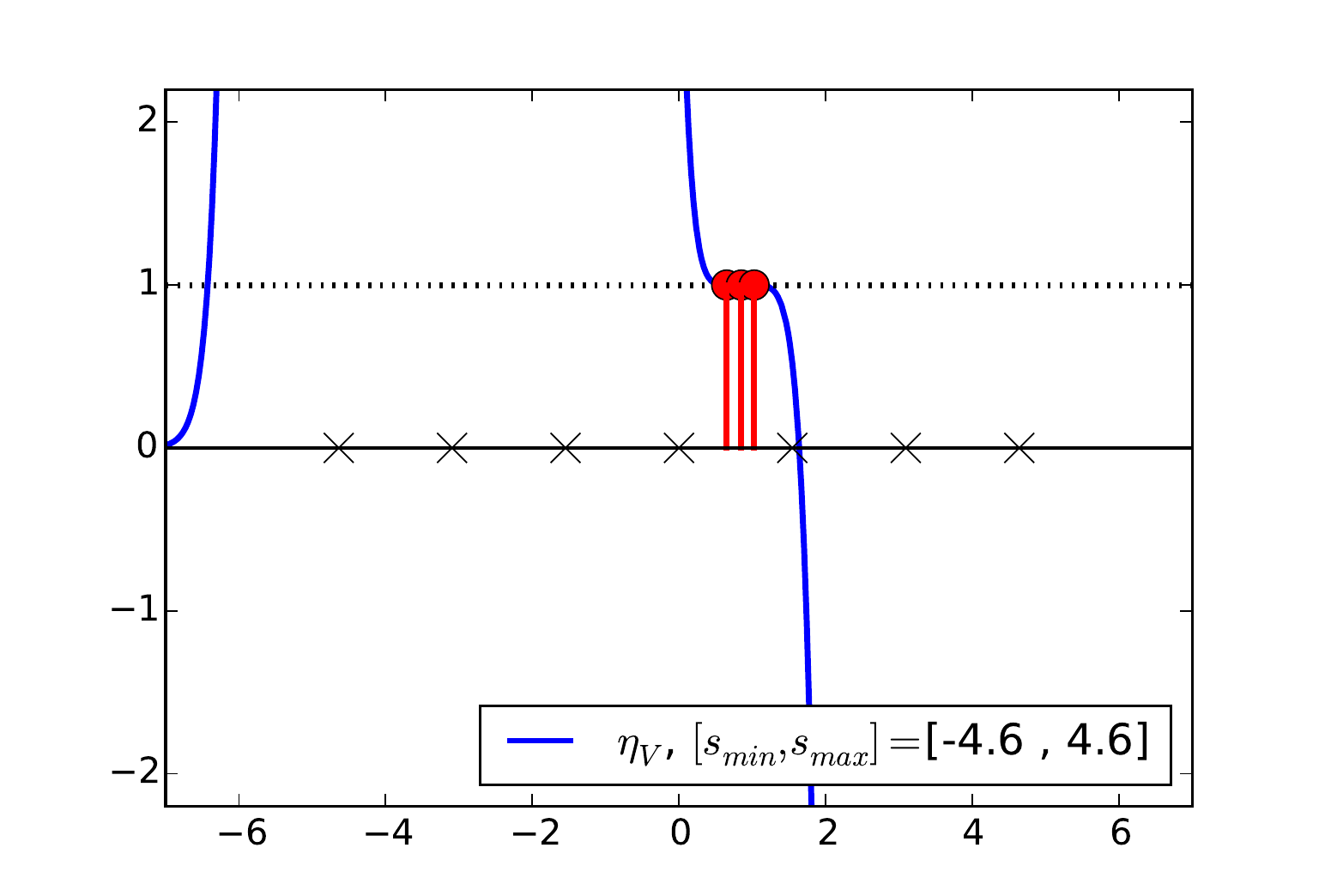}} %g b d h
  {\includegraphics[width=0.32\linewidth,clip,trim=1cm 0.5cm 1cm 0.9cm]{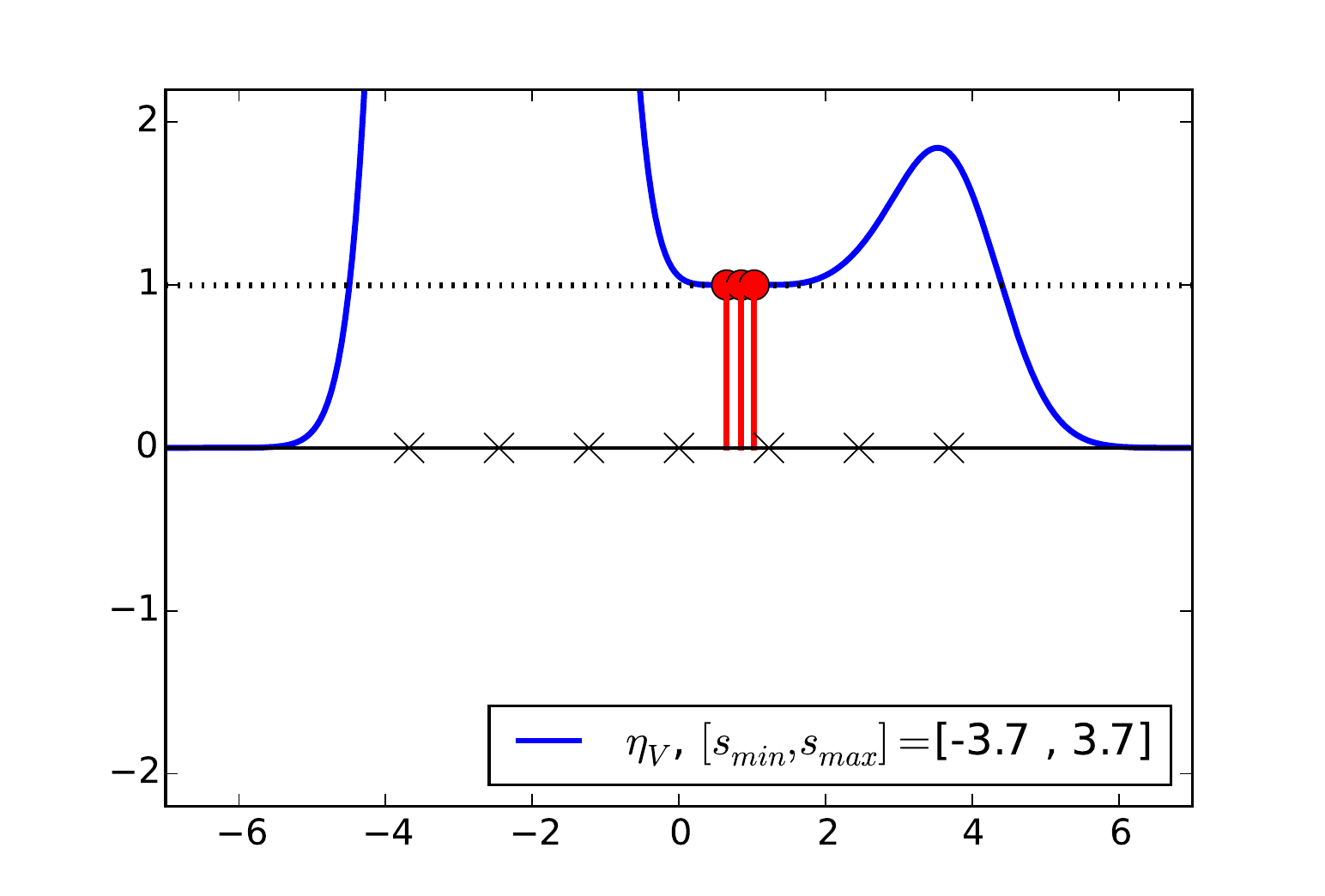}}\\ %g b d h
  {\includegraphics[width=0.32\linewidth,clip,trim=1cm 0.5cm 1cm 0.9cm]{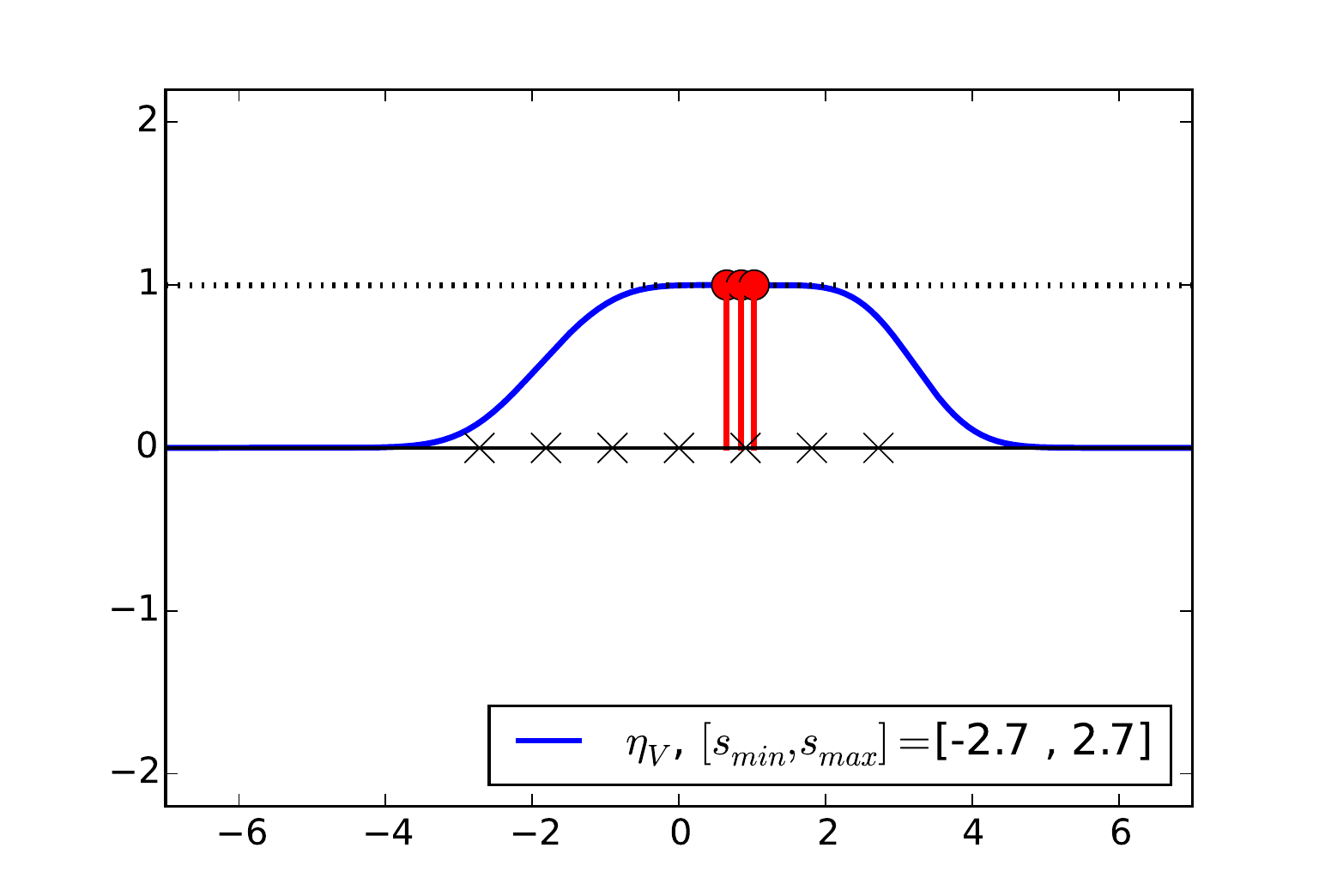}} %g b d h
  {\includegraphics[width=0.32\linewidth,clip,trim=1cm 0.5cm 1cm 0.9cm]{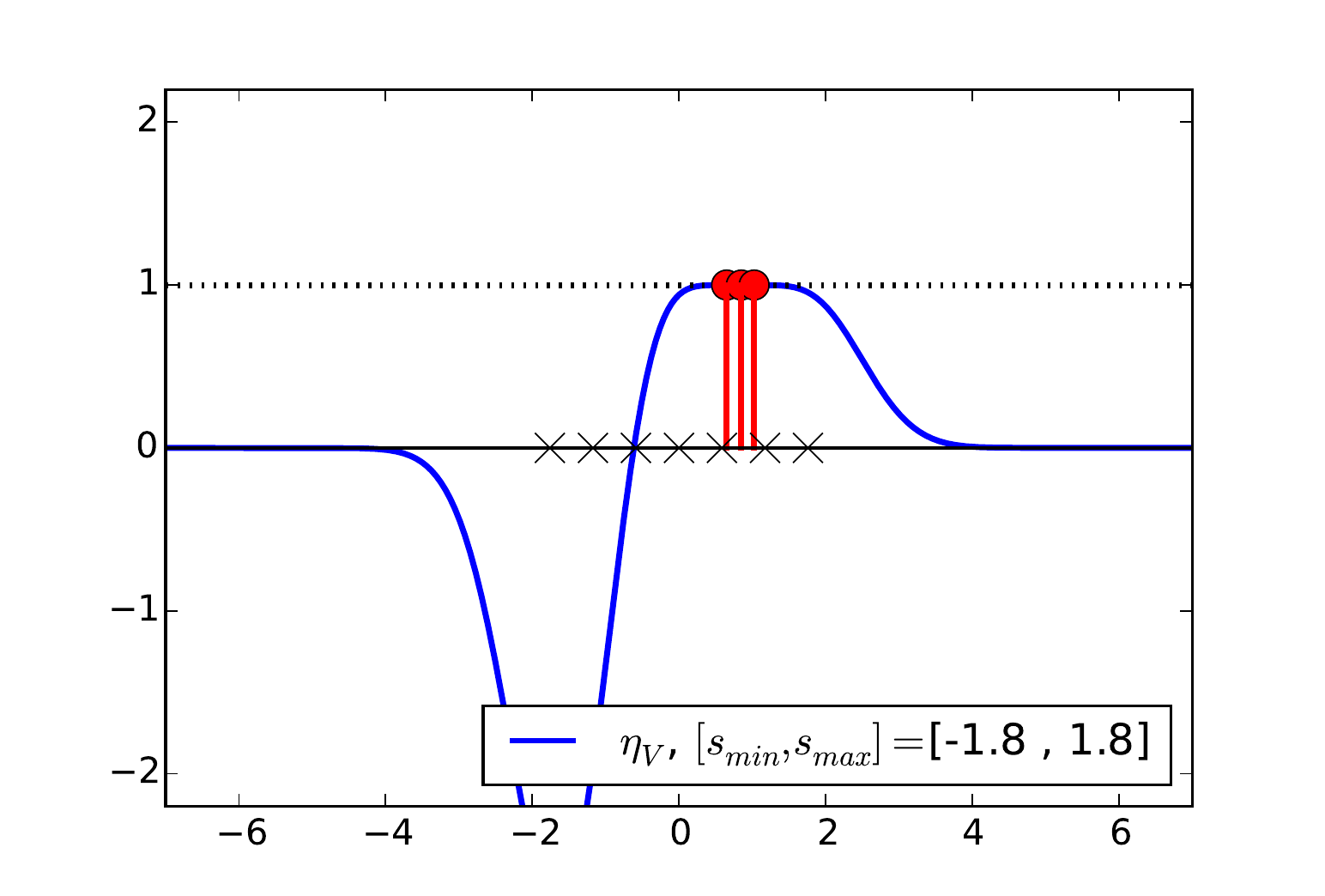}} %g b d h
  {\includegraphics[width=0.32\linewidth,clip,trim=1cm 0.5cm 1cm 0.9cm]{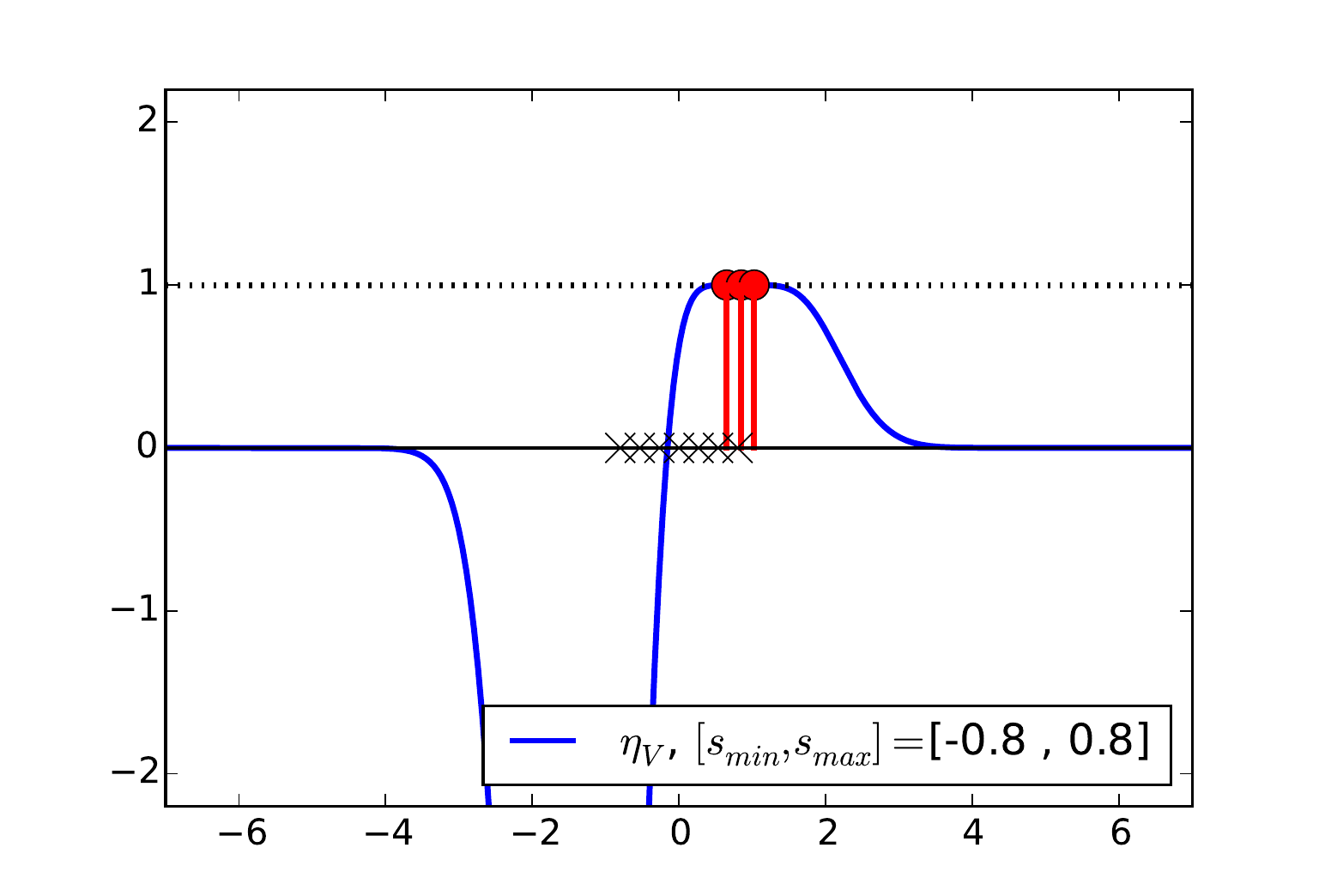}} %g b d h
  \caption{\label{fig:confinedgauss}Vanishing derivatives precertificate for the sampled convolution with a Gaussian kernel $e^{-(\pos-\s)^2}$ (the locations of the samples $s$ are indicated by black crosses). If the samples are sufficiently confined (bottom row), the precertificate is valid, $\etaV< 1$.}
\end{figure}

%%%%%%%%%%%%%%%%%%%%%%%%%%%%%%%%%%%%%%%%%%%%%%%%%%%%%%%%%%%%%%%%
\subsection{Previous Works}
\paragraph{Discrete $\ell^1$ regularization} The use of $\ell^1$-type techniques for the resolution of inverse problems originates from works in geophysics for seismic exploration~\cite{Claerbout-geophysics,santosa1986linear,Levy-Fullagar-81}. Their popularization in signal processing and statistics comes respectively from the seminal works of  Donoho~\cite{donoho-superresolution1992,chen-atomic1998} and Tibshirani~\cite{tibshirani-regression1994}. One usually defines a discrete grid and solves a variational problem on that grid. The literature concerning the robustness of such models is abundant, let us mention~\cite{DossalMallat,fuchs2004on-sp,tropp2006just} which guarantees for exact support recovery, and~\cite{Grasmair-cpam} which bounds the $L^2$ recovery error.

\paragraph{Off-the-grid methods} However, using a discrete grid only approximates the problem to be solved~\cite{tang2013sparse,PiaThesis}, generally breaking the sparsity assumption of the signal (the so-called \textit{basis mismatch} phenomenon~\cite{chi2011sensitivity}): some resulting artifacts are described in~\cite{2017-Duval-IP}.
To circumvent the problem, several researchers~\cite{deCastro-exact2012,bhaskar-atomic2011,bredies-inverse2013,candes-towards2013} proposed to work in a gridless setting, turning the finite dimensional \textit{basis pursuit} or \textit{lasso} problems into variational problems in the space of Radon measures. Within that new framework, the notion of minimum separation of the spikes  is the cornerstone of several identifiability results~\cite{deCastro-exact2012,candes-towards2013,tang2013compressed} as well as robustness results~\cite{candes-superresolution2013,azais-spike2014,2015-duval-focm}. 
The particular case of positive sources, which does not impose any separation of the spikes, has been studied in~\cite{deCastro-exact2012,schiebinger2015superresolutionjournal} for identifiability, and in~\cite{2017-denoyelle-jafa} for noise robustness (see also~\cite{candes-stable2014} in a discrete setup). In dimension $d\geq 2$, the situation is considerably more difficult, the staility constants are different, as shown in~\cite{poon_multi-dimensional_2017}.

\paragraph{Non variational methods} Let us also note that, while we focus here on a variant of the Lasso for super-resolution, there is a large panel of methods designed to tackle super-resolution problems. To name a few, MUSIC~\cite{Odendaal-music} ESPRIT~\cite{Kailath_1990}, finite rate-of-innovation~\cite{vetterli-sparse2008}, matrix pencil~\cite{hua1990matrix} or Cadzow denoising~\cite{Condat-Cadzow} techniques are also worth considering. Let us also mention \cite{demanet-recoverability2014,traonmilin:hal-01509863} which tackle the super-resolution problem using the (non-convex) tools from sparsity ($\ell^0$ norm or $k$-sparse vectors). Note that there is a deep connection between super-resolution and machine learning problems such as the recovery of probability densities from random empirical generalized moments~\cite{gribonval:hal-01544609}, which paves the way for efficient methods in the setting of random measurements.
It seems that, for a given problem, the method of choice will strongly depend on the desired properties: flexibility on the measurement operator, robustness to noise, ability to handle signals with signs or phases. We do not claim that the Lasso for measures is universally better than the above-mentioned methods, we simply regard~\eqref{eq:intropostvmin} as an interesting variational problem that we wish to understand.

%%%%%%%%%%%%%%%%%%%%%%%%%%%%%%%%%%%%%%%%%%%%%%%%%%%%%%%%%%%%%%%%
\subsection{Contributions}
In the present work, we propose a necessary and sufficient condition for the \textit{Non-Degenerate Source Condition} (Theorem~\ref{thm:tsystem}) which ensures support stability when trying to solve the super-resolution problem using~\eqref{eq:lassoposintro}. This condition can be seen as some infinitesimal variant of the Determinantal condition in~\cite{schiebinger2015superresolutionjournal}, its main advantage is that it ensures the non-vanishing of the second derivatives.

We show that when the observations consist of several samples of an integral transform, it is possible to ensure a priori, for some kernels, that those conditions hold regardless of the positions of the (sufficiently many) samples. Such is the case of the partial Laplace transform or the sampled convolution with a Gaussian (with an $L^1$ normalization).
In the case of the unnormalized Gaussian, we show that similar results hold provided the samples approximate the uniform Lebesgue measure sufficiently well, or, on the contrary, if they are drawn in a sufficiently small interval.

The proposed results heavily rely on the techniques introduced in~\cite{schiebinger2015superresolutionjournal} in the field of sparse recovery. Nevertheless, we develop here a self-contained theory, which benefits from the following key differences:

 \begin{itemize}
  \item Our approach is free of normalization, whereas~\cite{schiebinger2015superresolutionjournal} requires an $L^1$-type normalization of the atoms. Despite the good empirical results reported in~\cite{schiebinger2015superresolutionjournal}, it is arguable whether this normalization is the one to use, as the problem~\eqref{blasso} seems naturally Hilbertian. Also, we clarify the relationship between the $L^1$ normalization and the invoked conditions.

  \item On a related topic (see Section~\ref{sec:cauchy}), the approach in~\cite{schiebinger2015superresolutionjournal} naturally requires $2\M+1$ observations of the filtered signal (where $m_0$ has $\M$ spikes), our approach without normalization only needs $2\M$, which is the optimal requirement.

  \item Most importantly, we ensure the stability of the support at low noise, whereas~\cite{schiebinger2015superresolutionjournal} only provides identifiability.
\end{itemize}

%%%%%%%%%%%%%%%%%%%%%%%%%%%%%%%%%%%%%%%%%%%%%%%%%%%%%%%%%%%%%%%%
\subsection{Notations and Preliminaries}
\label{sec:intro-notations}
\paragraph{Functional spaces}
Let $\dompos$ be an interval of $\RR$. We denote by $\CoX$ the space of $\RR$-valued continuous functions which vanish ``at infinity'', \ie for all $\varepsilon>0$, there exists a compact subset $K\subset \dompos$ such that 
\begin{align*}
  \forall x\in \dompos\setminus K,\quad \abs{\eta(x)}&\leq \varepsilon.
\end{align*}
More explicitly, \begin{itemize}
  \item If $\dompos=[a,b]\subset \RR$, $\CoX=\Cder{}([a,b])$ is the space of continuous functions on $[a,b]$.
  \item If $\dompos=[a,b)\subset \RR$, $\CoX$ is the space of continuous functions $\eta$ on $[a,b)$ such that $\lim_{\pos\to b}\eta(\pos)=0$.
  \item If $\dompos=(a,b)\subseteq \RR$, $\CoX$ is the space of continuous functions $\eta$ on $(a,b)$ such that $\lim_{\pos\to a}\eta(\pos)=\lim_{\pos\to b}\eta(\pos)=0$.
\end{itemize}

Endowed with the supremum norm, $\CoX$ is a Banach space whose dual is the space $\radon$ of finite Radon measures (see~\cite{AFP00}). We denote by $\radonpos$ the set of nonnegative Radon measures. In the rest of the paper, we regard $\CoX$ and $\radon$ as locally convex topological vector spaces, and we endow them respectively with their weak and weak-* topologies. With that choice, both spaces are dual to each other.

%The dual norm of Radon measures, also known as \textit{total variation}, is defined by
%\begin{align}
%  \label{eq:deftv}
%  \ltvn(m) &\eqdef \tvn{m}\eqdef \sup\enscond{\int_{\dompos}\eta\d m}{\eta\in \CoX \mbox{ and }\normi{\eta}\leq 1}.
%\end{align}
%Being the support function of a closed convex set in $\CoX$, its subdifferential is 
%\begin{align}
%  \label{eq:}
%  \partial\ltvn(m)\eqdef \enscond{\eta\in \CoX}{\normi{\eta}\leq 1 \ \mbox{and}\ \int_\dompos \eta\d m=\tvn{m} }
%\end{align}
%In view of the inequality $\normi{\eta}\leq 1$, the condition $\int_\dompos \eta\d m=\tvn{m}$ is equivalent to
%\begin{align}
%  \eta(x)&=1\ \mbox{for all $x\in \Supp m^+$},\quad
%  \mbox{and } \eta(x)=-1\ \mbox{for all $x\in \Supp m^-$}, 
%\end{align}
% where $m^+$ and $m^-$ are respectively the positive and negative parts of $m$ in its Jordan decomposition $m=m^+-m^-$~\cite[Def. 1.4]{AFP00}.
%
The functional 
\begin{align*}
\lmass(m)&\eqdef  m(\dompos)+\chi_{\radonpos}=\begin{cases}
  m(\dompos) & \mbox{if $m$ is nonnegative,}\\
  +\infty & \mbox{otherwise.}
  \end{cases}
\end{align*}
can be written as a support function 
\begin{align*}
  \lmass(m) &=\sup\enscond{\int_{\dompos}\eta\d m}{\eta\in \CoX \mbox{ and }\sup_{\pos\in \dompos}{\eta}(\pos)\leq 1},
 \end{align*}
so that its subdifferential is
\begin{align}\label{eq:subdiffmass}
  \partial\lmass(m)= \enscond{\eta\in \CoX}{\sup{\eta}\leq 1 \ \mbox{and}\ \int_\dompos \eta\d m=m(\dompos) }.
\end{align}

\paragraph{Linear operators}
Let $\Hh$ be a real separable Hilbert space. Let $\varphi \in\Cder{}(\dompos;\Hh)$ be a continuous $\Hh$-valued function such that for all $p\in \Hh$, $\left(x\mapsto \dotpH{p}{\varphi(x)}\right)\in \CoX$. Note that by the Banach-Steinhaus theorem, $\sup_{\pos\in\dompos}\normH{\varphi(x)}<+\infty$.

Then, the linear operator $\Phi:\radon\rightarrow \Hh$,
\begin{align}\label{eq:defPhi}
  \Phi m\eqdef \int_{\dompos}\varphi \d m  
\end{align}
(where the above quantity is a Bochner integral~\cite[Sec. V.5]{yosida1995functional})
is weak-* to weak continuous, as shown by the equality
\begin{align*}
  \dotpH{\Phi m}{p}= \dotpH{\int_{\dompos}\varphi \d m }{p}=\int_{\dompos}\dotpH{\varphi(x)}{p} \d m(x).
\end{align*}
It also shows that the adjoint $\Phi^*: \Hh\rightarrow \CoX$ has full domain, with $(\Phi^*p): x\mapsto \dotpH{\varphi(x)}{p}$. We note that $\Phi^*$ is continuous (from the strong to the strong topology as well as) from the weak to the weak topology.

\paragraph{Admissible Kernels}
We say $\varphi$ is in $\kernel{k}$ if $\varphi \in\Cder{k}(\dompos;\Hh)$ and 
\begin{itemize}
  \item for all $p\in \Hh$, $\left(x\mapsto \dotpH{p}{\varphi(x)}\right)\in \CoX$.
  \item for all $0\leq i\leq k$, $\sup_{x\in\dompos}\normH{\varphi^{(i)}(x)}<+\infty$,
\end{itemize}
%Such a kernel  obviously satisfies the assumptions of the last paragraph.

Given $\bpos=(\pos_1,\ldots,\pos_\M)\in\dompos^\M$, we denote by $\Phi_{\bpos}:\RR^\M\rightarrow \Hh$ the linear operator such that 
\eq{
	\forall a\in \RR^\M, \quad	
	\Phi_{\bpos}(a) \eqdef \sum_{i=1}^{\M} a_i \phi(\pos_i),
}
and by $\Ga_{\bpos}:\RR^{2\M}\rightarrow \Hh$ the linear operator defined by
\eql{\label{eq:defgamma}
	%\Ga_{\bpos}\begin{pmatrix}a\\b\end{pmatrix} \eqdef 
	\Ga_{\bpos}b \eqdef 
  \sum_{i=1}^{\M}\left( b_{2i-1} \phi(\pos_i) + b_{2i} \varphi'(\pos_i)\right).
}
We adopt the following matricial notation 
\eq{
	\Phi_{\bpos}=(\phi(\pos_1) \ \ldots \ \phi(\pos_{\M})) \qandq \Ga_{\bpos}=(\phi(\pos_1),\phi'(\pos_1) \ \ldots \ \phi(\pos_{\M}),\phi'(\pos_{\M})),
  %(\Phi_{\bpos} \ \Phi_{\bpos}'),
}
where the ``columns'' of those ``matrices'' are elements of $\Hh$. In particular, the adjoint operator $\Phi_{\bpos}^*:\Hh\rightarrow \RR^\M$ is given by
\eq{
  \forall p\in\Hh,\quad  \Phi_{\bpos}^*p=\left((\Phi^*p)(\pos_i)\right)_{1\leq i\leq \M}.
}

Given $\pos_0\in \dompos$, we denote by $\phiD{k} \in \Hh$ the $k^{th}$ derivative of $\phi$ at $\pos_0$, \textit{i.e.}
\eql{\label{eq-defn-phider}
\phiD{k} \eqdef \phi^{(k)}(\pos_0).
}
In particular, $\phiD{0} = \phi(\pos_0)$.

Given $k \in \NN$, we define
\eql{\label{eq-defn-Fk}
	\Fk \eqdef \begin{pmatrix} \phiD{0} & \phiD{1} & \ldots & \phiD{k} \end{pmatrix}.
}
If $\Fk: \RR^{k+1}\rightarrow \Hh$ has full column rank, we define its pseudo-inverse as $\Fk^+\eqdef (\Fk^*\Fk)^{-1}\Fk^*$. Similarly, we denote $\Gamma_{\bpos}^+ \eqdef (\Gamma_{\bpos}^*\Gamma_{\bpos})^{-1}\Gamma_{\bpos}^*$  provided $\Gamma_{\bpos}$ has full column rank.

% !TEX root = ../SiimsLaplace.tex

\section{The \blassosc{} with nonnegativity constraints}
\label{sec:blasso}

%%%%%%%%%%%%%%%%%%%%%%%%%%%%%%%%%%%%%%%%%%%%%%%%%%%%%%%%%%%%%
In this section, we describe the main properties of the positive Beurling LASSO (\pblassosc) that are needed in the paper. These are mainly minor adaptations of the properties stated in~\cite{2015-duval-focm,2017-denoyelle-jafa} for the \blassosc, and we state them here without proof. 

We assume that we are given a Hilbert space $\Hh$ and some kernel $\varphi\in \kernel{k}$ for some $k\geq 2$ as described above.
The choice of $\varphi$ and $\Hh$ is discussed in detail in Section~\ref{sec:blassoreconstruction}

\subsection{Variational Problems in the Space of Measures}

Our goal is to recover an unknown input measure 
\eq{
	\mo = \m_{\amp,\bpos} \eqdef \sum_{i=1}^\M \amp_{i} \dirac_{\pos_{i}}
} 
with $(\amp_{i},\pos_{i}) \in \RR \times \dompos$  from the observations $y=\Phi\mo+w\in \Hh$, where $\Phi$ is the linear operator defined in~\eqref{eq:defPhi}, and $w\in \Hh$ is some additive noise. In this paper, we assume that the amplitudes $\amp_{i}$ are positive.

To perform the reconstruction, we solve the following variant of the \blassosc{}, adding a nonnegativity constraint, that we denote by \pblassosc,
\begin{align}\label{blasso}\tag{$\blasso{y}$}
	\underset{m\in\radonpos}{\min}\ \la \m(\dompos)+ \frac{1}{2} \normH{\Phi \m - y}^2,
\end{align}
where $\la>0$ is some regularization parameter.
In the noiseless case ($w=0$), one might as well solve the constrained problem
\begin{align}\label{eq:bpursuit}\tag{$\bpursuit{\Phi\mo}$}
  \underset{\substack{m\in\radonpos\\ \Phi m=\Phi\mo}}{\min}\ \m(\dompos).
\end{align}
It worth noting that~\eqref{eq:bpursuit} is the limit of~\eqref{blasso} as $\la\to 0$ and $\normH{w}\to 0$ in a suitable way (see~\cite{2015-duval-focm} for the case of the \blassosc).

%%%%%%%%%%%%%%%%%%%%%%%%%%%%%%%%%%%%%%%%%%%%%%%%%%%%%%%%%%%%%
\subsection{Low noise behavior and exact support recovery}
From now on, in order to simplify the discussion, we assume that $\pos_i\in \interop(\dompos)$ for all $1\leq i\leq \M$, \ie $\inf \dompos<\pos_{i}<\sup\dompos$.

\paragraph{Exact support recovery}
In~\cite{2015-duval-focm}, we have provided an almost sharp sufficient condition for the exact support recovery at low noise.
A (simplified) version of that condition is the following.
If $\Gamma_{\bpos}$ (see~\eqref{eq:defgamma}) has full rank, we define the vanishing derivatives precertificate as
\begin{align}\label{eq:defpv}
  \etaV&\eqdef \Phi^*\pV, \qwhereq\\
  \pV&\eqdef\argmin\enscond{\normH{p}}{p\in\Hh, (\Phi^*p)(\pos_i)=1, (\Phi^*p)'(\pos_i)=0\ \mbox{for all $1\leq i\leq\M$} }.
\end{align}
In other words, $\pV=\Gamma_{\pos}^{+,*}u$ where $u\eqdef (1,0,1,0\ldots, 1,0)^T\in \RR^{2\M}$, and $\Gamma_{\pos}^{+,*}=\Gamma_{\pos}(\Gamma_{\pos}^*\Gamma_{\pos})^{-1}$ is the Moore-Penrose pseudo-inverse of $\Gamma_{\pos}$.
We note that by optimality of $\pV$ in~\eqref{eq:defpv}, there exists some coefficients $\{\alpha_i,\beta_i\}_{1\leq i\leq \M}\subset \RR$ such that 
  %\begin{align*}
    $\pV = \sum_{i=1}^\M \left(\alpha_i\varphi(\pos_i) + \beta_i\varphi'(\pos_i)\right)$, and $\etaV$ is the unique function of the form
    \begin{align}
      \etaV(\pos)&=\sum_{i=1}^\M \left(\alpha_i\dotpH{\varphi(\pos)}{\varphi(\pos_i)} + \beta_i\dotpH{\varphi'(\pos)}{\varphi'(\pos_i)}\right) \label{eq:etaVform}
    \end{align}
  which satisfies $\etaV(\pos_i)=1$ and $\etaV'(\pos_i)=0$ for all $i\in\{1,\ldots,\M\}$.

\begin{definition}[NDSC]
Assume that $\Gamma_{\bpos}$ has full rank. We say that  $\mo$ satisfies the \textit{Non Degenerate Source Condition} if $\etaV(\pos)<1$ for all $\pos\in \dompos\setminus\{\pos_1,\ldots,\pos_\M\}$ and $\etaV''(\pos_i)<0$ for all $1\leq i\leq \M$.
\end{definition}

As the next result shows, the nondegeneracy of $\etaV$ almost characterizes the support stability of the \pblassosc at low noise. When it holds, it entails $\abs{\tilde{\amp}_{i}(\la,w)-\amp_{i}}=O(\normH{w})$ and $\abs{\tilde{\pos}_i(\la,w)-\pos_i}=O(\normH{w})$ for $\la=\frac{1}{\alpha}\normH{w}$.

\begin{theorem}[\cite{2015-duval-focm}]\label{thm-duval}
  Assume that $\Gamma_{\bpos}$ has full rank and that $\mo$ satisfies the Non Degenerate Source Condition. Then there exists $\alpha>0$, $\lambda_0>0$ such that for all $0\leq \la\leq \la_0$ and $\normH{w}\leq \alpha\la$, the solution $\tilde{m}_{\la,w}$ to~\eqref{blasso} is unique and is composed of exactly $\M$ spikes, with $\tilde{m}_{0,0}=\mo$.

  Moreover one may write $\tilde{m}_{\la,w}=\sum_{i=0}^{\M}\tilde{\amp}_{i}(\la,w)\delta_{\tilde{\pos}_i(\la,w)}$, where $\tilde{\amp}_{i}$ and $\tilde{\pos}_i$ are the restriction of $\Cder{1}$ functions on a neighborhood of $(0,0)$.

  Conversely, if $\Gamma_{\bpos}$ has full rank and the solutions can be written in the above form with functions $\tilde{\amp}_{i}$ and $\tilde{\pos}_i$ which are $\Cder{1}$ and satisfy $\tilde{\amp}_{i}(0,0)=\amp_{i}$, $\tilde{\pos}_i(0,0)=\pos_i$, then $\etaV\leq 1$.
\end{theorem}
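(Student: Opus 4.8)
The plan is to follow the strategy of~\cite{2015-duval-focm}, adapted to the nonnegative Hilbertian setting: construct a candidate minimizer of the expected form by an implicit function argument, and then certify its optimality (and uniqueness) using the nondegeneracy of $\etaV$. The starting point is the first-order characterization: since~\eqref{blasso} is convex, $\tilde m$ is a minimizer if and only if $\tfrac{1}{\la}\Phi^*(y-\Phi\tilde m)\in\partial\lmass(\tilde m)$, which by~\eqref{eq:subdiffmass} means $\sup_\dompos\eta\le 1$ and $\eta=1$ $\tilde m$-a.e., where $\eta\eqdef\Phi^*p$ and $p\eqdef\tfrac1\la(y-\Phi\tilde m)$. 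Thus, seeking a solution $\tilde m=\sum_{i=1}^\M\tilde\amp_i\dirac_{\tilde\pos_i}$ with $\tilde\amp_i>0$ and $\tilde\pos_i\in\interop(\dompos)$, the conditions $\eta(\tilde\pos_i)=1$ and (as $\tilde\pos_i$ is then an interior maximizer of $\eta$) $\eta'(\tilde\pos_i)=0$ are necessary, and together with $\sup_\dompos\eta\le1$ they are sufficient.

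First I would eliminate $p$ from the equations $\eta(\tilde\pos_i)=1$, $\eta'(\tilde\pos_i)=0$, i.e. from $\Gamma_{\tilde\bpos}^*p=u$ with $u=(1,0,\dots,1,0)^T$ and $p=\tfrac1\la(\Phi\mo+w-\Phi_{\tilde\bpos}\tilde\amp)$, which yields the equation (now regular at $\la=0$)
\[
 H(\tilde\amp,\tilde\bpos,\la,w)\eqdef\Gamma_{\tilde\bpos}^*\bigl(\Phi\mo+w-\Phi_{\tilde\bpos}\tilde\amp\bigr)-\la u=0,
\]
a $\Cder{k-1}$ map near $(\amp,\bpos,0,0)$ valued in $\RR^{2\M}$. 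At that point the residual $\Phi\mo-\Phi_\bpos\amp$ vanishes, so $H=0$, and $\partial_{(\tilde\amp,\tilde\bpos)}H=-\Gamma_\bpos^*\Gamma_\bpos D$ with $D$ the invertible diagonal matrix carrying $1$'s on the amplitude slots and the $\amp_i$'s on the location slots; this is invertible because $\Gamma_\bpos$ has full rank. The implicit function theorem then provides $\Cder{k-1}$ (hence $\Cder1$) maps $(\la,w)\mapsto(\tilde\amp,\tilde\bpos)$ on a neighbourhood of $(0,0)$ with $(\tilde\amp,\tilde\bpos)(0,0)=(\amp,\bpos)$; for $\la$ and $\normH{w}$ small the $\tilde\amp_i$ remain positive and the $\tilde\pos_i$ remain distinct and interior, defining $\tilde m_{\la,w}$ with $\tilde m_{0,0}=\mo$.

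Next I would set $p_{\la,w}\eqdef\tfrac1\la(y-\Phi\tilde m_{\la,w})$ and $\eta_{\la,w}\eqdef\Phi^*p_{\la,w}$, so that $\eta_{\la,w}(\tilde\pos_i)=1$ and $\eta_{\la,w}'(\tilde\pos_i)=0$ by construction. Expanding $\Phi_{\tilde\bpos}\tilde\amp$ to first order around $(\amp,\bpos)$ and using the implicit function theorem gives $p_{\la,w}=\pV+\tfrac1\la(\mathrm{Id}-P)w+o(1)$ as $(\la,w)\to(0,0)$, where $P$ is the orthogonal projector onto $\mathrm{Im}\,\Gamma_\bpos$; under $\normH{w}\le\alpha\la$ this yields $\normH{p_{\la,w}-\pV}\le\alpha+o(1)$. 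Since $\varphi\in\kernel{k}$ with $k\ge2$, so that $\sup_\dompos\normH{\varphi^{(j)}}<\infty$ for $j\le2$, this upgrades to $\eta_{\la,w}\to\etaV$ in $\Cder2$ uniformly on $\dompos$ (uniformity up to infinity following from $\Phi^*p\in\CoX$). I then invoke the NDSC: pick $\rho,c>0$ with $\etaV''\le-c$ on $\bigcup_iB(\pos_i,\rho)$ and $\delta>0$ with $\etaV\le1-\delta$ off $\bigcup_iB(\pos_i,\rho)$ (continuity, strict inequalities, compactness/decay). For $\la,\alpha$ small enough $\eta_{\la,w}''\le-c/2$ on the balls, so a second-order Taylor expansion at $\tilde\pos_i$ gives $\eta_{\la,w}<1$ there away from $\tilde\pos_i$, while $\eta_{\la,w}\le1-\delta/2<1$ elsewhere. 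Hence $\eta_{\la,w}\in\partial\lmass(\tilde m_{\la,w})$ and $\tilde m_{\la,w}$ solves~\eqref{blasso}. Uniqueness follows since $\tfrac12\normH{\Phi m-y}^2$ is strictly convex in $\Phi m$: all minimizers share the same image, hence the same certificate $\eta_{\la,w}$, hence are supported in $\{\tilde\pos_i\}$ (the only points where $\eta_{\la,w}=1$), hence coincide with $\tilde m_{\la,w}$ by injectivity of $\Phi_{\tilde\bpos}$.

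For the converse, if for small $(\la,w)$ the solutions read $\sum_i\tilde\amp_i(\la,w)\dirac_{\tilde\pos_i(\la,w)}$ with $\tilde\amp_i,\tilde\pos_i$ of class $\Cder1$ and $(\tilde\amp,\tilde\bpos)(0,0)=(\amp,\bpos)$, then along $w=0$, $\la\to0$, the $\Cder1$ expansion of $p_{\la,0}=-\tfrac1\la(\Phi_{\tilde\bpos}\tilde\amp-\Phi_\bpos\amp)$ forces $p_{\la,0}\to p^\star\in\mathrm{Im}\,\Gamma_\bpos$, so $\eta_{\la,0}\to\eta^\star=\Phi^*p^\star$ in $\Cder1$; passing to the limit in $\eta_{\la,0}(\tilde\pos_i(\la))=1$, $\eta_{\la,0}'(\tilde\pos_i(\la))=0$ and $\eta_{\la,0}\le1$ identifies $\eta^\star$ with a function of the form~\eqref{eq:etaVform} meeting the interpolation conditions, hence $\eta^\star=\etaV$ by uniqueness, whence $\etaV\le1$. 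The main obstacle, I expect, is twofold: first the reparametrization that removes the $1/\la$ singularity so the implicit function theorem reaches $\la=0$; and, more importantly, upgrading $\eta_{\la,w}\to\etaV$ from pointwise to $\Cder2$ convergence, which is exactly what turns the pointwise strict inequalities of the NDSC into the uniform bound $\eta_{\la,w}\le1$ — the control of the second derivatives near the $\pos_i$'s being the reason $\etaV''(\pos_i)<0$ is required.
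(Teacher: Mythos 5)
Your proposal is correct and follows essentially the same route as the proof this paper relies on (the theorem is quoted from \cite{2015-duval-focm} and stated here without proof): extended first-order optimality conditions, an implicit-function-theorem construction of $(\tilde{\amp},\tilde{\bpos})$ after removing the $1/\la$ singularity, $\Cder{2}$-closeness of the low-noise certificate to $\etaV$ for $\alpha$ small, transfer of the non-degeneracy to certify optimality and uniqueness, and the analogous limiting argument for the converse. I see no essential gap.
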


Unfortunately, the Non Degenerate Source Condition (NDSC) is not trivial to ensure a priori. One reason is that it should be checked for any configuration $\pos_1<\ldots <\pos_\M$. Let us mention~\cite{tang2013atomic} which proves that the (NDSC) holds for the Gaussian or Cauchy kernels and signed measures, provided the spikes are sufficiently separated. As noted above in the introduction, in the signed case, the (NDSC) generally cannot hold if spikes with opposite signs are too close (see~\cite[Corollary 1]{2015-duval-focm} and~\cite{tang2015resolution}).

The situation is more favorable in the setting of nonnegative measures that we consider. In Section~\ref{sec:tsystem}, building upon~\cite{schiebinger2015superresolutionjournal}, we show that the (NDSC) holds for specific choices of $\phi$ which include the Laplace kernel. 

\paragraph{The case of clustered spikes}
Another approach for nonnegative measures, taken in~\cite{2017-denoyelle-jafa}, is to consider spikes which are located in a small neighborhood of a single point $\pos_0\in \interop(\dompos)$, say $\pos_i= \pos_0 + tz_{i}$, for some $z_{i}\in \RR$ and some small $t>0$.
Let us write $m_t=\sum_{i=1}^\M a_i\delta_{x_0+tz_i}$ and $y_t=\Phi m_t+w$. One may prove that, as $t\to 0^+$, the corresponding $\pV$ converges (strongly in $\Hh$) towards 
\begin{align}
  \label{eq:defpW}
 \pW=\argmin\enscond{\normH{\pW}  }{(\Phi^*\pW)(x_0)=1, (\Phi^*\pW)'(x_0)=\ldots =(\Phi^*\pW)^{(2\M-1)}(x_0)=0}
\end{align}
provided $\varphi\in\kernel{2\M-1}$ and $(\phiD{0},\ldots,\phiD{2\M-1})$ has full rank.
As a result $\etaV=\Phi^*\pV$ converges (uniformly on $\dompos$) towards $\etaW\eqdef \Phi^*\pW$, and the same holds for the derivatives up to the regularity of $\varphi$. 
Letting $\Fdn\eqdef(\phiD{0},\ldots,\phiD{2\M-1})$, we note as above that $\pW\eqdef\Fdn^{+,*}v$ where $v\eqdef (1,0,\ldots,0)^T\in \RR^{2\M}$.
Moreover, $\etaW$ is the unique the function of the form 
\begin{equation*}
  \etaW: x\mapsto \sum_{k=0}^{2\M-1}\beta_k\dotpH{\varphi(x)}{\varphi^{(k)}(x_0)},
\end{equation*}
which satisfies 
\begin{equation}
  \label{eq:vanishing}
  \etaW(x_0)=1,\ \etaW'(x_0)=0,\ \ldots,\ \etaW^{(2\M-1)}(x_0)=0.
\end{equation}

\begin{definition}[$(2\M-1)$-Nondegeneracy]
We say that a point $\x_0$ is \textit{$(2\M-1)$ Non Degenerate} if 
\begin{itemize}
  \item $\varphi\in\kernel{2\M}$,
  \item $\Fdn\eqdef(\phiD{0},\ldots,\phiD{2\M-1})$ has full rank,
  \item $\etaW^{(2\M)}(\x_0)<0$,
  \item for all $\pos\in \dompos\setminus\{\pos_0\}$, $\etaW(\pos_0)<1$.
\end{itemize}
\end{definition}

The $(2\M-1)$-Nondegeneracy of a point entails the support stability of finite measures which are clustered around $\x_0$.
\begin{theorem}[\cite{2017-denoyelle-jafa}]\label{thm-denoyelle}
  Assume that $\varphi\in \kernel{2\M+1}$ and that the point is $(2\M-1)$ Non Degenerate. Then there exists $t_0>0$, such that  $\m_t$ satisfies the (NDSC) for for all $0<t<t_0$.
  
  More precisely, there exists $\alpha>0$, $C_R>0$ and $C>0$ such that for $0<t<t_0$ and all $0<\la+\normH{w}<C_Rt^{2\M-1}$ with $\normH{w}\leq \alpha\la$,
\begin{itemize}
  \item the problem $\blasso{y_t}$ has a unique solution of the form $\sum_{i=1}^{\M}\tilde{\amp_i}(\la,w,t)\delta_{x_0+t\tilde{z}_i(\la,w,t)}$,
  \item the mapping $g_t:(\la,w)\mapsto (\tilde{\amp_i},\tilde{z}_i)$ is the restriction of a $\Cder{2\M}$ function,
  \item the following inequality holds
\begin{align*}
  \abs{(\tilde{\amp},\tilde{z})-(\amp,z)}_\infty\leq C\left(\frac{\abs{\la}}{t^{2\M-1}} +\frac{\normH{w}}{t^{2\M-1}} \right).
\end{align*}
\end{itemize}
\end{theorem}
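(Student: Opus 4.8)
The plan is to argue in two stages. First I would show that for $t$ small enough the clustered measure $\mo=m_t$ already satisfies the Non Degenerate Source Condition, so that Theorem~\ref{thm-duval} applies and yields \emph{some} support-stable regime; the substance of the statement, namely that this regime has size $\sim t^{2\M-1}$ and that the support/amplitude map has Lipschitz constant $\sim t^{-(2\M-1)}$, is then obtained by revisiting the implicit-function argument behind Theorem~\ref{thm-duval} after rescaling around $\pos_0$ and carefully tracking the powers of $t$.

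\textbf{Stage 1: the (NDSC) holds for $m_t$.}
As recalled just before the statement, $\pV\to\pW$ strongly in $\Hh$ as $t\to0^+$ (here $\pV$ and $\etaV$ denote the precertificate associated to $m_t$), and since the maps $p\mapsto\dotpH{p}{\varphi^{(j)}(\cdot)}$ are bounded from $\Hh$ to $\CoX$ for $0\le j\le 2\M+1$ (using $\varphi\in\kernel{2\M+1}$), one gets $\etaV^{(j)}\to\etaW^{(j)}$ uniformly on $\dompos$ for all such $j$. Fix a small $\rho>0$. On $\{\pos\in\dompos:\abs{\pos-\pos_0}\ge\rho\}$ the function $\etaW$ is continuous, vanishes at the ends of $\dompos$, and is $<1$ everywhere by the $(2\M-1)$-nondegeneracy, so its supremum there is $1-\delta$ for some $\delta>0$; uniform convergence then gives $\etaV<1$ on that region for $t$ small. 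Near the cluster one rescales: set $F_t(s)\eqdef\etaV(\pos_0+ts)$. The technical core of this stage is that
\begin{align}\label{eq:ddp-rescaled}
  \frac{1-F_t(s)}{t^{2\M}}\ \xrightarrow[\ t\to0^+\ ]{}\ \kappa\prod_{i=1}^{\M}(s-z_i)^2,\qquad \kappa\eqdef \frac{-\etaW^{(2\M)}(\pos_0)}{(2\M)!}>0,
\end{align}
locally uniformly in $s$, together with its first two derivatives. This is proved by passing to a renormalized version of $\Phi$ restricted to the cluster --- replacing the nearly collinear family $\bigl\{\varphi(\pos_0+tz_i),\varphi'(\pos_0+tz_i)\bigr\}_i$ by a divided-difference basis whose span converges to $\mathrm{range}(\Fdn)$, which is $2\M$-dimensional by hypothesis --- under which the rescaled $\etaV$ converges to the unique degree-$2\M$ function vanishing to second order at each $z_i$; since $1-F_t$ already has double zeros at the $z_i$ (because $\etaV(\pos_0+tz_i)=1$ and $\etaV'(\pos_0+tz_i)=0$), that limit must be $\kappa\prod_i(s-z_i)^2$, and the constant $\kappa$ is pinned down by the top Taylor coefficient, i.e. by $\etaW^{(2\M)}(\pos_0)<0$. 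As the right-hand side of~\eqref{eq:ddp-rescaled} is $>0$ off $\{z_i\}$ and has strictly positive second derivative at each $z_i$, one concludes for $t$ small that $\etaV(\pos_0+ts)<1$ for bounded $s\notin\{z_i\}$ and $\etaV''(\pos_0+tz_i)=t^{-2}F_t''(z_i)<0$. The same divided-difference argument shows $\Gamma_{\bpos_t}$ has full rank for $t$ small, where $\bpos_t\eqdef(\pos_0+tz_1,\ldots,\pos_0+tz_\M)$. This establishes the (NDSC) for $m_t$ for $0<t<t_0$.

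\textbf{Stage 2: quantitative stability.}
Any solution $\sum_i\tilde{\amp}_i\dirac_{\tilde{\pos}_i}$ of~\eqref{blasso} with positive amplitudes and interior support is characterized, through the first-order optimality conditions ($\Phi^*p$ equals $1$ with vanishing derivative at each $\tilde{\pos}_i$, for $p=\tfrac1\la(y_t-\Phi\tilde m)$), by the square system
\begin{align}\label{eq:ddp-optsys}
  \Gamma_{\tilde{\bpos}}^*\bigl(\Phi_{\tilde{\bpos}}\tilde{\amp}-y_t\bigr)+\la u=0,\qquad u=(1,0,\ldots,1,0)^T\in\RR^{2\M},
\end{align}
which holds at $(\tilde{\amp},\tilde{\bpos},\la,w)=(\amp,\bpos_t,0,0)$ since $y_t=\Phi_{\bpos_t}\amp$ there. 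Writing $\tilde{\pos}_i=\pos_0+t\tilde z_i$ and left-multiplying~\eqref{eq:ddp-optsys} by a $t$-dependent renormalization $N_t$ (the inverse of the divided-difference basis change, carrying factors $t^{-k}$ for $0\le k\le 2\M-1$), the Jacobian of the renormalized system in $(\tilde{\amp},\tilde z)$ converges as $t\to0^+$ to an invertible limit built from $\Fdn^*\Fdn$ and $\mathrm{diag}(\amp)$, so its inverse is bounded uniformly in $t$; meanwhile the derivatives of the renormalized system in $(\la,w)$, namely $N_tu$ and $-N_t\Gamma_{\tilde{\bpos}}^*$, have norm of order $t^{-(2\M-1)}$, the worst factor coming from the divided difference of order $2\M-1$. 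A quantitative implicit function theorem then produces, for $\abs{\la}+\normH{w}<C_Rt^{2\M-1}$, a solution map $(\la,w)\mapsto(\tilde{\amp},\tilde z)$ that is the restriction of a $\Cder{2\M}$ function (the extra derivative in the hypothesis $\varphi\in\kernel{2\M+1}$, beyond the naive $\kernel{2\M}$, is used to control the $t$-dependent remainder uniformly), with Lipschitz constant $O(t^{-(2\M-1)})$, which gives the announced estimate. Uniqueness of this solution follows, as in Theorem~\ref{thm-duval}, from exhibiting a valid dual certificate: the associated $\eta=\Phi^*p$ is a perturbation of $\etaV$, and a careful analysis at the natural scale $t$ shows that the strict inequalities of the (NDSC) for $m_t$ are preserved under this perturbation precisely in the regime $\abs{\la}+\normH{w}<C_Rt^{2\M-1}$ with $\normH{w}\le\alpha\la$; the non-degenerate Hessian of the objective at $\tilde m$ then rules out any other solution.

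\textbf{Main obstacle.}
The delicate part throughout is the \emph{uniform-in-$t$ bookkeeping of the powers of $t$}: in Stage~1, establishing the rescaled limit~\eqref{eq:ddp-rescaled} of $1-\etaV(\pos_0+t\,\cdot\,)$ and of its first two derivatives; in Stage~2, showing that the renormalized Jacobian stays invertible with a uniformly bounded inverse while the data-dependence blows up at exactly rate $t^{-(2\M-1)}$, and --- most subtly --- that the strict inequalities of the (NDSC) for $m_t$, which near the cluster are only of order $t^{2\M}$, are nonetheless preserved for the perturbed certificate in the claimed regime; the gain making this possible is that the certificate perturbation is itself controlled in a $t$-weighted norm matching the scaling of those inequalities, so that the two effects compare correctly only once $\abs{\la}+\normH{w}\lesssim t^{2\M-1}$. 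Everything else is a careful but routine adaptation of the proof of Theorem~\ref{thm-duval}.
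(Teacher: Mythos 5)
This theorem is imported from~\cite{2017-denoyelle-jafa} and stated in the present paper without proof (see the beginning of Section~\ref{sec:blasso}), so there is no in-paper argument to compare against; judged against the cited work, your two-stage sketch does follow essentially its strategy: convergence of $\pV$ to $\pW$ (hence of $\etaV$ and its derivatives to those of $\etaW$), a rescaled analysis at scale $t$ around $\pos_0$ to obtain the (NDSC) for $m_t$, and then an implicit-function argument on the optimality system $\Gamma_{\tilde{\bpos}}^*(\Phi_{\tilde{\bpos}}\tilde a-y_t)+\la u=0$, preconditioned by a divided-difference renormalization whose $t^{-k}$ factors produce both the invertible limiting Jacobian built from $\Fdn^*\Fdn$ and the $t^{-(2\M-1)}$ sensitivity, hence the admissible regime $\la+\normH{w}\lesssim t^{2\M-1}$. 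Your rescaled limit $(1-\etaV(\pos_0+ts))/t^{2\M}\to \frac{-\etaW^{(2\M)}(\pos_0)}{(2\M)!}\prod_i(s-z_i)^2$ is correct, and is in fact most directly obtained from the Hermite interpolation remainder, since $1-\etaV$ has double zeros at the nodes $\pos_0+tz_i$.

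There is, however, a genuine gap as written in Stage 1: your two regimes are $\abs{\pos-\pos_0}\ge\rho$ (fixed $\rho$, using $\sup\etaW\le 1-\delta$ there plus uniform convergence) and $\abs{\pos-\pos_0}\le Rt$ (bounded $s$, using the locally uniform rescaled limit), which leaves the intermediate region $Rt\le\abs{\pos-\pos_0}\le\rho$ uncovered; there $1-\etaW$ is only of size $\abs{\pos-\pos_0}^{2\M}$, so it is not beaten by a uniform $o(1)$ (or even $O(t)$) bound on $\etaV-\etaW$, and no choice of $R$ and $\rho$ closes the gap by a limiting argument alone. The fix uses the ingredient you already invoke, but on a fixed neighborhood rather than at scale $t$: for every $\pos$ with $\abs{\pos-\pos_0}\le\rho$ one has $1-\etaV(\pos)=\frac{-\etaV^{(2\M)}(\xi)}{(2\M)!}\prod_{i=1}^{\M}(\pos-\pos_0-tz_i)^2$ for some $\xi$ in the convex hull of the nodes and $\pos$; choosing $\rho$ so that $\etaW^{(2\M)}<-c$ on $[\pos_0-\rho,\pos_0+\rho]$ and $t$ small so that $\etaV^{(2\M)}<-c/2$ there (uniform convergence of the $2\M$-th derivative, which is where the regularity beyond $\kernel{2}$ enters), one gets $\etaV<1$ on the whole punctured neighborhood and, via the same divided-difference representation, $\etaV''(\pos_0+tz_i)<0$, simultaneously. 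A second, smaller point: uniqueness should not be attributed to a ``non-degenerate Hessian of the objective'' (the functional on $\radon$ is not strictly convex); as in Theorem~\ref{thm-duval}, it follows from the strict inequalities satisfied by the certificate associated with the constructed candidate (value $<1$ away from $\{\tilde\pos_i\}$), which force any solution to be supported on $\{\tilde\pos_i\}$, combined with the injectivity of $\Phi_{\tilde{\bpos}}$.
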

 Theorem~\ref{thm-denoyelle} is more quantitative than Theorem~\ref{thm-duval}, as it gives the scaling of the constants with respect to the spacing $t$ of the spikes, describing how it becomes ill-conditioned.

\subsection{Ensuring negative second derivatives}
In general, the conditions $\etaV''(\pos_i)<0$ and $\etaW^{(2\M)}(\x_0)<0$ in the non degeneracy hypotheses are difficult to ensure a priori. They can be checked numerically, or, in the case of $\etaW$, analytically on a closed form expression. But it is sometimes possible to ensure them a priori: in~\cite[Proposition 5]{2017-denoyelle-jafa}, we have proved that $\etaW^{(2\M)}(\x_0)<0$ for fully sampled convolutions on translation-invariant domains, under a mild assumption. 

In the present paper we propose the following two new Lemmas, which work for any sampling of the impulse response. They follow directly from~\eqref{eq:etaVform} and the characterization of $\etaV$, in the spirit of Cramer's rule.

\begin{lemma}\label{lem:etaVcramer}
  Assume that $\Gamma_{\bpos}$ has full rank. Then for all $i\in\{1,\ldots,\M\}$,
  \begin{align}
    \label{eq:etaVcramer}
    \begin{mydet}
    \dotpH{\phi(\pos_1)}{\phi(\pos_1)} & \dotpH{\phi(\pos_1)}{\phi'(\pos_1)} & \cdots &  \dotpH{\phi(\pos_1)}{\phi'(\pos_\M)} & 1\\
    \dotpH{\phi'(\pos_1)}{\phi(\pos_1)} & \dotpH{\phi'(\pos_1)}{\phi'(\pos_1)} & \cdots &  \dotpH{\phi'(\pos_1)}{\phi'(\pos_\M)} & 0\\
    \vdots & \vdots &  &\vdots & \vdots\\
    \dotpH{\phi(\pos_\M)}{\phi(\pos_1)} & \dotpH{\phi(\pos_\M)}{\phi'(\pos_1)} & \cdots &  \dotpH{\phi(\pos_\M)}{\phi'(\pos_\M)} & 1\\
    \dotpH{\phi'(\pos_\M)}{\phi(\pos_1)} & \dotpH{\phi'(\pos_\M)}{\phi'(\pos_1)} & \cdots &  \dotpH{\phi'(\pos_\M)}{\phi'(\pos_\M)} & 0\\
      \dotpH{\phi''(\pos_{i})}{\phi(\pos_1)} & \dotpH{\phi''(\pos_{i})}{\phi'(\pos_1)} & \cdots &  \dotpH{\phi''(\pos_{i})}{\phi'(\pos_\M)} & 0
    \end{mydet}= -\etaV''(\pos_{i})\det(\Gamma_{\bpos}^*\Gamma_{\bpos}).
  \end{align}
\end{lemma}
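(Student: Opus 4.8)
The plan is to turn the statement into a finite-dimensional identity about the Gram matrix $\Gamma_{\bpos}^*\Gamma_{\bpos}$ and then recognize the left-hand side as a block (Schur-complement) determinant. First I would use the structure of $\pV$ recalled just after~\eqref{eq:defpv}: one has $\pV=\Gamma_{\bpos}c$ for some $c=(\alpha_1,\beta_1,\ldots,\alpha_\M,\beta_\M)^T\in\RR^{2\M}$, and the interpolation conditions $\etaV(\pos_i)=1$, $\etaV'(\pos_i)=0$ for all $i$ are exactly $\Gamma_{\bpos}^*\pV=u$ with $u=(1,0,\ldots,1,0)^T$. Applying $\Gamma_{\bpos}^*$ to $\pV=\Gamma_{\bpos}c$ and using $\Gamma_{\bpos}^*\pV=u$ gives $(\Gamma_{\bpos}^*\Gamma_{\bpos})c=u$, hence $c=(\Gamma_{\bpos}^*\Gamma_{\bpos})^{-1}u$ since $\Gamma_{\bpos}$ has full column rank.

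Next, since $\varphi\in\kernel{k}$ with $k\geq 2$, differentiating $\etaV=\Phi^*\pV$ twice at $\pos_i$ gives $\etaV''(\pos_i)=\dotpH{\varphi''(\pos_i)}{\pV}=\dotpH{\varphi''(\pos_i)}{\Gamma_{\bpos}c}=w_i^Tc$, where $w_i\eqdef\Gamma_{\bpos}^*\varphi''(\pos_i)\in\RR^{2\M}$ is the vector whose entries are $\dotpH{\varphi''(\pos_i)}{\varphi(\pos_j)}$ and $\dotpH{\varphi''(\pos_i)}{\varphi'(\pos_j)}$ for $1\leq j\leq\M$ — precisely the first $2\M$ entries of the last row of the determinant in~\eqref{eq:etaVcramer}. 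Combining the two steps, $\etaV''(\pos_i)=w_i^T(\Gamma_{\bpos}^*\Gamma_{\bpos})^{-1}u$.

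It then remains to identify the matrix appearing in~\eqref{eq:etaVcramer}. Ordering the $2\M$ generators as $\varphi(\pos_1),\varphi'(\pos_1),\ldots,\varphi(\pos_\M),\varphi'(\pos_\M)$, its upper-left $2\M\times 2\M$ block is the Gram matrix $\Gamma_{\bpos}^*\Gamma_{\bpos}$, its last column is $\binom{u}{0}$, and its last row is $(w_i^T,0)$; that is, it equals $\left(\begin{smallmatrix}\Gamma_{\bpos}^*\Gamma_{\bpos} & u\\ w_i^T & 0\end{smallmatrix}\right)$. Since $\Gamma_{\bpos}^*\Gamma_{\bpos}$ is invertible, the block-determinant (Schur complement) formula yields
$\det\left(\begin{smallmatrix}\Gamma_{\bpos}^*\Gamma_{\bpos} & u\\ w_i^T & 0\end{smallmatrix}\right)=\det(\Gamma_{\bpos}^*\Gamma_{\bpos})\cdot\bigl(0-w_i^T(\Gamma_{\bpos}^*\Gamma_{\bpos})^{-1}u\bigr)=-\etaV''(\pos_i)\,\det(\Gamma_{\bpos}^*\Gamma_{\bpos})$,
which is the claim. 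Equivalently — and this is the announced ``Cramer's rule'' flavor — one may expand the determinant along its last column and invoke Cramer's rule for the system $(\Gamma_{\bpos}^*\Gamma_{\bpos})c=u$, which produces $\sum_j (w_i)_j\det(G_j)=\etaV''(\pos_i)\det(\Gamma_{\bpos}^*\Gamma_{\bpos})$ (with $G_j$ the Gram matrix with its $j$-th column replaced by $u$), after tracking the signs coming from moving the replaced column into place. There is no genuine difficulty here; the only points requiring care are the sign in the Schur-complement formula and checking that the row/column labelling of the displayed matrix matches the chosen ordering of the generators — reordering both rows and columns by the same permutation leaves the determinant unchanged anyway.
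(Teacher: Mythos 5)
Your proof is correct and follows essentially the route the paper intends (the paper only sketches it, saying the lemma follows from the characterization of $\etaV$ ``in the spirit of Cramer's rule''): you reduce to the normal equations $(\Gamma_{\bpos}^*\Gamma_{\bpos})c=u$, note $\etaV''(\pos_i)=w_i^Tc$ with $w_i=\Gamma_{\bpos}^*\varphi''(\pos_i)$, and evaluate the bordered determinant by the Schur-complement (equivalently Cramer) formula, with the sign and the ordering of the generators handled correctly.
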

Ensuring that the left hand side is positive provides the condition $\etaV''(\pos_\M)<0$, and a similar argument can be derived for the other $\pos_i$'s.

The analogous property for $\etaW$ is the following.
\begin{lemma}\label{lem:blassocramer}
  Assume that $\Fdn$ has full column rank. 
  
  Then,   \begin{align}\label{eq:detalterne}
    \begin{mydet}
      \dotpH{\phiD{0}}{\phiD{0}}&   \dotpH{\phiD{0}}{\phiD{1}} &\cdots &\dotpH{\phiD{0}}{\phiD{2\M-1}} & 1\\
      \dotpH{\phiD{1}}{\phiD{0}}&   \dotpH{\phiD{1}}{\phiD{1}} &\cdots &\dotpH{\phiD{1}}{\phiD{2\M-1}} & 0\\
      \vdots &\vdots & & \vdots&\vdots\\
      \dotpH{\phiD{2\M}}{\phiD{0}}& \dotpH{\phiD{2\M}}{\phiD{1}} &\cdots &\dotpH{\phiD{2\M}}{\phiD{2\M-1}} &0
    \end{mydet}&= -\etaW^{(2\M)}(\pos_0)\det(\Fdn^*\Fdn).
  \end{align}
\end{lemma}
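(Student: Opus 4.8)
The plan is to recognize the left-hand side of~\eqref{eq:detalterne} as the determinant of a Gram matrix bordered by the vector $v\eqdef (1,0,\ldots,0)^T\in\RR^{2\M}$ and by one extra row of inner products with $\phiD{2\M}$, and then to evaluate that determinant with the Schur complement formula, exactly in the spirit of Cramer's rule as for Lemma~\ref{lem:etaVcramer}. First I would recall, from the discussion preceding the statement, that since $\Fdn$ has full column rank one has $\pW=\Fdn^{+,*}v=\Fdn(\Fdn^*\Fdn)^{-1}v$, hence $\pW=\sum_{k=0}^{2\M-1}\beta_k\phiD{k}$ with $\beta\eqdef(\Fdn^*\Fdn)^{-1}v$. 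Because $\varphi\in\kernel{2\M}$, the $\Hh$-valued map $x\mapsto\varphi(x)$ is $\Cder{2\M}$, so $\etaW(x)=\dotpH{\varphi(x)}{\pW}$ may be differentiated up to order $2\M$ by differentiating under the inner product, which gives $\etaW^{(k)}(\pos_0)=\dotpH{\phiD{k}}{\pW}=\sum_{j=0}^{2\M-1}\beta_j\dotpH{\phiD{k}}{\phiD{j}}$ for $0\le k\le 2\M$. (For $0\le k\le 2\M-1$ this recovers the vanishing conditions~\eqref{eq:vanishing} as the linear system $\Fdn^*\Fdn\,\beta=v$, consistent with the definition of $\beta$.)

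Next, writing $G\eqdef\Fdn^*\Fdn$ for the symmetric positive definite Gram matrix with entries $\dotpH{\phiD{i}}{\phiD{j}}$, $0\le i,j\le 2\M-1$, and $g\eqdef\Fdn^*\phiD{2\M}=(\dotpH{\phiD{2\M}}{\phiD{j}})_{0\le j\le 2\M-1}$, the identity above at $k=2\M$ reads $\etaW^{(2\M)}(\pos_0)=g^T\beta=g^TG^{-1}v$. On the other hand, inspecting the array in~\eqref{eq:detalterne}: its upper-left $2\M\times 2\M$ block is exactly $G$, its last column restricted to the first $2\M$ rows is $v$, its last row restricted to the first $2\M$ columns is $g^T$, and its bottom-right entry is $0$. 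Hence the left-hand side of~\eqref{eq:detalterne} equals
\begin{align*}
  \det\begin{pmatrix} G & v \\ g^T & 0\end{pmatrix}=\det(G)\bigl(0-g^TG^{-1}v\bigr)=-\det(\Fdn^*\Fdn)\,\etaW^{(2\M)}(\pos_0),
\end{align*}
by the Schur complement formula (legitimate since $G$ is invertible), which is precisely the claimed equality.

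I do not expect a genuine obstacle: the two points deserving a line of care are the index bookkeeping needed to see that the bordered blocks are the $G$, $v$, $g^T$ above, and the routine justification that $\etaW$ is $2\M$ times differentiable with $\etaW^{(k)}(\pos_0)=\dotpH{\phiD{k}}{\pW}$, which is exactly what $\varphi\in\kernel{2\M}$ provides. As an alternative to the Schur complement one may expand $\det M$ along its last row and identify $g^TG^{-1}v$ through Cramer's rule, mirroring the computation behind Lemma~\ref{lem:etaVcramer}.
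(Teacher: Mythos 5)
Your proof is correct: the bordered Gram determinant identification, the formula $\etaW^{(2\M)}(\pos_0)=g^TG^{-1}v$ via $\pW=\Fdn(\Fdn^*\Fdn)^{-1}v$, and the Schur complement (or last-row expansion with Cramer's rule) give exactly the claimed identity. This is essentially the argument the paper intends, since it states the lemma as following directly from the characterization of $\etaW$ ``in the spirit of Cramer's rule'' without giving further detail.
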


\subsection{Reconstruction frameworks}
\label{sec:blassoreconstruction}

Now, we discuss the choice of the Hilbert space and kernel $\varphi$ . 

\begin{rem}
Let us note that~\eqref{blasso} may be written in terms of the autocorrelation function 
\eq{
	\foralls (\pos,\pos') \in \dompos^2, \quad
	\Co(\pos,\pos') \eqdef \dotpH{ \phi(x) }{ \phi(x') }.
}
Introducing the autocorrelation operator 
\begin{align*}
  \Cop \eqdef \Phi^*\Phi \colon \radon&\rightarrow \CoX\\
                               m  &\longmapsto \pa{ \int_{\dompos} \Co(\cdot,\pos') \d m(\pos') }, 
\end{align*}
we note that~\eqref{blasso} is equivalent to
\eq{
	\frac{1}{2}\normH{y}^2 + 
  \umin{m\in\radonpos}\pa{ \la m(\dompos)+ \frac{1}{2} \dotp{\Cop \m}{\m} - \dotp{\Phi^*y}{\m}}\tag{$\tilde\Pp{y}$}
}
(here $\dotp{\cdot}{\cdot}$ denotes the duality pairing between $\CoX$ and $\radon$).
As a result, everything described above (in particular expression for certificates) can be written in term of the autocorrelation function and its derivatives. Moreover, choosing $\varphi$ and $\Hh$ is equivalent (but less intuitive) to choosing the autocorrelation function $\Co$.
\end{rem}

A typical choice of Hilbert space covered by our framework is the following. Given some positive measure $\Ps$ on some interval $\domobs\subseteq \RR$, the impulse response is 
\begin{align*}
  \varphi(x): \s \mapsto \psi(\pos,\s)
\end{align*}
where $\psi$ is some kernel and $\Hh=\Ldeux(\domobs,\Ps)$. We say that $(\domobs,\Ps)$ define a reconstruction framework. 

For instance, if $\Ps=\sum_{k=1}^K \delta_{\s_k}$ and $\psi(\pos,\s)=e^{-(\pos-\s)^2}$, our observation is the sampled convolution of $m_0$ with the Gaussian kernel,
\begin{align*}
  (\Phi m_0)_k = \int_{\dompos} e^{-(x-\s_k)}\d \m_0(x),
\end{align*}
and the norm of $\Hh$ is the standard Euclidean norm in $\RR^K$.

In the following, we usually assume that $\psi$ is smooth in its first argument, and that it is possible to differentiate under the sum symbol,
\begin{align}\label{eq:derivint}
  \varphi^{(k)}(\pos)=(\partial_1)^{k} \psi(x,\cdot)\in \Ldeux(\domobs,\Ps).
\end{align}

% !TEX root = ../SiimsLaplace.tex

\section{A characterization of the Non-Degenerate Source Condition}
\label{sec:tsystem}
In this section we provide a simple criterion which ensures that a measure $\mo$ is the unique solution to the basis pursuit for measures~\eqref{eq:bpursuit}, and that its recovery using~\eqref{blasso} is stable. This criterion is an "infinitesimal version" of the criterion proposed in~\cite{schiebinger2015superresolutionjournal}, except that we do not impose any normalisation of the atoms (nor any weight on the total variation).
Throughout this section we let $\{\pos_i\}_{i=1}^\M\subset \interop(\dompos)$ with $\pos_1<\ldots<\pos_\M$, and $\pos_0\in\interop(\dompos)$.

\subsection{Rescaled determinants}
The following elementary result is a specialization of a standard trick in the study of \textit{extended T-systems}~\cite{karlin1966tchebycheff}. For sufficiently regular functions $\{u_0,\ldots, u_{2\M}\}$, define $\detV$ and $\detW$ by
    \begin{align}
      \forall t\in \dompos\setminus\{\pos_i\}_{i=1}^{\M},\quad      \detV(t)&\eqdef \frac{2}{\prod_{i=1}^\M (t-\pos_i)^2} \begin{mydet}
        u_0(t) & u_1(t) & \cdots &u_{2\M}(t)\\
        u_0(\pos_1) & u_1(\pos_1) & \cdots &u_{2\M}(\pos_1)\\
        u_0'(\pos_1) & u_1'(\pos_1) & \cdots &u_{2\M}'(\pos_1)\\
        \vdots & \vdots & & \vdots\\
        u_0(\pos_{\M}) & u_1(\pos_{\M}) & \cdots &u_{2\M}(\pos_{\M})\\
        u_0'(\pos_{\M}) & u_1'(\pos_{\M}) & \cdots &u_{2\M}'(\pos_{\M})
    \end{mydet},\label{eq:detV}\\
      \forall t\in \dompos\setminus\{\pos_0\},\quad      \detW(t)&\eqdef \frac{(2\M)!}{(t-\pos_0)^{2\M}} \begin{mydet}
        u_0(t) & u_1(t) & \cdots &u_{2\M}(t)\\
        u_0(\pos_1) & u_1(\pos_1) & \cdots &u_{2\M}(\pos_1)\\
        u_0'(\pos_1) & u_1'(\pos_1) & \cdots &u_{2\M}'(\pos_1)\\
        \vdots & \vdots & & \vdots\\
      u_0^{(2\M-1)}(\pos_{\M}) & u_1^{(2\M-1)}(\pos_{\M}) & \cdots &u_{2\M}^{(2\M-1)}(\pos_{\M})
  \end{mydet}.\label{eq:detW}
    \end{align}

  \begin{lemma}\label{lem:vandermonde}
If $\{u_0,\ldots, u_{2\M}\}\subset\Cder{2}(\dompos)$, then $\detV$ has a continuous extension to $\dompos$, with 
\begin{align}\label{eq:Dxi}
      \detV(\pos_i)\eqdef  \begin{vmatrix}
        u_0(\pos_1) & u_1(\pos_1) & \cdots &u_{2\M}(\pos_1)\\
        u_0'(\pos_1) & u_1'(\pos_1) & \cdots &u_{2\M}'(\pos_1)\\
        \vdots & \vdots & & \vdots\\
        u_0(\pos_i) & u_1(\pos_i) & \cdots &u_{2\M}(\pos_i)\\
        u_0'(\pos_i) & u_1'(\pos_i) & \cdots &u_{2\M}'(\pos_i)\\
        u_0''(\pos_i) & u_1''(\pos_i) & \cdots &u_{2\M}''(\pos_i)\\
        \vdots & \vdots & & \vdots\\
        u_0(\pos_{\M}) & u_1(\pos_{\M}) & \cdots &u_{2\M}(\pos_{\M})\\
        u_0'(\pos_{\M}) & u_1'(\pos_{\M}) & \cdots &u_{2\M}'(\pos_{\M})
      \end{vmatrix}.
    \end{align}
Similarly, if $\{u_0,\ldots, u_{2\M}\}\subset\Cder{2\M}(\dompos)$,
then $E$ has a continuous extension to $\dompos$, with 
    \begin{align}
      \detW(\pos_0)\eqdef \begin{vmatrix}
        u_0(\pos_1) & u_1(\pos_1) & \cdots &u_{2\M}(\pos_1)\\
        u_0'(\pos_1) & u_1'(\pos_1) & \cdots &u_{2\M}'(\pos_1)\\
        \vdots & \vdots & & \vdots\\
      u_0^{(2\M)}(\pos_{\M}) & u_1^{(2\M)}(\pos_{\M}) & \cdots &u_{2\M}^{(2\M)}(\pos_{\M})
      \end{vmatrix}.
    \end{align}
\end{lemma}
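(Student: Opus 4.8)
The plan is to freeze all arguments except one, treat the numerator of $\detV$ as a function $N(t)$ of that single variable $t$, show that $N$ vanishes to order at least $2$ at each $\pos_i$, divide out $\prod_{i=1}^\M(t-\pos_i)^2$ to get a continuous function, and read off the value at each $\pos_i$ from a Taylor expansion. The statement for $\detW$ follows from the same scheme, with the single point $\pos_0$, of ``multiplicity'' $2\M$, replacing the $\M$ points of multiplicity $2$.

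First I would note that only the first row of $N(t)$ depends on $t$, so by multilinearity of the determinant $N^{(\ell)}(t)$ is obtained from $N(t)$ by replacing that first row with $(u_0^{(\ell)}(t),\dots,u_{2\M}^{(\ell)}(t))$; in particular $N\in\Cder{2}(\dompos)$ since $\{u_j\}\subset\Cder{2}$. Evaluating at $t=\pos_i$ with $\ell=0$ and $\ell=1$, the first row becomes one of the rows already present in the block attached to $\pos_i$ (namely $(u_j(\pos_i))_j$, resp.\ $(u_j'(\pos_i))_j$), so $N(\pos_i)=N'(\pos_i)=0$: $N$ has a zero of order at least $2$ at each $\pos_i$. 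A Hadamard-type factorization (Taylor with integral remainder) then writes $N(t)=(t-\pos_1)^2 g_1(t)$ with $g_1$ continuous; since $(t-\pos_1)^2$ is smooth and nonvanishing near $\pos_2,\dots,\pos_\M$, the quotient $g_1$ is again $\Cder{2}$ there and keeps the second-order zeros of $N$ at those points. Iterating over the remaining $\pos_i$ yields $N(t)=\prod_{i=1}^\M(t-\pos_i)^2\,R(t)$ with $R$ continuous on $\dompos$, hence $\detV=2R$ extends continuously to all of $\dompos$.

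To identify the extended value at $\pos_i$, I would combine the Taylor expansion $N(t)=\tfrac12 N''(\pos_i)(t-\pos_i)^2+o((t-\pos_i)^2)$ with the splitting $\prod_{j}(t-\pos_j)^2=(t-\pos_i)^2\prod_{j\ne i}(t-\pos_j)^2$, whose second factor tends to the nonzero constant $\prod_{j\ne i}(\pos_i-\pos_j)^2$; this gives $\lim_{t\to\pos_i}\detV(t)=N''(\pos_i)/\prod_{j\ne i}(\pos_i-\pos_j)^2$. Now $N''(\pos_i)$ is the determinant whose first row is $(u_j''(\pos_i))_j$ followed by the original blocks, and sliding that row down past the $2i$ rows forming the blocks of $\pos_1,\dots,\pos_i$ — an even number of adjacent transpositions, so no sign change — puts it just below $(u_j'(\pos_i))_j$ and produces exactly the determinant in \eqref{eq:Dxi} (up to the positive, configuration-dependent factor $\prod_{j\ne i}(\pos_i-\pos_j)^{-2}$ coming from the chosen prefactor, which is immaterial as only the sign of $\detV$ is used later). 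For $\detW$ the argument is identical: its numerator vanishes to order $\ge 2\M$ at $\pos_0$ because the first row and its first $2\M-1$ derivatives at $\pos_0$ coincide with rows already present; using $\{u_j\}\subset\Cder{2\M}$ one factors out $(t-\pos_0)^{2\M}$, so $\detW=(2\M)!\,R_W$ extends continuously, and the Taylor expansion to order $2\M$ together with the same (even) row move gives the displayed value for $\detW(\pos_0)$.

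The computation is elementary; the two spots that need care are the regularity bookkeeping in the iterated factorization — this is exactly why $\Cder{2}$ (resp.\ $\Cder{2\M}$) is assumed, so each successive quotient is still differentiable enough to peel off the next squared factor — and the sign of the row permutation in the last step, which is harmless since the number of transpositions is always even.
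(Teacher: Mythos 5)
Your argument is correct and follows essentially the same route as the paper, whose proof also consists in Taylor-expanding the first row at $\pos_i$ (resp.\ $\pos_0$) and using the fact that the lower-order derivative rows already appear in the determinant; your packaging through the numerator $N(t)$, its double zeros, and the integral-remainder factorization is just tidier bookkeeping of that same expansion. Your side remark is also accurate: for $\M\ge 2$ the limit of $\detV$ at $\pos_i$ equals the determinant in \eqref{eq:Dxi} divided by the positive factor $\prod_{j\ne i}(\pos_i-\pos_j)^2$, a harmless discrepancy with the literal statement since only the sign of $\detV$ is used in the sequel.
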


\begin{proof}
  This is a simple consequence of the Taylor expansions:
  \begin{align*}
    u_{j}(t)&=u_j(\pos_i)+u'(\pos_i)(t-\pos_i) +u''(\pos_i)\frac{(t-\pos_i)^2}{2} +o\left((t-\pos_i)^2\right),\\
    u_{j}(t)&=u_j(\pos_0)+u'(\pos_0)(t-\pos_0)+\cdots+u^{(2\M)}(\pos_0)\frac{(t-\pos_0)^{2\M}}{(2\M)!} +o\left((t-\pos_i)^{2\M}\right),
  \end{align*}
  as $t\to \pos_i$ (resp. $\pos_0$).
\end{proof}

\subsection{Main theorem}
As explained in Section~\ref{sec:blasso}, the low noise stability of the support of solutions is governed by the \textit{vanishing derivatives precertificate}
\begin{equation*}
  \etaV=\Phi^*\pV \qwhereq \pV\eqdef \argmin\enscond{\normH{p}}{\Phi^*p(x_i)=1,\quad (\Phi^*p)'(\pos_i)=0},
\end{equation*}
provided $\Gamma_{\bpos}$ has full rank. From~\eqref{eq:etaVform}, we known that $\etaV$ can be written as 
\begin{align}\label{eq:etavexpr}
  \eta_V(\pos)=\sum_{i=1}^\M \left(\alpha_i\Co(\pos,\pos_i)+ \beta_i\partial_2\Co(\pos,\pos_i)\right),
\end{align}
where $\Co(\pos,y)\eqdef \dotpH{\varphi(\pos)}{\varphi(y)}$ is the autocorrelation function of $\varphi$.

In the following, we let 
\begin{align}
 \left(\uplain_0,\uplain_1,\uplain_2,\ldots ,\uplain_{2\M-1},\uplain_{2\M}\right)&\eqdef\left(1,\Co(\cdot,x_1),\partial_2\Co(\cdot,\pos_1),\ldots, \Co(\cdot,\pos_\M), \partial_2\Co(\cdot,\pos_M)\right),\label{eq:uplain}
\end{align}
and we define $\detV$ (resp. $\detW$) as in~\eqref{eq:detV} (resp.~\eqref{eq:detW}).

\begin{theorem}\label{thm:tsystem}
  Assume that $\varphi\in \kernel{2}$ and let $\mo=\sum_{i=0}^{\M}\amp_i\delta_{\pos_i}$, with $\amp_i>0$ for all $i\in\{1,\ldots, \M\}$. Assume moreover that $\Gamma_{\bpos}$ has full rank. 
  
  Then the Non-Degenerate Source condition (NDSC) holds for $\mo$ if and only if
  \begin{align}
    \label{eq:cdtdetV}
    \forall t\in \dompos, \quad \detV(t)>0.
  \end{align}

Similarly, let $\varphi\in \kernel{2\M}$ and assume that $\Fdn$ has full rank. 

Then, the $(2\M-1)$ Non Degenerate Source Condition holds at $\pos_0$ if and only if 
  \begin{align}
    \label{eq:cdtdetW}
    \forall t\in \dompos, \quad \detW(t)>0.
  \end{align}
\end{theorem}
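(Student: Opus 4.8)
The plan is to reduce the statement to an interpolation problem for $\etaV$ (resp.\ $\etaW$) and then recognize the rescaled determinant $\detV$ (resp.\ $\detW$) as a Cramer-type expression for the value $1-\etaV(t)$ (resp.\ $1-\etaW(t)$) up to a positive multiplicative constant. First I would expand $\etaV$ using~\eqref{eq:etavexpr}, i.e.\ $\etaV=\sum_i(\alpha_i u_{2i-1}+\beta_i u_{2i})$ where the $u_j$ are as in~\eqref{eq:uplain} (note $u_0\equiv 1$ does not appear). The $2\M$ conditions $\etaV(\pos_j)=1$, $\etaV'(\pos_j)=0$ form a linear system whose matrix is exactly $\Gamma_{\bpos}^*\Gamma_{\bpos}$ (the Gram matrix of the $\phi(\pos_i),\phi'(\pos_i)$), which is invertible by the full-rank hypothesis. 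For a fixed $t\in\dompos\setminus\{\pos_i\}$, consider the $(2\M+1)\times(2\M+1)$ determinant obtained by bordering: columns indexed by $u_0,\dots,u_{2\M}$, the first row being $(u_0(t),\dots,u_{2\M}(t))$ and the remaining rows the evaluations/derivatives at the $\pos_i$. Expanding along the first column (whose entries are $u_0(\cdot)\equiv 1$) and using Cramer's rule on the linear system for $(\alpha_i,\beta_i)$, one obtains that this determinant equals $(1-\etaV(t))\det(\Gamma_{\bpos}^*\Gamma_{\bpos})$ — this is precisely the bordered-determinant identity already used to prove Lemma~\ref{lem:etaVcramer}, applied with the first row at $t$ instead of $\pos_i''$. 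Multiplying by $\tfrac{2}{\prod_i(t-\pos_i)^2}>0$ gives $\detV(t)=\tfrac{2}{\prod_i(t-\pos_i)^2}(1-\etaV(t))\det(\Gamma_{\bpos}^*\Gamma_{\bpos})$ for $t\notin\{\pos_i\}$.

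From this identity, for $t\neq\pos_i$ the sign of $\detV(t)$ equals the sign of $1-\etaV(t)$, so $\detV(t)>0 \iff \etaV(t)<1$. It remains to treat the points $t=\pos_i$. Here Lemma~\ref{lem:vandermonde} tells us $\detV$ extends continuously to $\pos_i$ with value~\eqref{eq:Dxi}; on the other hand, taking $t\to\pos_i$ in the displayed identity and using $\etaV(\pos_i)=1$, $\etaV'(\pos_i)=0$ together with the Taylor expansion $1-\etaV(t)=-\tfrac12\etaV''(\pos_i)(t-\pos_i)^2+o((t-\pos_i)^2)$, one gets $\detV(\pos_i)=-\etaV''(\pos_i)\det(\Gamma_{\bpos}^*\Gamma_{\bpos})$; this is exactly Lemma~\ref{lem:etaVcramer}. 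Hence $\detV(\pos_i)>0\iff\etaV''(\pos_i)<0$. Combining the two cases, $\forall t\in\dompos,\ \detV(t)>0$ is equivalent to ($\etaV(t)<1$ for $t\notin\{\pos_i\}$ and $\etaV''(\pos_i)<0$ for all $i$), which is the NDSC. The argument for $\detW$ and $(2\M-1)$-nondegeneracy is verbatim the same, replacing $\Gamma_{\bpos}$ by $\Fdn$, the interpolation conditions by $\etaW^{(k)}(\pos_0)=\delta_{k,0}$ for $0\le k\le 2\M-1$, the rescaling factor $\tfrac{2}{\prod(t-\pos_i)^2}$ by $\tfrac{(2\M)!}{(t-\pos_0)^{2\M}}$, the second-order Taylor expansion by the order-$2\M$ one, and invoking the second halves of Lemmas~\ref{lem:vandermonde} and~\ref{lem:blassocramer}.

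The only genuinely delicate point is the behavior at the interpolation nodes: one must check that the continuous extension of $\detV$ provided by Lemma~\ref{lem:vandermonde} agrees with the limit of $\tfrac{2}{\prod(t-\pos_i)^2}(1-\etaV(t))\det(\Gamma_{\bpos}^*\Gamma_{\bpos})$, i.e.\ that the division by $(t-\pos_i)^2$ is compensated by the double zero of $1-\etaV$ at $\pos_i$ (which holds because $\etaV$ interpolates $1$ with vanishing first derivative there, and $\etaV\in\Cder{2}$ since $\varphi\in\kernel{2}$). This is a routine Taylor-expansion check, and it is the step where the regularity hypotheses $\varphi\in\kernel{2}$ (resp.\ $\kernel{2\M}$) and the full-rank assumption are really used. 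Everything else is linear algebra (Cramer's rule) plus the elementary Lemma~\ref{lem:vandermonde}. I would present the proof for $\detV$ in full and then indicate that the $\detW$ case is identical \emph{mutatis mutandis}.
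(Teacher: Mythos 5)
Your argument is correct, and it takes a somewhat different — in fact more direct — route than the paper. The paper never writes the identity $\detV(t)=\tfrac{2}{\prod_i(t-\pos_i)^2}\,(1-\etaV(t))\,\det(\Gamma_{\bpos}^*\Gamma_{\bpos})$; instead it only proves the unsigned equivalence ``$\detV(t)\neq 0\iff\etaV(t)\neq 1$'' for $t\notin\{\pos_i\}$, by exhibiting a kernel vector of the bordered matrix and reaching a contradiction (normalizing its first coordinate to $-1$ and invoking uniqueness of $\etaV$), and then recovers the \emph{sign} information indirectly: positivity of $\detV$ at the nodes (via Lemmas~\ref{lem:etaVcramer} and~\ref{lem:vandermonde}) plus non-vanishing and continuity of $\detV$ and of $1-\etaV$ on the intervals between nodes. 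Your bordered-determinant/Cramer identity (obtained, e.g., by subtracting from the $u_0$-column the combination $\sum_j c_j\,\mathrm{col}_j$ with $(\Gamma_{\bpos}^*\Gamma_{\bpos})c=(1,0,\dots,1,0)^T$) gives the signed, pointwise equivalence at once and makes the global continuity/sign-propagation step unnecessary; it also exhibits the exact proportionality between $\detV(t)$ and $1-\etaV(t)$, which is slightly more information than the theorem asks for. Both proofs need the continuous extension at the nodes, and both handle the $\etaW$/$\detW$ case \emph{mutatis mutandis}, so the overall architecture is close, but your key step (explicit identity) replaces the paper's contradiction-plus-continuity argument.

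One small caveat, which does not affect the conclusion: if you take $t\to\pos_i$ directly in your displayed identity, the Taylor expansion of $1-\etaV$ cancels only the factor $(t-\pos_i)^2$, so the limit is
\begin{align*}
\detV(\pos_i)=\frac{-\etaV''(\pos_i)\,\det(\Gamma_{\bpos}^*\Gamma_{\bpos})}{\prod_{j\neq i}(\pos_i-\pos_j)^{2}},
\end{align*}
i.e.\ your stated formula $\detV(\pos_i)=-\etaV''(\pos_i)\det(\Gamma_{\bpos}^*\Gamma_{\bpos})$ holds only up to this strictly positive factor (the same normalization is implicit when combining Lemmas~\ref{lem:vandermonde} and~\ref{lem:etaVcramer}). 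Since only the sign of $\detV(\pos_i)$ is used, the equivalence $\detV(\pos_i)>0\iff\etaV''(\pos_i)<0$ and hence the theorem are unaffected; just state the node identity with the correct constant (there is no such factor in the $\detW$ case, where $\pos_0$ is the only node).
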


%\begin{rem}
The above conditions are clearly reminiscent of the \textsc{Determinantal} condition in \cite{schiebinger2015superresolutionjournal}. Three differences should be noted. First, it is an infinitesimal condition: we do not consider neighborhoods of the $\pos_i$'s, but instead we consider derivatives. That both makes the proof simpler and provides us with the non-vanishing of the second derivatives of $\etaV$. Second of all, the sign matters: we impose the positivity, which implies that the constructed function $\etaV$ is below $1$ (whereas the condition in~\cite{schiebinger2015superresolutionjournal} only states that either $\etaV$ or $1-\etaV$ is below $1$). Last, we do not introduce any normalization, and we choose $\uplain_0=1$.
%\end{rem}

\begin{proof}
  First, we recall that if $\Gamma_{\bpos}$ has full rank, $\etaV$ is the unique function of the form~\eqref{eq:etavexpr} such that $\etaV(\pos_i)=1$, $\etaV'(\pos_i)=0$.
  
Note that by Lemma~\ref{lem:blassocramer} and Lemma~\ref{lem:vandermonde}, we know that $\etaV''(\pos_i)<0$ if and only if  
$\detV(\pos_i)>0$. It remains to show that for $t\in\dompos\setminus\{\pos_i\}_{i=1}^\M$,  $\detV(t)\neq 0$ if and only if $\etaV(t)\neq 1$.

Suppose that $\detV(t)\neq 0$ and assume by contradiction that $\etaV(t)= 1$. Hence the vector $(-1,\alpha_1,\beta_1,\ldots,\alpha_\M,\beta_\M)$ is a non-trivial kernel element of the matrix in~\eqref{eq:detV}, which contradicts $\detV(t)\neq 0$.
  Conversely, if $\etaV(t)\neq 1$, assume by contradiction that $\detV(t)=0$.
There exists a non trivial vector $(\tilde{\alpha}_0,\tilde{\alpha}_1,\tilde{\beta_1},\cdots,\tilde{\alpha}_\M,\tilde{\beta_\M})$ in the kernel of the matrix in~\eqref{eq:detV}. If $\tilde{\alpha}_0=0$, this yields a non trivial vector in the kernel of $\Gamma_{\bpos}^*\Gamma_{\bpos}$, contradicting the fact that $\Gamma_{\bpos}$ has full rank.
  Hence, we may assume that $\tilde{\alpha}_0=-1$, hence the function 
  \begin{align*}
    \pos\mapsto \sum_{i=1}^\M \left(\tilde{\alpha}_i\Co(\pos,\pos_i)+ \tilde{\beta}_i\partial_2\Co(\pos,\pos_i)\right)
  \end{align*}
is equal to $\etaV$, and $(-1,\tilde{\alpha}_1,\tilde{\beta_1},\cdots,\tilde{\alpha}_\M,\tilde{\beta}_\M)$ being in the kernel of~\eqref{eq:detV} is equivalent to $\etaV(t)=1$, a contradiction.

To conclude, if $\detV(t)>0$ for all $t\in\dompos$, then by continuity $1-\etaV$ cannot vanish in any interval $(\inf\dompos,\pos_1)$, $(\pos_i,\pos_{i+1})$ or $(\pos_{M},\sup\dompos)$, and $\etaV''(\pos_i)<0$ for $1\leq i\leq \M$. As a result $1-\etaV(t)<1$ for all $t\in \dompos\setminus\{\pos_i\}_{i=1}^\M$ and the (NDSC) holds. Conversely, if  $1-\etaV(t)<1$ for all $t\in \dompos\setminus\{\pos_i\}_{i=1}^\M$ and $\etaV''(\pos_i)<0$, we obtain that $\detV(t)>0$ for  $1\leq i\leq \M$.

  Thus, the equivalence is proved in the case of $\etaV$ and $\detV$. The proof for $\etaW$ and $\etaW$ is the same, \textit{mutatis mutandis}, by observing that if $\Fdn$ has full rank, $\etaW$ is the unique function of the form
    \begin{align*}
      \pos\mapsto \sum_{k=1}^{2\M-1} {\alpha}_k(\partial_2)^k \Co(\pos,\pos_0)
  \end{align*}
  which satisfies $\etaW(\pos_0)=1$, $\etaW^{(k)}=0$ for $k\in \{1,\ldots, 2\M-1\}$.
\end{proof}

\begin{rem}
  Note that the condition $\detV(t)>0$ and $\detW(t)>0$ are satisfied for all $t\in \dompos$ in the case where the family
  \begin{align}
    \begin{pmatrix}
      1 & \Co(\cdot,\pos_1) & \partial_2\Co(\cdot,\pos_1) &\ldots & \Co(\cdot,\pos_\M) & \partial_2\Co(\cdot,\pos_\M) 
    \end{pmatrix}
  \end{align}
  is an \textit{extended T-system} in the sense of~\cite{karlin1966tchebycheff}.
\end{rem}

\subsection{The Cauchy-Binet formula}
\label{sec:cauchy}
As noticed in~\cite{schiebinger2015superresolutionjournal}, the Cauchy-Binet formula (also called \textit{basic composition formula}~\cite[Ch.~1, example 8]{karlin1966tchebycheff})
is a powerful tool to prove recovery results for the \blassosc. In this section, we describe how it yields sufficient conditions for the criteria exhibited above.

% It is possible to derive properties of the correlation functions by studying the observation process.
As explained in Section~\ref{sec:blassoreconstruction}, we consider reconstruction frameworks which are determined by $\Hh=\LDP$, $\varphi(\pos)=\s\mapsto \psi(\pos,\s)$, so that 
\begin{align*}
  \Co(x,x')\eqdef\dotpH{\varphi(\pos)}{\varphi(\pos')}=\int_{\domobs}\psi(\pos,\s)\psi(\pos',\s)\d\Ps(\s). 
\end{align*}

We denote by $\Ps^{\otimes \M}$ the product measure of $\Ps$ on $\domobs^{\M}$, and we define 
\begin{align}
  \TS&\eqdef\enscond{(\s_1,\ldots, \s_{\M})\in \domobs^{\M}}{\s_1<\ldots <\s_{\M}}.
%  \BTSD&\eqdef\enscond{(\s_1,\ldots, \s_{2\M})\in \domobs^{2\M}}{\s_1\leq\ldots \leq\s_{2\M}}.
\end{align}

Since we extensively use the Cauchy-Binet formula and variants of it in the next paragraphs, let us recall its principle. Given $\theta,\xi\in\LDP$, one computes the following determinant by using linearity along each column
\begin{align}
  & \begin{vmatrix}
    \int_{\domobs}\theta(t_1,\s)\xi(u_1,\s)\d\Ps(\s) &\cdots &\int_{\domobs}\theta(t_1,\s)\xi(u_\M,\s)\d\Ps(\s)\\
    \vdots & & \vdots\\
    \int_{\domobs}\theta(t_\M,\s)\xi(u_1,\s)\d\Ps(\s) &\cdots &\int_{\domobs}\theta(t_\M,\s)\xi(u_\M,\s)\d\Ps(\s)
\end{vmatrix}\label{eq:cauchybinet1}\\
  &=\int_{\domobs}\ldots\int_{\domobs} \begin{vmatrix}
  \theta(t_1,\s_1)\xi(u_1,\s_1) &\cdots &\theta(t_1,\s_\M)\xi(u_\M,\s_\M)\\
    \vdots & & \vdots\\
    \theta(t_\M,\s_1)\xi(u_1,\s_1) &\cdots &\theta(t_\M,\s_\M)\xi(u_\M,\s_\M)
\end{vmatrix}\d\Ps(\s_1)\ldots \d\Ps(\s_\M)\nonumber\\
&=\int_{\domobs^\M}\xi(u_1,\s_1)\ldots \xi(u_\M,\s_\M)\begin{vmatrix}
  \theta(t_1,\s_1) &\cdots &\theta(t_1,\s_\M)\\
    \vdots & & \vdots\\
    \theta(t_\M,\s_1) &\cdots &\theta(t_\M,\s_\M)
\end{vmatrix}\d\Ps^{\otimes\M}(\s_1,\ldots,\s_\M).\nonumber
\end{align}
Now, the determinant vanishes if at least two $\s_i$ values are equal. Removing such $\M$-plets, the remaining $\M$-plets may be written as $(\s_{\sigma(1)},\ldots,\s_{\sigma(\M)})$ for some vector $(\s_1,\ldots,\s_\M)\in\TS$ and some permutation $\sigma$ of $\{1, \ldots,\M \}$. Splitting the domain $\domobs^\M$ along all such permutations and reordering the columns of the determinant in $\theta$, we obtain that~\eqref{eq:cauchybinet1} is equal to 
\begin{align}
 \int_{\TS}\begin{vmatrix}
   \xi(u_1,\s_1) & \cdots & \xi(u_1,\s_\M)\\
   \vdots & & \vdots\\
   \xi(u_\M,\s_1) & \cdots & \xi(u_\M,\s_\M)
 \end{vmatrix}\begin{vmatrix}
  \theta(t_1,\s_1) &\cdots &\theta(t_1,\s_\M)\\
    \vdots & & \vdots\\
    \theta(t_\M,\s_1) &\cdots &\theta(t_\M,\s_\M)
\end{vmatrix}\d\Ps^{\otimes\M}(\s_1,\ldots,\s_\M). \label{eq:cauchybinet2}
\end{align}
We note that the integration domain satisfies $\Ps^{\otimes \M}(\TS)>0$ if and only if $\supp(\Ps)$ has at least $\M$ points.

Now, we introduce several determinants which are helpful to prove that the conditions of Theorem~\ref{thm:tsystem} hold.
\begin{align}
  \label{eq:defAV}
  \detAV(\s_1,\ldots, \s_{2\M}) &\eqdef \begin{mydet}
    \psi(\pos_1, \s_1) & \partial_1\psi(\pos_1,\s_1) & \cdots &  \psi(\pos_\M, \s_1) & \partial_1\psi(\pos_\M,\s_1)\\
    \vdots & \vdots & & \vdots & \vdots\\
    \psi(\pos_1, \s_{2\M}) & \partial_1\psi(\pos_1,\s_{2\M}) & \cdots &  \psi(\pos_\M, \s_{2\M}) & \partial_1\psi(\pos_\M,\s_{2\M}) 
  \end{mydet},\\
  \label{eq:defBV}
  \mbox{and } \detBV(t,\s_1,\ldots, \s_{2\M}) &\eqdef \frac{1}{\prod_{i=1}^{\M}(t-\pos_i)^2}\begin{mydet}
  1 & \psi(t,\s_1) & \cdots & \psi(t,\s_{2\M})\\  
  1 & \psi({\pos_1},\s_1) & \cdots & \psi(\pos_{1},\s_{2\M})\\  
  0 & \partial_1\psi({\pos_1},\s_1) & \cdots & \partial_1\psi(\pos_{1},\s_{2\M})\\  
  \vdots & \vdots & & \vdots\\
  1 & \psi({\pos_{\M}},\s_1) & \cdots & \psi(\pos_{\M},\s_{2\M})\\  
  0 & \partial_1\psi(\pos_{\M},\s_1) & \cdots & \partial_1\psi(\pos_{\M},\s_{2\M})\\  
  \end{mydet}.
\end{align}
Similarly to~\eqref{eq:Dxi}, $\detBV$ may be extended by continuity at each $t=\pos_i$ with a determinant whose first row is $\begin{pmatrix}
  0 &  \left((\partial_1)^2\psi(\pos_i,\s_j)\right)_{1\leq j\leq 2\M}
\end{pmatrix}$.

By an elementary variant of the Cauchy-Binet formula, one may prove that 
\begin{align*}
  \det(\Ga_{\pos}^*\Ga_{\pos})= \int_{\TSD} \left(\detAV(\s_1,\ldots,\s_{2\M})\right)^2\d\Ps^{\otimes 2\M}(\s_1,\ldots,\s_{2\M}),\\
  \detV(t)=\int_{\TSD}\detAV(\s_1,\ldots,\s_{2\M})\detBV(t,\s_1,\ldots,\s_{2\M})\d\Ps^{\otimes 2\M}(\s_1,\ldots,\s_{2\M}).
\end{align*}

We obtain 
\begin{proposition}\label{prop:basiccompo}
  If $\card(\supp(\Ps))\geq 2\M$, and 
  \begin{align}
    \mbox{$\Ps^{\otimes 2\M}$-a.e. $(\s_1,\ldots,\s_{2\M})\in \TSD$},\quad \detAV(\s_1,\ldots, \s_{2\M})   >0,
  \end{align} then $\Gamma_\bpos$ has full rank.

For all $t\in \dompos$, if moreover
  \begin{align}
   \mbox{$\Ps^{\otimes 2\M}$-a.e. $(\s_1,\ldots,\s_{2\M})\in \TSD$},\quad \detBV(t,\s_1,\ldots, \s_{2\M})>0,
  \end{align}
then  $\detV(t)>0$.
\end{proposition}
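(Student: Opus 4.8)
The plan is to derive both identities from the Cauchy--Binet computation spelled out in~\eqref{eq:cauchybinet1}--\eqref{eq:cauchybinet2}, applied with a suitable choice of the functions $\theta$ and $\xi$, and then read off the two positivity claims as immediate consequences. Concretely, I would first observe that the Gram matrix $\Ga_{\bpos}^*\Ga_{\bpos}$ is a $2\M\times 2\M$ matrix whose $(k,\ell)$ entry is $\int_{\domobs}\rho_k(\s)\rho_\ell(\s)\d\Ps(\s)$, where $(\rho_1,\ldots,\rho_{2\M})$ is the list of functions $\bigl(\psi(\pos_1,\cdot),\partial_1\psi(\pos_1,\cdot),\ldots,\psi(\pos_\M,\cdot),\partial_1\psi(\pos_\M,\cdot)\bigr)$. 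Expanding $\det(\Ga_{\bpos}^*\Ga_{\bpos})$ by multilinearity along all $2\M$ columns (the Cauchy--Binet step, now with $2\M$ integration variables instead of $\M$), collapsing the diagonal terms and regrouping permutations exactly as in the passage from~\eqref{eq:cauchybinet1} to~\eqref{eq:cauchybinet2}, one gets $\det(\Ga_{\bpos}^*\Ga_{\bpos})=\int_{\TSD}\bigl(\detAV(\s_1,\ldots,\s_{2\M})\bigr)^2\d\Ps^{\otimes 2\M}$, since both determinant factors produced by the expansion coincide with $\detAV$. This is a positive-definite Gram matrix iff the integrand is not $\Ps^{\otimes 2\M}$-a.e.\ zero on $\TSD$, so the hypothesis $\detAV>0$ a.e.\ on $\TSD$ together with $\card(\supp\Ps)\ge 2\M$ (which ensures $\Ps^{\otimes 2\M}(\TSD)>0$, as noted right after~\eqref{eq:cauchybinet2}) forces $\det(\Ga_{\bpos}^*\Ga_{\bpos})>0$, i.e.\ $\Ga_{\bpos}$ has full rank.

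For the second identity, I would recall from~\eqref{eq:etavexpr} and Lemma~\ref{lem:etaVcramer}/Lemma~\ref{lem:vandermonde} that $\detV(t)$, as defined in~\eqref{eq:detV} with the choice~\eqref{eq:uplain} of $(\uplain_0,\ldots,\uplain_{2\M})$, is (up to the rescaling factor) the $(2\M+1)\times(2\M+1)$ determinant whose first column is $(1,1,0,1,0,\ldots)^T$ and whose remaining $2\M$ columns carry the autocorrelation values $\Co(\cdot,\pos_i)$, $\partial_2\Co(\cdot,\pos_i)$ and their first derivatives at the rows $t,\pos_1,\pos_1,\ldots,\pos_\M,\pos_\M$. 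Writing each such entry as an integral against $\Ps$ — e.g.\ $\Co(t,\pos_i)=\int\psi(t,\s)\psi(\pos_i,\s)\d\Ps(\s)$, $\partial_2\Co(t,\pos_i)=\int\psi(t,\s)\partial_1\psi(\pos_i,\s)\d\Ps(\s)$, and the ``$1$'' entry as $\int 1\cdot\d\Ps$ only if one wishes to homogenize, but in fact the clean route is to expand only along the $2\M$ autocorrelation columns — I would apply the same multilinear expansion. Each of the $2\M$ expanded columns contributes a factor $\psi(\pos_i,\s_j)$ or $\partial_1\psi(\pos_i,\s_j)$ in the ``row'' variables and a matching factor in the ``$t,\pos_1,\ldots$'' variables; the $t,\pos_i$-block of rows reassembles (after the permutation/reordering bookkeeping and after carrying the rescaling $1/\prod(t-\pos_i)^2$ through) into $\detBV(t,\s_1,\ldots,\s_{2\M})$, while the $\pos_i,\pos_i$-block of rows reassembles into $\detAV(\s_1,\ldots,\s_{2\M})$. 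Collapsing to $\TSD$ as before yields $\detV(t)=\int_{\TSD}\detAV\cdot\detBV(t,\cdot)\,\d\Ps^{\otimes 2\M}$. Given the a.e.\ positivity of both $\detAV$ and $\detBV(t,\cdot)$ on $\TSD$ and $\Ps^{\otimes 2\M}(\TSD)>0$, the integral is strictly positive, which is exactly $\detV(t)>0$.

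I would carry out the steps in that order: (1) recast the two Gram-type determinants as determinants of matrices of $\Ps$-integrals, using~\eqref{eq:derivint} to differentiate under the integral so that $\varphi'(\pos_i)$ really corresponds to $\partial_1\psi(\pos_i,\cdot)\in\LDP$; (2) perform the Cauchy--Binet multilinear expansion with $2\M$ variables, mirroring~\eqref{eq:cauchybinet1}; (3) discard the measure-zero coincidence set where two $\s_j$ agree and regroup over permutations to pass to an integral over $\TSD$, mirroring~\eqref{eq:cauchybinet2}, tracking the sign of the reordering so the two determinant factors land with the right orientation; (4) invoke $\card(\supp\Ps)\ge 2\M\iff\Ps^{\otimes 2\M}(\TSD)>0$ and conclude positivity. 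The only delicate bookkeeping — and the part I expect to be the main obstacle — is step (3) combined with the rescaling in~\eqref{eq:defBV}: one must check that the antisymmetrization of the ``row'' determinant exactly cancels the sign of the permutation so that the two factors appearing under the integral are $\detAV$ and $\detBV$ with consistent (rather than opposite) sign, and that the singular prefactor $1/\prod_{i=1}^\M(t-\pos_i)^2$ commutes with the expansion (it multiplies the $t,\pos_i$-block determinant only, and its removable-singularity extension at $t=\pos_i$ is precisely the one described after~\eqref{eq:defBV}, consistent with Lemma~\ref{lem:vandermonde}). Everything else is the routine ``basic composition formula'' manipulation already rehearsed in Section~\ref{sec:cauchy}.
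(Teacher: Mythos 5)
Your proposal is correct and follows essentially the same route as the paper: the paper obtains Proposition~\ref{prop:basiccompo} precisely from the two Cauchy--Binet identities $\det(\Gamma_{\bpos}^*\Gamma_{\bpos})=\int_{\TSD}\left(\detAV\right)^2\d\Ps^{\otimes 2\M}$ and $\detV(t)=\int_{\TSD}\detAV\,\detBV(t,\cdot)\,\d\Ps^{\otimes 2\M}$ stated just before it, combined with the remark that $\Ps^{\otimes 2\M}(\TSD)>0$ if and only if $\card(\supp(\Ps))\geq 2\M$. Your bookkeeping of the expansion (keeping the constant column intact, antisymmetrizing over permutations of the $\s_j$ so the column factors reassemble into $\detAV$, and carrying the prefactor $1/\prod_{i}(t-\pos_i)^2$ with the $t$-dependent factor) is exactly the ``elementary variant'' the paper invokes, and the harmless positive constants do not affect the positivity conclusions.
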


Similarly, we may define  
\begin{align}
   \label{eq:defAW}
  \detAW(\s_1,\ldots, \s_{2\M}) &\eqdef \begin{mydet}
    \psi(\pos_0, \s_1) & \partial_1\psi(\pos_0,\s_1) & \cdots &  (\partial_1)^{2\M-1}\psi(\pos_0, \s_1)\\
    \vdots & \vdots & & \vdots \\
    \psi(\pos_0, \s_{2\M}) & \partial_1\psi(\pos_0,\s_{2\M}) & \cdots &  (\partial_1)^{2\M-1}\psi(\pos_0, \s_{2\M})
  \end{mydet},\\
    \label{eq:defBW}
    \mbox{and } \detBW(t,\s_1,\ldots, \s_{2\M}) &\eqdef \frac{1}{(t-\pos_0)^{2\M}}\begin{mydet}
    1 & \psi(t,\s_1) & \cdots & \psi(t,\s_{2\M})\\  
    1 & \psi({\pos_0},\s_1) & \cdots & \psi(\pos_{0},\s_{2\M})\\  
    0 & \partial_1\psi({\pos_0},\s_1) & \cdots & \partial_1\psi(\pos_{0},\s_{2\M})\\  
  \vdots & \vdots & & \vdots\\
    0 & (\partial_1)^{2\M-1}\psi({\pos_{0}},\s_1) & \cdots & (\partial_1)^{2\M-1}\psi(\pos_{0},\s_{2\M})
  \end{mydet}.
\end{align}
As above, one may prove that 
\begin{align*}
  \det(\Fdn^*\Fdn)= \int_{\TSD} \left(\detAW(\s_1,\ldots,\s_{2\M})\right)^2\d\Ps^{\otimes 2\M}(\s_1,\ldots,\s_{2\M}),\\
  \detW(t)=\int_{\TSD}\detAW(\s_1,\ldots,\s_{2\M})\detBW(t,\s_1,\ldots,\s_{2\M})\d\Ps^{\otimes 2\M}(\s_1,\ldots,\s_{2\M}).
\end{align*}
thus 
\begin{proposition}\label{prop:basiccompoetaW}
  If $\card(\supp(\Ps))\geq 2\M$, and 
  \begin{align}
    \mbox{$\Ps^{\otimes 2\M}$-a.e. $(\s_1,\ldots,\s_{2\M})\in \TSD$},\quad \detAW(\s_1,\ldots, \s_{2\M})   >0,
  \end{align} then $\Gamma_\bpos$ has full rank.

For all $t\in \dompos$, if moreover
  \begin{align}
   \mbox{$\Ps^{\otimes 2\M}$-a.e. $(\s_1,\ldots,\s_{2\M})\in \TSD$},\quad \detBW(t,\s_1,\ldots, \s_{2\M})>0,
  \end{align}
then  $\detW(t)>0$.
\end{proposition}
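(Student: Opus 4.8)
The plan is to deduce the proposition from the two integral identities displayed immediately before it, exactly as Proposition~\ref{prop:basiccompo} was deduced from its analogues for $\detV$; the argument is the transcription of that one, with a $2\M$-fold Cauchy--Binet expansion in place of the $\M$-fold one of Section~\ref{sec:cauchy}.

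First, a word on those identities, since that is really where the content sits. They are instances of the basic composition (Cauchy--Binet) formula exhibited in Section~\ref{sec:cauchy}: writing each entry of the Gram matrix $\Fdn^*\Fdn$ as $\dotpH{\phiD{k}}{\phiD{l}}=\int_{\domobs}(\partial_1)^k\psi(\pos_0,\s)\,(\partial_1)^l\psi(\pos_0,\s)\,\d\Ps(\s)$ and expanding the determinant by multilinearity along its $2\M$ columns, then discarding the $2\M$-tuples with repeated entries (which annihilate the determinant), splitting $\domobs^{2\M}$ into the permutation orbits of $\TSD$ and reordering columns — the very manipulation carried out in passing from~\eqref{eq:cauchybinet1} to~\eqref{eq:cauchybinet2} — one obtains $\det(\Fdn^*\Fdn)=\int_{\TSD}(\detAW)^2\,\d\Ps^{\otimes 2\M}$ with $\detAW$ the generalized Vandermonde~\eqref{eq:defAW}. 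The same computation applied to the determinant defining $\detW(t)$ in~\eqref{eq:detW} — where the $\uplain_0\equiv 1$ column and the $t$-row are simply carried along (they are not integrated) and end up as the first column and first row of the $\detBW$ determinant — gives $\detW(t)=\int_{\TSD}\detAW\,\detBW(t,\cdot)\,\d\Ps^{\otimes 2\M}$; for $t=\pos_0$ this persists because $\detW$ and $\detBW(\cdot,\s_1,\ldots,\s_{2\M})$ extend continuously there by Lemma~\ref{lem:vandermonde}. Since $\varphi\in\kernel{2\M}$, every factor $(\partial_1)^i\psi(\pos_0,\cdot)$ with $i\leq 2\M$ and $\psi(t,\cdot)$ lie in $\LDP$, so all the products occurring are $\Ps^{\otimes 2\M}$-integrable and the interchange of determinant and integral is licit.

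Granting the identities, both assertions are one line each. The hypothesis $\card(\supp(\Ps))\geq 2\M$ yields $\Ps^{\otimes 2\M}(\TSD)>0$: choosing $2\M$ pairwise disjoint intervals of $\domobs$, each of positive $\Ps$-mass, and ordering them produces a product set of positive $\Ps^{\otimes 2\M}$-measure inside $\TSD$ (the $2\M$-point version of the remark following~\eqref{eq:cauchybinet2}). Since $(\detAW)^2\geq 0$ on all of $\TSD$ and is $>0$ there $\Ps^{\otimes 2\M}$-almost everywhere, the first identity forces $\det(\Fdn^*\Fdn)>0$, hence $\Fdn^*\Fdn$ is invertible and $\Fdn$ has full column rank. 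For the second assertion, if moreover $\detBW(t,\cdot)>0$ $\Ps^{\otimes 2\M}$-a.e.\ on $\TSD$, then $\detAW\cdot\detBW(t,\cdot)>0$ $\Ps^{\otimes 2\M}$-a.e.\ on a set of positive measure, and the second identity gives $\detW(t)>0$.

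I do not foresee any genuine obstacle: the sole non-bookkeeping point is the validity of the $2\M$-fold composition formula, and that is settled by the computation already spelled out for $\detV$, the only change being $2\M$ columns in place of $\M$ pairs of columns. (One should read the first conclusion as ``$\Fdn$ has full rank'' rather than ``$\Gamma_\bpos$ has full rank'', the former being the hypothesis needed in Theorem~\ref{thm:tsystem} to define $\detW$ and $\etaW$.)
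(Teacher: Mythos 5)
Your proof is correct and follows essentially the same route as the paper: the two Cauchy--Binet identities stated just before the proposition (obtained exactly as in the passage from~\eqref{eq:cauchybinet1} to~\eqref{eq:cauchybinet2}), combined with $\Ps^{\otimes 2\M}(\TSD)>0$ when $\card(\supp(\Ps))\geq 2\M$ and the a.e.\ positivity of the integrands, yield both conclusions. Your parenthetical remark is also right: the first conclusion should read that $\Fdn$ (not $\Gamma_\bpos$) has full rank, which is what the identity $\det(\Fdn^*\Fdn)=\int_{\TSD}(\detAW)^2\,\d\Ps^{\otimes 2\M}$ actually delivers and what Theorem~\ref{thm:tsystem} requires for $\etaW$.
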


\subsection{Extension to the unconstrained Lasso}
While the main focus of this paper is the Lasso with positivity constraint, let us note that the characterization of the NDSC provided in Theorem~\ref{thm:tsystem} can be adapted to the unconstrained Lasso. If $m_0=\sum_{i=1}^\M a_i\delta_{\pos_i}$, let $I^+=\enscond{i\in \{1,\ldots,\M\}}{a_i>0}$ and $I^-=\enscond{i\in \{1,\ldots,\M\}}{a_i<0}$. The Non-Degenerate Source Condition on $\etaV$ is that 
\begin{align}
  \forall i\in I^+,\quad  \etaV(\pos_i) &=1, \quad \etaV''(\pos_i)<0,\\
  \forall i\in I^-,\quad  \etaV(\pos_i) &=-1, \quad \etaV''(\pos_i)>0,\\
  \forall t\in \dompos\setminus\{\pos_i\}_{i=1}^\M, \quad \abs{\etaV(t)}&<1.
\end{align}
Using the same arguments as above, it is possible to prove that the Non-Degenerate Source Condition is equivalent to
\begin{align}
  \forall t\in \dompos,\quad  \detV^+(t)&>0 \qandq \detV^-(t)<0.
\end{align}
where 
\begin{align*}
  \detV^+(t)&\eqdef \frac{2}{\prod_{i\in I^+}(t-\pos_i)^2}\\
            &\times\begin{mydet}
    1 & \Co(t,\pos_1) & \partial_2\Co(t,\pos_1) & \cdots &\Co(t,\pos_{\M}) & \partial_2\Co(t,\pos_{\M})\\
    \sign(a_1)  & \Co(\pos_1,\pos_1) & \partial_2\Co(\pos_1,\pos_1) & \cdots &\Co(\pos_1,\pos_{\M}) & \partial_2\Co(\pos_1,\pos_{\M})\\
    0  & \partial_1\Co(\pos_1,\pos_1) & \partial_1\partial_2\Co(\pos_1,\pos_1) & \cdots &\partial_1\Co(\pos_1,\pos_{\M}) & \partial_1\partial_2\Co(\pos_1,\pos_{\M})\\
  \vdots & \vdots &\vdots& & \vdots &\vdots\\
    \sign(a_\M)  & \Co(\pos_\M,\pos_1) & \partial_2\Co(\pos_\M,\pos_1) & \cdots &\Co(\pos_\M,\pos_{\M}) & \partial_2\Co(\pos_\M,\pos_{\M})\\
    0  & \partial_1\Co(\pos_{\M},\pos_1) & \partial_1\partial_2\Co(\pos_{\M},\pos_1) & \cdots &\partial_1\Co(\pos_{\M},\pos_{\M}) & \partial_1\partial_2\Co(\pos_{\M},\pos_{\M})
  \end{mydet},\\
  \detV^-(t)&\eqdef \frac{2}{\prod_{i\in I^-}(t-\pos_i)^2}\\
            &\times \begin{mydet}
    -1 & \Co(t,\pos_1) & \partial_2\Co(t,\pos_1) & \cdots &\Co(t,\pos_{\M}) & \partial_2\Co(t,\pos_{\M})\\
    \sign(a_1)  & \Co(\pos_1,\pos_1) & \partial_2\Co(\pos_1,\pos_1) & \cdots &\Co(\pos_1,\pos_{\M}) & \partial_2\Co(\pos_1,\pos_{\M})\\
    0  & \partial_1\Co(\pos_1,\pos_1) & \partial_1\partial_2\Co(\pos_1,\pos_1) & \cdots &\partial_1\Co(\pos_1,\pos_{\M}) & \partial_1\partial_2\Co(\pos_1,\pos_{\M})\\
  \vdots & \vdots &\vdots& & \vdots &\vdots\\
    \sign(a_\M)  & \Co(\pos_\M,\pos_1) & \partial_2\Co(\pos_\M,\pos_1) & \cdots &\Co(\pos_\M,\pos_{\M}) & \partial_2\Co(\pos_\M,\pos_{\M})\\
    0  & \partial_1\Co(\pos_{\M},\pos_1) & \partial_1\partial_2\Co(\pos_{\M},\pos_1) & \cdots &\partial_1\Co(\pos_{\M},\pos_{\M}) & \partial_1\partial_2\Co(\pos_{\M},\pos_{\M})
  \end{mydet}.
\end{align*}

% !TEX root = ../SiimsLaplace.tex

\section{The case of Laplace and Gaussian measurements}\label{sec:laplacegauss}
Now, we discuss the applicability of Theorem~\ref{thm:tsystem} to Laplace and Gaussian measurements.

\subsection{Laplace measurements $\psi(\pos,\s)=e^{-\pos\s}$}\label{sec:laplace}
  It is standard that the exponential kernel $K(x,y)=e^{xy}$ is \textit{extended totally positive} (see~\cite[Ch. 3]{karlin1968total}), that is for all $N\in \NN^*$, all real numbers $x_1< \ldots< x_N$, $x'_1< \ldots < x'_N$, there holds $\det((K(x_i,x'_j))_{1\leq i,j\leq N})>0$. If $x_i=x_{i+1}=\ldots=x_{i+r-1}$, then one may replace the corresponding rows with the successive derivatives
\begin{align}\label{eq:replaceET}
    \begin{pmatrix}
     K(x_i,x'_1) & \cdots &  K(x_i,x'_N)\\
    \partial_1 K(x_i,x'_1) & \cdots & \partial_1 K(x_i,x'_N)\\
    \vdots & & \vdots \\
    (\partial_1)^{r-1} K(x_i,x'_1) & \cdots & (\partial_1)^{r-1} K(x_i,x'_N)
    \end{pmatrix}
  \end{align}
 and still get a strictly positive quantity. The same holds w.r.t $x'$.
  
  Changing $y$ into $-\s$, setting $\psi(t,\s)\eqdef K(t,-\s)$ and reordering the rows, we see that the sign of $\det((\exp(-t_i\s_j))_{1\leq i,j\leq N})$ is $(-1)^{\frac{N(N-1)}{2}}$ if $t_1\leq \ldots \leq t_N$ and $s_1\leq \ldots \leq s_N$ (and the usual adaptation if some $t_i$ or $\s_j$ values are equal). 

Our first result in this direction is the following.

\begin{corollary}\label{coro:laplace}
  Let $\dompos=[c,+\infty)$ with $c\geq 0$, $\domobs\subset (0,+\infty)$ and $\Ps$ be a positive measure such that $\int_{\domobs} (1+|\s|)^{4\M}e^{-2c\s}\d\Ps(s)<+\infty$. 
    
    If $\psi(\pos,\s)=e^{-\pos\s}$ and $\card(\supp(\Ps))\geq 2\M$, the following holds.
    \begin{itemize}
      \item If $m_0=\sum_{i=1}^{\M}a_i\delta_{\pos_i}$, with $\pos_i\in\interop(\dompos)$ and $\amp_i>0$ for all $i$, then $m_0$ satisfies the Non-Degenerate Source Condition.
    \item If $\pos_0\in \interop(\dompos)$, then the point $\pos_0$ is $(2\M-1)$ Non Degenerate.
  \end{itemize}
\end{corollary}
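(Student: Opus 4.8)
The plan is to derive Corollary~\ref{coro:laplace} as a direct application of Theorem~\ref{thm:tsystem} via the Cauchy-Binet machinery of Propositions~\ref{prop:basiccompo} and~\ref{prop:basiccompoetaW}, using total positivity of the exponential kernel. First I would check the integrability hypothesis: since $\dompos=[c,+\infty)$ and $s\in\domobs\subset(0,+\infty)$, we have $|\psi^{(i)}(\pos,\s)|=|(-\s)^i e^{-\pos\s}|\leq (1+|\s|)^{2\M}e^{-c\s}$ for $i\leq 2\M$ (taking also $2\M$ derivatives in the $\detW$ setting and the products that appear in autocorrelations), so $\int_\domobs (1+|\s|)^{4\M}e^{-2c\s}\d\Ps(\s)<+\infty$ guarantees that $\varphi\in\kernel{2\M}$ with $\varphi(\pos):\s\mapsto e^{-\pos\s}\in\Ldeux(\domobs,\Ps)$, that $\Phi^*p$ and all its derivatives up to order $2\M$ vanish as $\pos\to+\infty$ (dominated convergence with $e^{-\pos\s}\to 0$), and that differentiation under the integral sign~\eqref{eq:derivint} is licit. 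I would also note $\pos_0,\pos_i\in\interop(\dompos)$ means $\pos_i>c\geq 0$, and $s>0$, which is what makes all the strict inequalities below work.

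The heart of the argument is sign-checking the determinants $\detAV$, $\detBV$ (resp. $\detAW$, $\detBW$). For $\detAV(\s_1,\ldots,\s_{2\M})$ with $\s_1<\ldots<\s_{2\M}$: its columns are $\psi(\pos_j,\cdot),\partial_1\psi(\pos_j,\cdot)$ arranged by $\pos_1<\ldots<\pos_\M$. This is exactly a confluent (``coalescent'') version of the Vandermonde-type determinant $\det((e^{-t_i s_j}))$ where each $\pos_j$ is repeated with multiplicity $2$ and the repeated rows replaced by the derivative in~\eqref{eq:replaceET}; but here the confluence is in the \emph{column} variable (the $\pos$ variable) rather than the row variable. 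By the extended-total-positivity statement recalled in the text (Karlin's~\cite{karlin1968total}, with the sign flip $y\mapsto -s$), $\det((e^{-t_i s_j})_{i,j})$ has sign $(-1)^{\binom{N}{2}}$ when both node sequences are increasing, $N=2\M$; after transposing (to match rows indexed by $\s_j$ and columns by the confluent $\pos$-nodes) and accounting for the row/column permutation needed to put $\detAV$ into the canonical coalescent form, one gets that $\detAV$ has a fixed, explicitly computable sign $\varepsilon_\M=(-1)^{?}$ on all of $\TSD$. The cleanest route is: the family $\s\mapsto(e^{-\pos_1\s},-\s e^{-\pos_1\s},\ldots,e^{-\pos_\M\s},-\s e^{-\pos_\M\s})$ evaluated at $\s_1<\ldots<\s_{2\M}$ — one reorders columns so that the ``confluent blocks'' in the $\pos$-variable become successive derivatives of $\pos\mapsto e^{-\pos\s}$, invokes extended total positivity in that variable, then reads off the parity of the column reordering. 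Since $\card(\supp\Ps)\geq 2\M$ gives $\Ps^{\otimes 2\M}(\TSD)>0$, Proposition~\ref{prop:basiccompo} then yields $\Gamma_\bpos$ full rank, and provided $\detBV(t,\cdot)$ carries the \emph{same} sign $\varepsilon_\M$ for a.e.\ $(\s_1,\ldots,\s_{2\M})\in\TSD$, it yields $\detV(t)>0$ for every $t\in\dompos$, hence the NDSC by Theorem~\ref{thm:tsystem}. (If $\varepsilon_\M=-1$ one simply replaces $\psi$ by $-\psi$, or equivalently observes that the product $\detAV\cdot\detBV$ is what enters~\eqref{eq:detV}, so it is the \emph{equality of signs} of $\detAV$ and $\detBV$ that matters, not the common sign itself.)

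For $\detBV(t,\s_1,\ldots,\s_{2\M})$: unwinding the definition~\eqref{eq:defBV}, after the normalising factor $\prod(t-\pos_i)^{-2}$ this is (a limit of, as one unfreezes the confluences) $\det$ of an $(2\M+1)\times(2\M+1)$ array whose first column is $(1,1,0,\ldots,1,0)^T$ and whose other columns are $\psi(\cdot,\s_j)$ evaluated (with the derivative pattern $0,1$) at the \emph{augmented} node set $t,\pos_1,\pos_1,\ldots,\pos_\M,\pos_\M$. Expanding along the first column, $\detBV$ is an alternating sum of $\detAV$-type determinants with the frozen node $t$ inserted and one of the ``$\psi(\pos_i,\cdot)$-rows'' deleted; the right way to see its sign is via Lemma~\ref{lem:vandermonde} applied with $u_0=1$ and $u_{2j-1},u_{2j}=\psi(\pos_j,\s_\bullet),\partial_1\psi(\pos_j,\s_\bullet)$ — that is, recognising $\detBV(t,\cdot)$ as the ``$\detV$ of the $\s$-evaluated system'' and $\detAV$ as its ``$\det\Gamma^*\Gamma$ analogue'', so the sign coherence is forced by the very structure Theorem~\ref{thm:tsystem} exploits, once we know the augmented $(2\M+1)$-node exponential determinant $\det((e^{-r_i \s_j}))$ with nodes $r_1=t<\text{(or interleaved with)}\ \pos_1<\ldots<\pos_\M$ is strictly of one sign. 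Crucially, because the exponential kernel is \emph{extended totally positive of all orders}, this augmented determinant (with $2\M+1$ distinct-or-confluent $\pos$-type nodes and $2\M+1$ distinct $s$-type nodes, the extra $s$-node being absorbed into the ``$1$'' column via $e^{-0\cdot\s}=1$, i.e.\ adding the node $\pos=0$) keeps a definite sign regardless of where $t$ falls among the $\pos_i$'s — this uniformity in $t$ is exactly what upgrades ``$\detV(t)\neq 0$'' to ``$\detV(t)>0$ for all $t$''. The $\detW$ case is identical with the single point $\pos_0$ of confluence $2\M-1$, using $\kernel{2\M}$ and Proposition~\ref{prop:basiccompoetaW}.

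The main obstacle is bookkeeping the sign and the confluence: extended total positivity as usually stated is about \emph{distinct} nodes with derivatives substituted for coalescing \emph{rows}, whereas here the coalescence is in the parameter $\pos$ (the columns of $\Gamma_\bpos$, hence the rows of $\detAV$ after transposition are the $\s_j$'s, distinct; the confluence sits in the $\pos$ variable), and additionally one must fold the leading ``$1$''-column into the exponential scheme by treating it as the node $\pos=0\in\overline{\dompos}$ — legitimate precisely because $\dompos=[c,+\infty)$ with $c\geq 0$, so $0$ lies at the (closure of the) left endpoint and the strict-positivity of $e^{-0\cdot s}=1$ does not degenerate the $T$-system. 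Getting the parity of the column/row permutation right, and checking that inserting $t$ anywhere in $[c,+\infty)$ cannot flip the sign (which is the whole content of ``extended T-system on $[c,+\infty)$''), is where all the care goes; everything else is dominated convergence and citing~\cite{karlin1968total,karlin1966tchebycheff}.
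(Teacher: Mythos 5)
Your skeleton is the paper's: check integrability so that $\varphi\in\kernel{2\M}$ and \eqref{eq:derivint} holds, then feed Theorem~\ref{thm:tsystem} with Propositions~\ref{prop:basiccompo} and~\ref{prop:basiccompoetaW}, using extended total positivity of the exponential kernel and the observation that only the sign of the product $\detAV\cdot\detBV$ matters. But the one step that actually constitutes the paper's proof is the step you leave unproved: that $\detAV$ and $\detBV$ carry the \emph{same} sign. This is not ``forced by the very structure Theorem~\ref{thm:tsystem} exploits'' --- the theorem needs $\detV(t)>0$, and single-signedness of each factor only gives $\detV(t)\neq 0$ with an undetermined sign. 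The paper settles it by an explicit parity computation: after reordering rows, extended total positivity gives $(-1)^{\M(2\M-1)}\detAV>0$ and, once the node $\s_0=0$ is adjoined, $(-1)^{\M(2\M+1)}\detBV(t,\cdot)>0$ for every $t$ (moving the row of $t$ past a $\pos_i$ crosses a \emph{pair} of rows, so the parity is independent of where $t$ sits, and the confluent extension covers $t=\pos_i$); since $\M(2\M-1)$ and $\M(2\M+1)$ have the same parity, the product is positive. Your fallback ``replace $\psi$ by $-\psi$'' cannot repair a hypothetical sign mismatch either, since both determinants have an even number of $\psi$-columns and are therefore unchanged. An alternative route would be continuity of $\detV$ plus a $t\to+\infty$ evaluation, as in the proof of Proposition~\ref{prop:fullgaussian}, but you do not carry that out; as written, the argument stops short of the NDSC.

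There is also a substantive confusion about the artificial node. In $\detBV$ the rows are the $2\M+1$ $\pos$-type nodes $t,\pos_1,\pos_1,\ldots,\pos_\M,\pos_\M$, so the column of ones must be read as the column of an extra \emph{sample} node, $\s_0=0$, since $\psi(x,0)=1$ and $\partial_1\psi(x,0)=0$; it is not ``the node $\pos=0\in\overline{\dompos}$'', and what legitimizes the trick is not $c\geq 0$ but the hypothesis $\domobs\subset(0,+\infty)$, which guarantees that $\s_0=0<\s_1<\cdots<\s_{2\M}$ are $2\M+1$ pairwise distinct nodes so that the augmented totally positive determinant is strictly of one sign. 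This misattribution is not cosmetic: as the remark following Corollary~\ref{coro:laplace} explains, if $0\in\supp(\Ps)$ then $\detBV(t,\s_1,\ldots,\s_{2\M})$ vanishes identically when $\s_1=0$, $\etaV\equiv 1$, and $2\M$ samples no longer suffice --- exactly the failure your reading of the hypotheses would not detect.
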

The proof is given in Appendix~\ref{sec-prooflap}.

\begin{rem}
  One might wonder why we impose $\domobs\subset (0,+\infty)$ rather than $\domobs\subset [0,+\infty)$. It is due to the artificial point $\s_0$ which is introduced. It is possible to choose $\domobs= [0,+\infty)$ and $0\in \supp(\Ps)$, but then $2\M$ measurements are not sufficient anymore, since for $\s_1=0=\s_0$, the function $t\mapsto \detBV(t,\s_1,\ldots,\s_{2\M})$ vanishes identically. As a result, $\etaV$ (resp. $\etaW$) is identically equal to $1$ and the (NDSC) does not hold.
      
      In that case, the conclusion of Proposition~\ref{coro:laplace} still holds with the stronger  condition that $\card(\supp(\Ps))\geq 2\M+1$; everything happens as if the measurement at $\s=0$ were ignored.
\end{rem}

% !TEX root = ../SiimsLaplace.tex

\subsection{Gaussian kernel $\psi(\pos,\s)=e^{-(\pos-\s)^2}$}\label{sec:gauss}
Now, we deal with the more involved case of the Gaussian kernel. As noted in~\cite{schiebinger2015superresolutionjournal}, a family of the form  $\{1,e^{-(\cdot-\pos_1)^2},\ldots,e^{-(\cdot-\pos_\M)^2 }\}$ does not form a $T$-system, hence the sufficient condition edicted in~\cite{deCastro-exact2012} for identifiability does not hold. For that reason, the authors of~\cite{schiebinger2015superresolutionjournal} were led to introduce a weighted total variation (equivalent to some renormalization of the impulse responses) to provide identifiability. 

In our context, $\detBV(t,\s_1,\ldots,\s_{2M})>0$ does not always hold, hence it is not possible to assert that the conditions of Theorem~\ref{thm:tsystem} hold regardless of the sampling $(\domobs,\Ps)$. In fact, the experiments shown in Figures~\ref{fig:gauss} and~\ref{fig:confinedgauss} indicate that for some samplings measures $\Ps$, the problem is not support stable (is not clear to us whether or not $m_0$ is identifiable in such cases).

However, as described below, it is possible to assert support stability in two cases which are worthy of interest. The first one is the regular convolution with a Gaussian kernel, without sampling, or with some sampling which approximates the Lebesgue measure (in a suitable weak sense). The second one is when the sampling is confined into a small interval.

\subsubsection{Full sampling, or almost}
When fully observing the convolution of $m_0$ with some Gaussian kernel, \ie when
\begin{align}
  \phi(\x): \RR\rightarrow \LD(\RR),\quad  \s\mapsto \int_{\RR} e^{-(\pos-\s)^2}\d m_0(\pos), 
\end{align}
the operator $\Phi$ is injective, hence it is clear that $m_0$ is the unique solution to~\eqref{eq:bpursuit}. One might wonder, however, whether this recovery is support stable.

\begin{proposition}[Fully sampled Gaussian kernel]\label{prop:fullgaussian}
  Let $\Ps$ be the Lebesgue measure on $\RR$, $\dompos=\RR$ and $\psi(\pos,\s)\eqdef e^{-(\pos-\s)^2}$. Then
    \begin{itemize}
    \item Any measure $m_0=\sum_{i=1}^{\M}a_i\delta_{\pos_i}$, with $\pos_i\in \RR$ and $\amp_i>0$ for all $i$, satisfies the Non-Degenerate Source Condition.
    \item For any point $\pos_0\in\RR$, then the point $\pos_0$ is $(2\M-1)$ Non Degenerate.
  \end{itemize}
\end{proposition}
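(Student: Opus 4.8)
To prove Proposition~\ref{prop:fullgaussian} I would invoke Theorem~\ref{thm:tsystem}: since $\dompos=\RR$, it suffices to establish that $\detV(t)>0$ for all $t\in\RR$ (first item) and $\detW(t)>0$ for all $t\in\RR$ (second item), and to check the full-rank hypotheses. A Gaussian integration gives
\begin{align*}
  \Co(\pos,\pos')=\int_{\RR}e^{-(\pos-\s)^2}e^{-(\pos'-\s)^2}\d\s=\sqrt{\tfrac{\pi}{2}}\;e^{-(\pos-\pos')^2/2},
\end{align*}
so the family~\eqref{eq:uplain} entering $\detV$ is, up to the positive constant $\sqrt{\pi/2}$, $\bigl(1,e^{-(\cdot-\pos_1)^2/2},(\cdot-\pos_1)e^{-(\cdot-\pos_1)^2/2},\ldots\bigr)$, and the one entering $\detW$ is $\bigl(1,e^{-(\cdot-\pos_0)^2/2},\ldots,\tfrac{\d^{2\M-1}}{\d(\pos')^{2\M-1}}e^{-(\cdot-\pos')^2/2}\big|_{\pos'=\pos_0}\bigr)$. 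As the text recalls, $\{1,e^{-(\cdot-\pos_1)^2},\ldots\}$ is \emph{not} a $T$-system, so the route of Proposition~\ref{prop:basiccompo} (positivity of $\detBV$, $\detBW$) is unavailable; I would argue directly on these families via Lemma~\ref{lem:vandermonde}.

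The heart of the proof is to show that the family above is an extended Chebyshev system on $\RR$: every nontrivial element of its linear span has at most $2\M$ zeros, counted with multiplicity. Multiplying every function by the strictly positive weight $w(\pos)\eqdef e^{\pos^2/2}$ leaves the zero set of every element of the span unchanged, and after absorbing positive constants and doing column operations inside each confluence block, $w\cdot\mathrm{span}(\uplain_0,\ldots,\uplain_{2\M})$ becomes the span of $\{e^{\pos^2/2}\}$ together with $\ker L$, where $L\eqdef\prod_{i=1}^\M\bigl(\tfrac{\d}{\d\pos}-\pos_i\bigr)^2$ (resp.\ $L\eqdef\bigl(\tfrac{\d}{\d\pos}-\pos_0\bigr)^{2\M}$ when dealing with $\detW$). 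Here $\ker L$ is the classical $2\M$-dimensional extended Chebyshev system of exponential polynomials, and $L$ is a product of $2\M$ disconjugate first-order operators, so by the generalized Rolle theorem $Lg$ vanishes at least $N-2\M$ times, with multiplicity, whenever $g$ does $N$ times. A direct computation yields $L\,e^{\pos^2/2}=q(\pos)\,e^{\pos^2/2}$ with $q$ a polynomial which is \emph{positive on all of $\RR$}: for $\M=1$ one finds $q(\pos)=1+(\pos-\pos_1)^2$; for the $\detW$ family $q(\pos)=p_{2\M}(\pos-\pos_0)$, where $p_0=1$, $p_{k+1}(u)=p_k'(u)+u\,p_k(u)$, so each $p_{2m}$ has nonnegative coefficients and positive constant term and is therefore positive; and for general $\M$ one checks positivity by induction on the number of factors of $L$. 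Granting this, write a generic nontrivial element as $f=\lambda e^{\pos^2/2}+P$ with $P\in\ker L$: if $\lambda=0$ then $f=P$ has at most $2\M-1$ zeros; if $\lambda\neq0$ then $Lf=\lambda\,q\,e^{\pos^2/2}$ is nowhere zero, so $f$ cannot have $2\M+1$ zeros. This also settles the full-rank hypotheses, since the subfamily $(\uplain_1,\ldots,\uplain_{2\M})$ is itself an extended Chebyshev system, so its generalized Wronskian at the relevant confluent configuration — which equals $\det(\Gamma_{\bpos}^*\Gamma_{\bpos})$, resp.\ $\det(\Fdn^*\Fdn)$ — is nonzero.

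It remains to fix the sign of $\detV$ (and of $\detW$). The extended Chebyshev property prevents the numerator of $\detV(t)$ — for $t\in\RR\setminus\{\pos_i\}$, or the continuous extension~\eqref{eq:Dxi} at $t=\pos_i$ — from vanishing, since that would produce a nontrivial element of the span with $2\M+1$ zeros counted with multiplicity; hence $\detV$, continuous on $\RR$ by Lemma~\ref{lem:vandermonde}, never vanishes and thus has constant sign on the connected set $\RR$. Letting $t\to+\infty$, $\uplain_0(t)=1$ while $\uplain_k(t)\to0$ for $k\geq1$ (Gaussian decay beats the polynomial factors), so the first row of the defining matrix tends to $(1,0,\ldots,0)$; a cofactor expansion along it shows the determinant tends to $\det(\Gamma_{\bpos}^*\Gamma_{\bpos})>0$, while the prefactor $2/\prod_i(t-\pos_i)^2$ stays positive. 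Hence $\detV(t)>0$ for $t$ large, and therefore for all $t\in\RR$; the identical argument, with $\det(\Fdn^*\Fdn)$ and the prefactor $(2\M)!/(t-\pos_0)^{2\M}$, gives $\detW(t)>0$ on $\RR$. Theorem~\ref{thm:tsystem} then yields both assertions.

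The main obstacle is precisely the adjunction of the constant $\uplain_0=1$ to the Gaussian atoms: these do not form a $T$-system (exactly the reason~\cite{schiebinger2015superresolutionjournal} had to reweight), so the basic composition formula alone does not apply, and the whole argument hinges on the positivity, on all of $\RR$, of the polynomial $q$ defined by $L\,e^{\pos^2/2}=q\,e^{\pos^2/2}$ — equivalently, on the fact that $e^{\pos^2/2}$ extends the exponential-polynomial system into a larger extended Chebyshev system. Once this delicate point is secured, fixing the sign (via $t\to+\infty$) and the full-rank hypotheses are routine.
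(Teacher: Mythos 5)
Your proposal follows the same skeleton as the paper's proof: reduce to Theorem~\ref{thm:tsystem}, multiply the family $(\uplain_0,\ldots,\uplain_{2\M})$ by the weight $e^{\pos^2/2}$ so that its span becomes the exponential polynomials $\ker L$ plus $\RR\,e^{\pos^2/2}$, bound the number of zeros (with multiplicity) of a nontrivial element of that span by $2\M$, get full rank of $\Gamma_{\bpos}$ (resp.\ $\Fdn$) from the extended total positivity of the Gaussian kernel, deduce that the continuous extensions of $\detV$ and $\detW$ never vanish, and fix their sign by letting $t\to+\infty$ and expanding along the first row. The genuine divergence is the zero-counting step: the paper simply invokes Lemma~\ref{lem:polygauss} (Lemma~2.7 of~\cite{schiebinger2015superresolutionjournal}) when $b_0\neq 0$ and extended total positivity when $b_0=0$, whereas you re-derive the bound from scratch via a disconjugacy/Rolle argument with $L=\prod_{i=1}^\M\bigl(\tfrac{\d}{\d\pos}-\pos_i\bigr)^2$, reducing everything to the positivity on $\RR$ of the polynomial $q$ defined by $L\,e^{\pos^2/2}=q\,e^{\pos^2/2}$. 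That self-contained route is legitimate and would make the proof independent of the cited lemma.

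However, the positivity of $q$ for a general configuration $\pos_1<\ldots<\pos_\M$ is exactly where your argument is not actually carried out. The confluent case is fine (though note you also need the evenness of $p_{2\M}$, which holds by the parity of the recursion; nonnegative coefficients and a positive constant term only give positivity on $[0,+\infty)$). But ``positivity by induction on the number of factors of $L$'' does not go through as stated: one conjugated pair acts by $q\mapsto q''+2(\pos-a)q'+\bigl(1+(\pos-a)^2\bigr)q$, and positivity alone is not preserved — for instance $q(\pos)=\varepsilon+(\pos-3)^2$ with $a=0$ gives the value $-1+5\varepsilon<0$ at $\pos=2$ — so an induction carrying only ``$q>0$'' fails and a stronger invariant would be needed. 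The claim itself is true and admits a one-line proof: writing $e^{\pos^2/2}=\tfrac{1}{\sqrt{2\pi}}\int_{\RR}e^{\pos\s-\s^2/2}\,\d\s$ and differentiating under the integral yields $L\,e^{\pos^2/2}=\tfrac{1}{\sqrt{2\pi}}\int_{\RR}\prod_{i=1}^{\M}(\s-\pos_i)^2\,e^{\pos\s-\s^2/2}\,\d\s>0$. With that fix (or by citing Lemma~\ref{lem:polygauss}, as the paper does), the remaining steps of your argument — the Rolle count, the bound $2\M-1$ when $\lambda=0$, the nonvanishing of the extensions \eqref{eq:Dxi}, the limit $t\to+\infty$ identifying the leading minor with the Gram determinant, and the identical treatment of $\detW$ — are correct and complete.
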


A proof of Proposition~\ref{prop:fullgaussian} is given in Appendix~\ref{sec:prooffullgauss}.
Let us note that this proposition is a consequence of the following technical lemma proved in~\cite{schiebinger2015superresolutionjournal}.

\begin{lemma}[\protect{\cite[Lemma 2.7]{schiebinger2015superresolutionjournal}}]\label{lem:polygauss}
  Let $d_1,\ldots d_r\in \NN$ and $\s_1<\ldots <\s_r$ be real numbers. The function 
  \begin{align}
    \t\mapsto \sum_{i=1}^r (a_{i,0}+a_{i,1}\t +\cdots +a_{i,(2d_i-1)}\t^{(2d_i-1)})e^{\s_it} + e^{t^2}
  \end{align}
  has at most $2(d_1+\cdots +d_r)$ zeros, counting multiplicities.
\end{lemma}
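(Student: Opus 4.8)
The plan is to count the real zeros (with multiplicity) of $f(t)\eqdef\sum_{i=1}^r P_i(t)e^{\s_i t}+e^{t^2}$ (with $\deg P_i\le 2d_i-1$) by rewriting $f$, up to the nonvanishing factor $e^{-t^2}$, as the Gaussian convolution of a simple signed measure, and then invoking the variation-diminishing property of the Gaussian kernel.

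First I would dispose of the trivial case $P_1=\dots=P_r=0$, where $f=e^{t^2}$ has no zero. Otherwise set $g(t)\eqdef f(t)e^{-t^2}=1+\sum_i P_i(t)e^{\s_i t-t^2}$, so that $g$ and $f$ have the same zeros (with multiplicity) and $g(t)\to1$ as $t\to\pm\infty$; in particular the zeros of $g$ lie in a bounded set. Completing the square gives $e^{\s_i t-t^2}=e^{\s_i^2/4}e^{-(t-\mu_i)^2}$ with $\mu_i\eqdef\s_i/2$ (so $\mu_1<\dots<\mu_r$), and since $\mathrm{span}\{t^je^{-(t-\mu)^2}:0\le j\le n\}=\mathrm{span}\{\partial_t^k e^{-(t-\mu)^2}:0\le k\le n\}$, one may write $P_i(t)e^{\s_i t-t^2}=\sqrt\pi\,(G*\nu_i)$ where $G(u)\eqdef\tfrac1{\sqrt\pi}e^{-u^2}$ and $\nu_i=\sum_{k=0}^{n_i}b_{i,k}\,\delta^{(k)}_{\mu_i}$ with $n_i\eqdef\deg P_i$ and $b_{i,n_i}\neq0$. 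Using $1=(G*\mathrm{Leb})(t)$ one then obtains $g=G*\mu$ with $\mu\eqdef\mathrm{Leb}+\sqrt\pi\sum_{i=1}^r\nu_i$, a signed measure/distribution that coincides with the positive Lebesgue measure away from the finite set $\{\mu_1<\dots<\mu_r\}$.

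Next I would invoke the variation-diminishing property of the Gaussian kernel (Schoenberg's theorem, in the form valid for the strictly totally positive kernel $e^{-(x-y)^2}$): $Z(G*\mu)\le S^-(\mu)$, where $Z$ counts zeros with multiplicity and $S^-(\mu)$ is the number of sign changes of $\mu$. It then remains to prove $S^-(\mu)\le 2D\eqdef 2(d_1+\dots+d_r)$. Since $\mu$ equals $\mathrm{Leb}>0$ on each of the intervals $(-\infty,\mu_1)$, $(\mu_i,\mu_{i+1})$, $(\mu_r,+\infty)$, every sign change of $\mu$ occurs near some $\mu_i$, and the "blocks" at distinct $\mu_i$ do not interact. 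Near $\mu_i$ the singular part dominates, so the local sign pattern of $\mu$ is that of $\nu_i$ (for instance, after mollifying $\nu_i$ by a small Gaussian, $\nu_i$ becomes a polynomial of degree exactly $n_i$ times a narrow Gaussian). Hence block $i$ produces $n_i$ internal sign changes, framed by the signs $\sigma_L,\sigma_R$ of that polynomial at $\mp\infty$; together with the two transitions to the surrounding positive background it contributes $n_i+1-\tfrac{\sigma_L+\sigma_R}{2}$ sign changes. The crucial observation is that $\sigma_L\sigma_R=(-1)^{n_i}$ (leading versus lowest-order behaviour of a degree-$n_i$ polynomial): when $n_i=2d_i-1$ is odd this forces $\sigma_L=-\sigma_R$, so exactly one transition creates a change and the block contributes $n_i+1=2d_i$; when $n_i<2d_i-1$ one checks the contribution is at most $2d_i$ as well (the parity of $2d_i-1$ gives $n_i+2\le 2d_i$ in the even case). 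Summing over $i$ yields $S^-(\mu)\le\sum_i 2d_i=2D$, hence $Z(f)=Z(g)\le 2D$.

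The main difficulty is the Gaussian term $e^{t^2}$: it lies in no finite-dimensional space stable under differentiation, and the naive reductions — applying $D-2t$, which annihilates $e^{t^2}$ but raises every $\deg P_i$, or peeling the exponentials off by Rolle's theorem together with Pólya's bound on exponential sums, which lets the Gaussian's polynomial coefficient grow — all overshoot by $r$. The variation-diminishing argument succeeds precisely because it keeps the nonzero coefficient of $e^{t^2}$ (equivalently, the positive Lebesgue background) and concentrates each $P_i$ at a single point $\mu_i$, where the parity identity $\sigma_L\sigma_R=(-1)^{n_i}$ can be used; this is exactly why the admissible degrees are odd ($\le 2d_i-1$) and the bound is $2(d_1+\dots+d_r)$ rather than $2(d_1+\dots+d_r)+r$. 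The only genuinely technical point is to make $S^-(\mu)$ precise (mollify $\mu$ by a bump, truncate the Lebesgue part) and to transfer the estimate through the semicontinuity $Z(G*\mu)\le\liminf_\varepsilon Z(G*\mu_\varepsilon)$, which is routine.
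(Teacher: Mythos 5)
This lemma is not proved in the paper at all: it is imported verbatim from Schiebinger \emph{et al.} (their Lemma 2.7), so there is no internal proof to compare against, and your argument has to stand on its own. Your skeleton is attractive and does explain the sharp constant: writing $f(t)e^{-t^2}=(G*\mu)(t)$ with $\mu=\mathrm{Leb}+\sum_i\nu_i$, $\nu_i$ a combination of $\delta_{s_i/2}^{(k)}$, $k\le n_i=\deg P_i$, and then bounding zeros by sign changes is a legitimate plan. But two steps are asserted rather than proved, and the first is the heart of the matter. The per-block count is not what you say it is: after mollification the block reads $1+B_{i,\varepsilon}+(\text{tails})$ with $B_{i,\varepsilon}=(\text{degree-}n_i\text{ polynomial})\times(\text{narrow Gaussian})\times(\text{huge factor})$, and its sign changes are the crossings of the \emph{level} $-1$ by $B_{i,\varepsilon}$, not the sign changes of the local polynomial. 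That polynomial may have fewer than $n_i$ real roots, and $1+B_{i,\varepsilon}$ can vanish where $B_{i,\varepsilon}$ does not change sign (already for $n_i=1$, $B=-Mve^{-v^2}$ gives two zeros of $1+B$, both on the same side of the root), so ``block $i$ produces $n_i$ internal sign changes, framed by $\sigma_L,\sigma_R$'' is not an upper-bound argument as written. The cap $2d_i$ per block is true, but it has to be earned, e.g.\ by combining (i) the parity fact that the number of sign changes inside a small window around $\mu_i$ is even, because the mollified density is positive at both window ends, with (ii) the bound $n_i+2$ on the number of zeros in the window, coming from the at most $n_i+1$ critical points of a (degree-$n_i$ polynomial)$\times$Gaussian; for odd $n_i\le 2d_i-1$ parity then forces at most $n_i+1\le 2d_i$, and for even $n_i\le 2d_i-2$ one has $n_i+2\le 2d_i$ directly. (Alternatively one can exploit that the Gaussian-mollified $\nu_i$ is a small perturbation of a Hermite polynomial times a Gaussian, hence has $n_i$ simple real roots, but then the transversality of each level crossing and the exponentially small cross-block tails still have to be controlled before letting $\varepsilon\to0$.) None of this is in your text, and it is exactly where the ``$+r$'' overshoot you rightly identify is avoided.

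The second gap is the variation-diminishing input itself. Schoenberg's classical theorem diminishes \emph{sign changes}; the lemma you are proving (and the way it is used in the paper, where derivative rows appear in the determinants) requires zeros counted \emph{with multiplicities}. That stronger statement, $Z(G*\mu)\le S^-(\mu)$ with $Z$ counting multiplicities, holds because $e^{-(x-y)^2}$ is extended totally positive, but it is a theorem of Karlin's ETP theory, stated for suitable densities/measures, and you are applying it to a sigma-finite measure (Lebesgue plus compact bumps), then transferring the bound to the distributional $\mu$ through Hurwitz-type semicontinuity. The semicontinuity step is indeed routine (everything is entire), but the precise multiplicity-counting VD theorem must be quoted and checked in this setting; as cited, the argument does not yet close.
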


As a consequence of Proposition~\ref{prop:fullgaussian}, we also obtain non-degeneracy results for measures with finite total mass which are sufficiently close to the uniform Lebesgue measure. More precisely, let us assume that
\begin{align}\label{eq:cvgauss}
  \lim_{n\to+\infty}\max_{0\leq k,\ell\leq 2\M} \sup_{x\in \dompos} \left|\int_{\RR} x^{k}\s^{\ell}e^{-(\pos-\s)^2-(\pos_i-\s)^2}\d\Pns(\s)- \int_{\RR}  x^{k}\s^{\ell}e^{-(\pos-\s)^2-(\pos_i-\s)^2}\d\s\right|=0.
\end{align}

\begin{proposition}\label{prop:samplebesque}
  Let $\dompos\subseteq\RR$, $(\Pns)_{n\in\NN}$ be a sequence of positive measures with finite total mass such that~\eqref{eq:cvgauss} holds.
  Let  $m_0=\sum_{i=1}^{\M}a_i\delta_{\pos_i}$, with $\pos_i\in \interop(\dompos)$ and $\amp_i>0$ for all $i$ (resp. let $\pos_0\in \interop(\dompos)$).
  
Then, provided $n$ is large enough, 
 \begin{enumerate}
   \item[i)] 
$m_0$ satisfies the  Non-Degenerate Source Condition,
\item[ii)]
The point $\pos_0$ is $2\M-1$ Non-Degenerate,
 \end{enumerate}
for the reconstruction framework defined by $\Pns$ and $\psi(\pos,\s)= e^{-(\pos-\s)^2}$.
\end{proposition}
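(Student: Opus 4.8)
The plan is to deduce Proposition~\ref{prop:samplebesque} from Proposition~\ref{prop:fullgaussian} by a continuity/perturbation argument, using Theorem~\ref{thm:tsystem} as the bridge that converts both the hypothesis (NDSC for the Lebesgue measure) and the conclusion (NDSC for $\Pns$) into statements about the positivity of the rescaled determinants $\detV$ and $\detW$. First I would observe that, for the reconstruction framework attached to a sampling measure $\nu$, the autocorrelation function is $\Co_\nu(x,x')=\int_\RR e^{-(x-s)^2}e^{-(x'-s)^2}\d\nu(s)$, and that the entries of the matrices defining $\detV$ and $\detW$ (see~\eqref{eq:detV}, \eqref{eq:detW}, \eqref{eq:uplain}) are, up to differentiation in the first argument, of the form $\int_\RR P(x,s)\,e^{-(x-s)^2-(\pos_i-s)^2}\d\nu(s)$ where $P$ is a polynomial in $(x,s)$ of degree at most $2\M$ in each variable (the polynomial factors coming from differentiating the Gaussians). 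Hence the hypothesis~\eqref{eq:cvgauss} says exactly that each such entry, together with the rescaling factor $2/\prod_i(t-\pos_i)^2$ (resp. $(2\M)!/(t-\pos_0)^{2\M}$) handled via the Taylor-expansion extension of Lemma~\ref{lem:vandermonde}, converges uniformly in $t\in\dompos$ to the corresponding entry for the Lebesgue measure.

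Next I would argue that the maps $\nu \mapsto \detV^{(\nu)}(t)$ and $\nu\mapsto \detW^{(\nu)}(t)$ are continuous for this mode of convergence, uniformly in $t\in\dompos$: a determinant is a polynomial in its entries, and the entries converge uniformly in $t$ and stay bounded, so $\sup_{t\in\dompos}|\detV^{(\Pns)}(t)-\detV^{(\leb)}(t)|\to 0$ and likewise for $\detW$. By Proposition~\ref{prop:fullgaussian} and Theorem~\ref{thm:tsystem}, the limiting functions satisfy $\detV^{(\leb)}(t)>0$ and $\detW^{(\leb)}(t)>0$ for all $t\in\dompos$. To pass from "positive at the limit" to "positive for $n$ large," I need a uniform lower bound $\inf_{t\in\dompos}\detV^{(\leb)}(t)>0$. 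This is immediate when $\dompos$ is compact by continuity; when $\dompos=\RR$ (or is unbounded) one has to check the behaviour as $t\to\pm\infty$. Here I would use that $u_0=1$ while $u_1,\dots,u_{2\M}$ are finite linear combinations of $s\mapsto \int e^{-(t-s)^2-(\pos_j-s)^2}(\cdots)\d s$, which decay like Gaussians in $t$; expanding the determinant along the first column, $\detV^{(\leb)}(t)$ tends to a fixed nonzero minor of the $\Gamma$-Gram matrix (which is nonzero since $\Gamma_{\bpos}$ has full rank), so $\detV^{(\leb)}$ is bounded away from $0$ at infinity and hence uniformly on $\dompos$. The same reasoning applies to $\detW^{(\leb)}$ using $\Fdn$. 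Then for $n$ large, $\detV^{(\Pns)}(t)\ge \inf_t\detV^{(\leb)}(t)-\sup_t|\detV^{(\Pns)}-\detV^{(\leb)}|>0$ for all $t$, and similarly for $\detW$; also the full-rank hypotheses on $\Gamma_{\bpos}$ and $\Fdn$ transfer by the same perturbation argument (their Gram determinants converge to the strictly positive Lebesgue values, via Proposition~\ref{prop:basiccompo}/\ref{prop:basiccompoetaW} or directly). Applying Theorem~\ref{thm:tsystem} in the reverse direction then yields the NDSC and the $(2\M-1)$-nondegeneracy for $\Pns$.

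The main obstacle, I expect, is the uniformity in $t$ over a non-compact $\dompos$: one must be careful that~\eqref{eq:cvgauss} is stated with a $\sup_{x\in\dompos}$ precisely to give uniform control of the entries, but the rescaling factors $\prod_i (t-\pos_i)^{-2}$ and $(t-\pos_0)^{-2\M}$ blow up near the $\pos_i$'s, so the "entries" one must control are really the Taylor-remainder quotients near those points, and one needs~\eqref{eq:cvgauss} to hold for the derivatives in $x$ as well — which it does, since differentiating $e^{-(x-s)^2}$ in $x$ only produces more polynomial factors $x^k s^\ell$ of the type already allowed, and these higher-order $x$-derivatives are what control the continuous extensions in Lemma~\ref{lem:vandermonde}. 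A clean way to package this is to note that $\detV^{(\nu)}(t)$ and $\detW^{(\nu)}(t)$ are themselves finite linear combinations (with bounded coefficients, for $t$ ranging over $\dompos$) of the quantities appearing in~\eqref{eq:cvgauss} and their $x$-derivatives, together with the smooth rescaling, so uniform convergence of the determinants follows directly; the behaviour at infinity is then the only genuinely separate point, handled by the first-column expansion as above.
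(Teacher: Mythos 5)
Your reduction of the problem to the positivity of the rescaled determinants, transferring $\detV>0$ and $\detW>0$ from the Lebesgue framework to $\Pns$ via Theorem~\ref{thm:tsystem} in both directions, is a legitimate strategy and differs from the paper's proof (which never returns to the determinants: it shows $({\Gamma_{\bpos}^n}^*\Gamma_{\bpos}^n)^{-1}\to(\Gamma_{\bpos}^*\Gamma_{\bpos})^{-1}$, deduces from~\eqref{eq:cvgauss} the uniform convergence of $\etaVn$ and of its first two derivatives to those of $\etaV$, and transfers the three defining properties of the NDSC directly). However, there is a genuine gap at the step where you claim that $\detV$ for the Lebesgue measure is ``bounded away from $0$ at infinity and hence uniformly on $\dompos$''. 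The rescaled determinant carries the prefactor $2/\prod_{i=1}^{\M}(t-\pos_i)^2$: as $\abs{t}\to\infty$ the correlation entries of the first row vanish and the determinant in~\eqref{eq:detV} (before division) converges to $\det(\Gamma_{\bpos}^*\Gamma_{\bpos})>0$, so that $\detV(t)$ behaves like $2\det(\Gamma_{\bpos}^*\Gamma_{\bpos})/t^{2\M}$ and tends to $0$ --- this is precisely the computation in the proof of Proposition~\ref{prop:fullgaussian}, which yields positivity but no positive lower bound. Hence for unbounded $\dompos$ (e.g.\ $\dompos=\RR$, the case the proposition is meant to cover) one has $\inf_{t\in\dompos}\detV(t)=0$, and your concluding inequality $\detV^{(n)}(t)\ge\inf_t\detV(t)-\sup_t\abs{\detV^{(n)}(t)-\detV(t)}$ proves nothing on the tails: uniform convergence to a positive function that vanishes at infinity does not force the approximants to be positive there. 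The same objection applies verbatim to $\detW$.

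The gap is repairable, but the tails must be handled separately and uniformly in $n$: for $\abs{t}\ge T$ discard the (positive) rescaling factor and argue on the unrescaled determinant, whose first-row entries $\Con(t,\pos_i)$, $\partial_2\Con(t,\pos_i)$ can be made uniformly small for $n$ large by combining the Gaussian decay of the Lebesgue correlation with~\eqref{eq:cvgauss} (the weights $x^k$ there are exactly what gives decay of the difference, e.g.\ $\abs{\Con(t,\pos_i)-\Co(t,\pos_i)}\le\abs{t}^{-1}\sup_{x\in\dompos}\abs{x\int \s^{\ell}e^{-(x-\s)^2-(\pos_i-\s)^2}\d(\Pns-\mathcal{L})(\s)}$ with $\mathcal{L}$ the Lebesgue measure); the determinant is then close to $\det({\Gamma_{\bpos}^n}^*\Gamma_{\bpos}^n)\to\det(\Gamma_{\bpos}^*\Gamma_{\bpos})>0$, hence positive on the tails, while on a compact set containing neighborhoods of the $\pos_i$ your uniform-convergence argument (with the Taylor-quotient control you sketch near the spikes) applies. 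This is essentially why the paper argues on the precertificate instead: it establishes $\sup_{x\in\dompos}\abs{\etaVn{}^{(k)}(x)-\etaV^{(k)}(x)}\to 0$ for $k=0,1,2$ and uses that $\etaV\le 1-\varepsilon$ outside small neighborhoods of the spikes (true because $\etaV$ decays at infinity) together with $\etaV''<0$ inside them, a formulation in which the behaviour at infinity causes no difficulty.
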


A proof of Proposition~\ref{prop:samplebesque} is given in Appendix~\ref{sec:proofsamplebesque}.

\subsubsection{Confined sampling} % ou confined
Now, we return to the framework of Section~\ref{sec:cauchy} with some sampling measure $\Ps$. We show that if the support of $\Ps$ is confined in some sufficiently small interval, stable recovery of the support is possible regardless of the spikes separation.

\begin{proposition}\label{prop:confined}
  Let $\dompos\subseteq \RR$ be an interval, $\sst\in \RR$ and $\Ps$ be a positive measure such that $\card(\supp(\Ps))\geq 2\M$. There exists $r>0$ such that if $\supp \Ps\subset [\sst-r,\sst+r]$, then for any measure  $m_0=\sum_{i=1}^{\M}a_i\delta_{\pos_i}$, with $\pos_i\in \interop(\dompos)$ and $\amp_i>0$ for all $i$ (resp. for any $\pos_0\in \interop(\dompos)$),
 \begin{enumerate}
   \item[i)] 
$m_0$ satisfies the  Non-Degenerate Source Condition,
\item[ii)]
The point $\pos_0$ is $2\M-1$ Non-Degenerate,
 \end{enumerate}
in the reconstruction framework defined by $\Ps$ and $\psi(\pos,\s)= e^{-(\pos-\s)^2}$.
\end{proposition}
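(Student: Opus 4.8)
The plan is to combine the characterization of the Non-Degenerate Source Condition given in Theorem~\ref{thm:tsystem} with the Cauchy--Binet reductions of Propositions~\ref{prop:basiccompo} and~\ref{prop:basiccompoetaW}, and to view the confined regime as a perturbation of an explicit ``single cluster'' limiting framework. Since the Gaussian kernel is $\Cder{\infty}$, the regularity hypotheses of Theorem~\ref{thm:tsystem} hold for free, and by Propositions~\ref{prop:basiccompo}--\ref{prop:basiccompoetaW} it suffices to find $r>0$ such that, whenever $\supp\Ps\subset[\sst-r,\sst+r]$, one has $\detAV(\s_1,\ldots,\s_{2\M})>0$ (resp.\ $\detAW>0$) for all $\s_1<\ldots<\s_{2\M}$ in $\supp(\Ps)$, and $\detBV(t,\s_1,\ldots,\s_{2\M})>0$ (resp.\ $\detBW(t,\ldots)>0$) for all $t\in\dompos$ and $\Ps^{\otimes 2\M}$-a.e.\ $(\s_1,\ldots,\s_{2\M})\in\TSD$. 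I will describe the argument for $\detAV,\detBV$; the case of $\detAW,\detBW$ is the same, with the single point $\pos_0$ playing the role of the $\pos_i$'s. For the $A$-determinants no smallness of $r$ is needed: writing $e^{-(\pos-\s)^2}=e^{-\pos^2}e^{2\pos\s}e^{-\s^2}$ and carrying out, block by block, the same row and column operations as in Section~\ref{sec:laplace} (factor the Gaussian prefactors out of rows and columns, then reduce each confluent pair to $\pa{e^{2\pos_i\s_j},\,2\s_j\,e^{2\pos_i\s_j}}$), one sees that $\detAV$ equals, up to a strictly positive factor, a confluent Vandermonde determinant of the extended totally positive kernel $(\pos,\s)\mapsto e^{2\pos\s}$; hence $\detAV>0$ as soon as $\s_1<\ldots<\s_{2\M}$ and $\pos_1<\ldots<\pos_\M$, so in particular $\Gamma_{\bpos}$ (resp.\ $\Fdn$) has full rank.

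Next I would identify the limit of $\detBV$ when all the samples collapse to $\sst$. Since $(\partial_2)^{k}\psi(\pos,\sst)$ equals a polynomial of degree $k$ in $\pos$ times $e^{-(\pos-\sst)^2}$, expanding $\detBV(t,\s_1,\ldots,\s_{2\M})$ multilinearly in its $\s$-columns and Taylor-expanding each column around $\sst$ shows that $\detBV$ is divisible by $\prod_{1\le j<k\le 2\M}(\s_k-\s_j)$, and that after dividing by this factor it converges, as $\s_1,\ldots,\s_{2\M}\to\sst$, to a strictly positive constant times the quantity $\detV^{\infty}(t)$ obtained from~\eqref{eq:detV} for the choice $(\uplain_0,\ldots,\uplain_{2\M})=\pa{1,\,e^{-(\cdot-\sst)^2},\,(\cdot-\sst)e^{-(\cdot-\sst)^2},\ldots,(\cdot-\sst)^{2\M-1}e^{-(\cdot-\sst)^2}}$. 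The certificate $\etaV^{\infty}$ of this limiting framework lies in the space of functions $e^{-(\cdot-\sst)^2}q$ with $q$ a real polynomial of degree $\le 2\M-1$, so $1-\etaV^{\infty}$ has the form $1+e^{-(\cdot-\sst)^2}q$ with $\deg q\le 2\M-1$; exactly as in the proof of Proposition~\ref{prop:fullgaussian}, Lemma~\ref{lem:polygauss} bounds the number of its zeros, counting multiplicities, by $2\M$, and since it has a double zero at each of the $\M$ points $\pos_i$, it can have no further zero and no sign change, so, tending to $1>0$ at $\pm\infty$, it is $>0$ off $\{\pos_i\}$ with $(\etaV^{\infty})''(\pos_i)<0$; equivalently $\detV^{\infty}(t)>0$ for all $t\in\RR$ (Theorem~\ref{thm:tsystem}). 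For $\detBW,\detW^{\infty}$ one argues identically, the $\M$ double zeros being replaced by a single zero of order $2\M$ at $\pos_0$.

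It then remains to transfer this positivity to small $r>0$ by a perturbation argument. Fixing $m_0$, I would split $\dompos$ into a large and a bounded part. For large $\abs{t}$ one argues directly: $\etaV(t)=\dotpH{\varphi(t)}{\pV}$ and $\normH{\varphi(t)}^{2}=\int_{\supp\Ps}e^{-2(t-\s)^2}\d\Ps(\s)\le \Ps(\domobs)\,e^{-2(\abs{t-\sst}-r)^2}\to 0$, so $\etaV(t)<1$ — equivalently $\detBV(t,\cdot)>0$ — outside some bounded set $\{\abs{t-\sst}\le R\}$. On this compact set, using the divisibility by $\prod_{j<k}(\s_k-\s_j)$ and the convergence of the reduced determinant to a positive multiple of $\detV^{\infty}(t)\ge\inf_{\abs{s-\sst}\le R}\detV^{\infty}(s)>0$ — a convergence that is uniform in $t$ over the compact set and in the samples over $[\sst-r,\sst+r]^{2\M}$ — one obtains $\detBV(t,\cdot)>0$ there once $r$ is small enough; together with the positivity of $\detAV$ and Proposition~\ref{prop:basiccompo} this yields $\detV(t)>0$ on all of $\dompos$, hence the Non-Degenerate Source Condition, and the same reasoning with $\detBW$ and Proposition~\ref{prop:basiccompoetaW} gives the $(2\M-1)$-nondegeneracy of $\pos_0$.

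The step I expect to be the main obstacle is this last transfer, namely controlling the limit $r\to 0$ uniformly — with respect to the positions of the samples inside $[\sst-r,\sst+r]$ (which is why extracting the common factor $\prod_{j<k}(\s_k-\s_j)$, present in $\detAV$, $\detBV$ and in every cofactor that appears, is the right normalization), with respect to the spike configuration $\bpos$, and over the (possibly unbounded) interval $\dompos$ — and reconciling the ``bounded $t$'' regime, governed by the polynomial limiting framework and Lemma~\ref{lem:polygauss}, with the ``large $t$'' regime, governed by the exponential decay of $\normH{\varphi(t)}$ when $\supp\Ps$ concentrates near $\sst$.
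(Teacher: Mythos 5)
Most of your plan is the paper's: positivity of $\detAV$ and $\detAW$ by the extended total positivity of the Gaussian, division of $\detBV$ by the sample Vandermonde $\prod_{j<k}(\s_k-\s_j)$, identification of the collapsed-sample limit with the confluent determinant built from $1$ and $(\cdot-\sst)^je^{-(\cdot-\sst)^2}$, $0\leq j\leq 2\M-1$, and its strict positivity via the zero count of Lemma~\ref{lem:polygauss}, exactly as in Proposition~\ref{prop:fullgaussian} (the paper packages the continuity of the reduced determinant in Lemma~\ref{sec:supervdm}). Where you genuinely diverge is the treatment of large $\abs{t}$, and that is where your argument has a gap.

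Your bound $\etaV(t)\leq\normH{\varphi(t)}\,\normH{\pV}$ makes the threshold $R$ depend on $\normH{\pV}$, and $\normH{\pV}$ is not controlled by $r$ alone: if the $\geq 2\M$ samples cluster at a scale $\delta\ll r$ inside $[\sst-r,\sst+r]$, the Gram matrix $\Gamma_{\bpos}^*\Gamma_{\bpos}$ degenerates and $\normH{\pV}$ blows up like a negative power of $\delta$ (note that the product $\normH{\varphi(t)}\normH{\pV}$ is invariant under rescaling $\Ps\mapsto c\Ps$, so mass normalization does not save you). Hence $R=R(\Ps)$, not $R(r)$; but your $r$ is then chosen from the uniform convergence of the reduced determinant on the compact set $\{\abs{t-\sst}\leq R\}$, so $r$ would depend on $\Ps$, while the admissible $\Ps$ are precisely those supported in $[\sst-r,\sst+r]$ --- the construction is circular, and the statement requires a single $r$ valid for every such $\Ps$ (only the support constraint, never the measure itself, may enter the estimates). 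This is exactly the obstacle you flag at the end, but flagging it does not resolve it. The paper's proof avoids $\pV$ altogether: it stays pointwise in the samples, renormalizes $\detBV$ by $\abs{t}^{2\M}/\prod_i(t-\pos_i)^2$ in addition to the Vandermonde factor, shows the resulting function $F$ extends continuously to the compact set $\overline{\RR}\times[\sst-r',\sst+r']$ with $F(\pm\infty,\cdot)>0$ (extended total positivity) and a strictly positive lower bound on the diagonal $(\s_1,\ldots,\s_{2\M})=(\sst,\ldots,\sst)$, and concludes by uniform continuity on that compact set; this yields $\detBV>0$ for all $t$, including $t$ near $\pm\infty$, with an $r$ independent of $\Ps$, and then Proposition~\ref{prop:basiccompo} and Theorem~\ref{thm:tsystem} finish the proof. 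Two smaller points: $\etaV(t)<1$ is equivalent to $\detV(t)>0$, not to $\detBV(t,\cdot)>0$ pointwise in the samples, so your parenthetical equivalence should be corrected if you patch the two regimes together; and, like yours, the paper's argument fixes $\bpos$, so the uniformity in $m_0$ you worry about is not delivered by the paper's proof either.
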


A proof of Proposition~\ref{prop:confined} is given in Appendix~\ref{sec:proofconfined}.

\section{Renormalizing the atoms}
In this last section, we discuss the situation when renormalizing the atoms, which amounts to the setting of~\cite{schiebinger2015superresolutionjournal}.
%\subsection{Impact of the normalization}
\label{sec:normalization}
Instead of directly using $\varphi$  and the corresponding correlation $\Co(x,y)$ in~\eqref{eq:bpursuit} and~\eqref{blasso}, one may wish to renormalize the atoms and consider $\phib(x)\eqdef \frac{1}{\Nf(x)}\varphi(x)$ and the corresponding correlation $\Cob(x,y)=\frac{1}{\Nf(x)\Nf(y)}\Co(x,y)$, where $x\mapsto \Nf(x)$ is a smooth (strictly) positive function. For instance $\Nf(x)=\sqrt{\Co(x,x)}$ yields atoms $\phib(x)$ with the same norm in $\Hh$.

It is then important to understand when the Non-Degenerate Source Condition holds for $\Cob$.

For notational simplicity, we define
\begin{align}
 \left(\unorm_0,\unorm_1,\unorm_2,\ldots ,\unorm_{2\M-1},\unorm_{2\M}\right)&\eqdef\left(1,\Cob(\cdot,x_1),\partial_2\Cob(\cdot,\pos_1),\ldots, \Cob(\cdot,\pos_\M), \partial_2\Cob(\cdot,\pos_M)\right).\label{eq:unorm}
%  \mbox{and}\quad \left(\vplain_0\ldots ,\vplain_{2\M}\right)&\eqdef \left(1,\Co(\cdot,x_1),\partial_2\Co(\cdot,\pos_1),\ldots, \Co(\cdot,\pos_\M), \partial_2\Co(\cdot,\pos_M)\right)\label{eq:vplain}.
\end{align}
and we consider the associated $\detVbar$ defined through~\eqref{eq:detV}.

We notice that 
\begin{align}\label{eq:tsysleibniz}
  \partial_2\Cob(t,\pos_j)=\frac{1}{\Nf(t)\Nf(\pos_j)}\partial_2\Co(t,\pos_j) + \frac{\Co(t,\pos_j)}{\Nf(t)}\frac{\d}{\d \pos_j}\left(\frac{1}{\Nf(\pos_j)}\right)
\end{align}
therefore by subtracting a multiple of each even column to its successor, we see that for all $t\in \dompos\setminus\{\pos_i\}_{i=1}^{\M}$, 
\begin{align}
  &\begin{mydet}
    1 & {\Cob(t,\pos_1)} & {\partial_2\Cob(t,\pos_1)} & \ldots & {\Cob(t,\pos_\M)} & {\partial_2\Cob(t,\pos_\M)} \\
    1 & {\Cob(\pos_1,\pos_1)} & {\partial_2\Cob(\pos_1,\pos_1)}& \ldots & {\Cob(\pos_1,\pos_\M)} & {\partial_2\Cob(\pos_1,\pos_\M)} \\
    0 & {\partial_1\Cob(\pos_1,\pos_1)} & {\partial_1\partial_2\Cob(\pos_1,\pos_1)}& \ldots & {\partial_1\Cob(\pos_1,\pos_\M)} & {\partial_1\partial_2\Cob(\pos_1,\pos_\M)} \\
  \vdots & \vdots & \vdots & & \vdots & \vdots\\
  1 & {\Cob(\pos_{\M},\pos_1)} & {\partial_2\Cob(\pos_{\M},\pos_1)}& \ldots & {\Cob(\pos_{\M},\pos_\M)} & {\partial_2\Cob(\pos_{\M},\pos_\M)} \\
  0 & {\partial_1\Cob(\pos_{\M},\pos_1)} & {\partial_1\partial_2\Cob(\pos_{\M},\pos_1)}& \ldots & {\partial_1\Cob(\pos_{\M},\pos_\M)} & {\partial_1\partial_2\Cob(\pos_{\M},\pos_\M)} \\
\end{mydet}\nonumber\\  
  &=\frac{1}{\left(\prod_{j=1}^{\M}\Nf(\pos_j)\right)^2}
  \begin{mydet}
    1 & \frac{\Co(t,\pos_1)}{\Nf(t)} & \frac{\partial_2\Co(t,\pos_1)}{\Nf(t)} & \ldots & \frac{\Co(t,\pos_\M)}{\Nf(t)} & \frac{\partial_2\Co(t,\pos_\M)}{\Nf(t)} \\
    1 & \frac{\Co(\pos_1,\pos_1)}{\Nf(\pos_1)} & \frac{\partial_2\Co(\pos_1,\pos_1)}{\Nf(\pos_1)} & \ldots & \frac{\Co(\pos_1,\pos_\M)}{\Nf(\pos_1)} & \frac{\partial_2\Co(\pos_1,\pos_\M)}{\Nf(\pos_1)} \\
    0 & \frac{\partial_1\Co(\pos_1,\pos_1)}{\Nf(\pos_1)} & \frac{\partial_1\partial_2\Co(\pos_1,\pos_1)}{\Nf(\pos_1)} & \ldots & \frac{\partial_1\Co(\pos_1,\pos_\M)}{\Nf(\pos_1)} & \frac{\partial_1\partial_2\Co(\pos_1,\pos_\M)}{\Nf(\pos_1)} \\
  \vdots & \vdots & \vdots & & \vdots & \vdots\\
    1 & \frac{\Co(\pos_{\M},\pos_1)}{\Nf(\pos_\M)} & \frac{\partial_2\Co(\pos_{\M},\pos_1)}{\Nf(\pos_{\M})} & \ldots & \frac{\Co(\pos_1,\pos_\M)}{\Nf(\pos_\M)} & \frac{\partial_2\Co(\pos_\M,\pos_\M)}{\Nf(\pos_\M)} \\
    0 & \frac{\partial_1\Co(\pos_\M,\pos_1)}{\Nf(\pos_\M)} & \frac{\partial_1\partial_2\Co(\pos_\M,\pos_1)}{\Nf(\pos_\M)} & \ldots & \frac{\partial_1\Co(\pos_\M,\pos_\M)}{\Nf(\pos_\M)} & \frac{\partial_1\partial_2\Co(\pos_\M,\pos_\M)}{\Nf(\pos_\M)}
  \end{mydet}\\
  &=\frac{1}{\Nf(t)\left(\prod_{j=1}^{\M}\Nf(\pos_j)\right)^4}\\
  &\times\begin{mydet}
%    1 & \Co(t,\pos_1) & \partial_2\Co(t,\pos_1) & \ldots & \Co(t,\pos_\M)} & \partial_2\Co(t,\pos_\M) \\
  \Nf(t) & \Co(t,\pos_1) & \partial_2\Co(t,\pos_1) & \ldots & \Co(t,\pos_\M) & \partial_2\Co(t,\pos_\M) \\
  \Nf(\pos_1) & \Co(\pos_1,\pos_1) & \partial_2\Co(\pos_1,\pos_1) & \ldots & \Co(\pos_1,\pos_\M) & \partial_2\Co(\pos_1,\pos_\M) \\
    0 & \partial_1\Co(\pos_1,\pos_1) & \partial_1\partial_2\Co(\pos_1,\pos_1) & \ldots & \partial_1\Co(\pos_1,\pos_\M) & \partial_1\partial_2\Co(\pos_1,\pos_\M) \\
  \vdots & \vdots & \vdots & & \vdots & \vdots\\
  \Nf(\pos_\M) & \Co(\pos_{\M},\pos_1) & \partial_2\Co(\pos_{\M},\pos_1) & \ldots & \Co(\pos_1,\pos_\M) & \partial_2\Co(\pos_\M,\pos_\M) \\
    0 & \partial_1\Co(\pos_\M,\pos_1) & \partial_1\partial_2\Co(\pos_\M,\pos_1) & \ldots & \partial_1\Co(\pos_\M,\pos_\M) & \partial_1\partial_2\Co(\pos_\M,\pos_\M)
  \end{mydet}\label{eq:tsyscob}
\end{align}

The above matrix, divided by $\prod_{i=1}^{\M}(t-\pos_i)^2$ is a infinitesimal version of the matrix $\Lambda(t,\pos_1,\pos_1, \ldots,\pos_\M, \pos_{\M})$ which appears in the \textsc{Determinantal} condition of~\cite{schiebinger2015superresolutionjournal}. We note that it is not specific to the $L^1$ norm, as $\Nf$ could be any (positive smooth) normalizing factor.

\begin{rem}\label{rem:tsysgammab}\label{rem:tsyspsib}
 If we define $\Gammab_{\bpos}\eqdef \begin{pmatrix}
   \phib(\pos_1) & \cdots &\phib(\pos_\M) & \phib'(\pos_1) &\cdots &\phib'(\pos_\M)
 \end{pmatrix}$, we note in view of~\eqref{eq:tsysleibniz} that $\det(\Gamma_{\bpos}^*\Gamma_{\bpos})$ and $\det(\Gammab_{\bpos}^*\Gammab_{\bpos})$ are equal up to a (strictly) positive multiplicative factor. Hence, when working with normalized atoms, it is sufficient to check that $\Gamma_{\bpos}$ has full rank (on the unnormalized family).
Similarly, if we define $\Fkb\eqdef \begin{pmatrix}
   \phib(\pos_0) & \phib'(\pos_0) &\cdots &\phib^{(k)}(\pos_0)
 \end{pmatrix}$, we note that $\det(\Fk^*\Fk)$ and $\det(\Fkb^*\Fkb)$ are equal up to a (strictly) positive multiplicative factor. Hence, when working with normalized atoms, it is sufficient to check that $\Fk$ has full rank (on the unnormalized family).
\end{rem}

\subsection{Cauchy-Binet formula}
In the rest of this section, to simplify things, we assume that $\Ps$ is a finite measure, \ie $\Ps(\domobs)<+\infty$. This ensures that the constant function $\s\mapsto 1$ is in $\Hh$ (and $\psi(t,\cdot)\in\LUP$ for all $t\in \dompos$). 

To ensure that~\eqref{eq:tsyscob} is positive, one may use the same decomposition as in~\cite{schiebinger2015superresolutionjournal} that it is equal up to a positive factor to
\begin{align}\label{eq:cauchybinetnorm}
  \int_{\TSDU}&\detAVbar(\s_0,\ldots,\s_{2\M})\detBVbar(t,\s_0,\ldots,\s_{2\M})\d\Ps^{\otimes 2\M+1}(\s_0,\ldots,\s_{2\M}),
\end{align}
where
\begin{align}
\detAVbar(\s_0,\ldots,\s_{2\M})&\eqdef \begin{mydet}
1 & \psi(\pos_1,\s_0) & \partial_1\psi(\pos_1,\s_0) & \cdots & \psi(\pos_\M,\s_0) & \partial_1\psi(\pos_\M,\s_0)\\
\vdots & \vdots & \vdots & & \vdots &\vdots\\
1 & \psi(\pos_1,\s_{2\M}) & \partial_1\psi(\pos_1,\s_{2\M}) & \cdots & \psi(\pos_\M,\s_{2\M}) & \partial_1\psi(\pos_\M,\s_{2\M})
\end{mydet}\\
\detBVbar(t,\s_0,\ldots,\s_{2\M})&\eqdef \frac{2}{\prod_{i=1}^\M(t-\pos_i)^2}\begin{mydet}
\psi(t,\s_0) &  \cdots & \psi(t,\s_{2\M})\\
\psi(\pos_1,\s_{0})  & \cdots & \psi(\pos_1,\s_{2\M}) \\
\partial_1\psi(\pos_1,\s_{2\M})& &\partial_1\psi(\pos_1,\s_{2\M})\\
\vdots & & \vdots\\
\psi(\pos_{\M},\s_{0})  & \cdots & \psi(\pos_{\M},\s_{2\M}) \\
\partial_1\psi(\pos_{\M},\s_{2\M})&\cdots&\partial_1\psi(\pos_{\M},\s_{2\M})
\end{mydet}
\end{align}

We obtain
\begin{proposition}\label{prop:normalcompo}
  Assume that $\Ps(\domobs)<+\infty$ and let $\Nf(t)\eqdef \dotp{1}{\phi(t)}=\int_{\domobs}\psi(t,\s)\d\Ps(\s)$. Assume moreover that $\Nf(t)>0$ for all $t\in\dompos$.
\begin{enumerate}
  \item   If $\card(\supp(\Ps))=2\M$, then~\eqref{eq:tsyscob} is identically zero. If moreover $\Gamma_{\bpos}$ has full rank, $\etaV$ is identically $1$.

  \item  If $\card(\supp(\Ps))\geq 2\M+1$, and $\detAVbar(\s_0,\ldots,\s_{2\M})>0$ for $\Ps^{\otimes 2\M+1}$-a.e. $(\s_0,\ldots,\s_{2\M})\in \TSDU$, 
 then $\Gammab_\bpos$ has full column rank.
 
 If, additionally,  for $t\in \dompos$, $\Ps^{\otimes 2\M+1}$-a.e. $(\s_0,\ldots,\s_{2\M})\in \TSDU$, $\detBVbar(t,\s_0,\ldots,\s_{2\M})>0$,
 then $\detVbar(t)>0$.
\end{enumerate}
 \end{proposition}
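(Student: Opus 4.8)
The plan is to reduce everything to the Cauchy-Binet computation of Section~\ref{sec:cauchy}, adapted to the presence of the extra constant row coming from the normalization. First I would establish claim (1). When $\card(\supp(\Ps)) = 2\M$, the measure $\Ps^{\otimes(2\M+1)}$ is carried by the set of $(2\M+1)$-tuples $(\s_0,\ldots,\s_{2\M})$ in which at least two coordinates coincide; on that set the determinant $\detAVbar(\s_0,\ldots,\s_{2\M})$ has two equal rows, hence vanishes. By the Cauchy-Binet identity the integrand in~\eqref{eq:cauchybinetnorm} is a product involving $\detAVbar$, so the integral — and therefore the determinant~\eqref{eq:tsyscob} — is identically zero. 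Since~\eqref{eq:tsyscob} equals $-\etaV''$-type determinants up to a strictly positive factor (via the chain of equalities~\eqref{eq:tsysleibniz}--\eqref{eq:tsyscob}) and, more to the point, the vanishing of that determinant for \emph{every} $t$ means the function $1-\etaV$ together with its relevant derivative data is linearly dependent on the columns, I would argue as in the proof of Theorem~\ref{thm:tsystem}: when $\Gamma_{\bpos}$ has full rank, the coefficient of the constant column cannot be zero, so the determinant vanishing at all $t$ forces $\etaV(t) = 1$ identically. (Here one uses Remark~\ref{rem:tsysgammab} to know that full rank of $\Gamma_{\bpos}$ and of $\Gammab_{\bpos}$ are equivalent.)

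For claim (2), the argument is the analogue of Proposition~\ref{prop:basiccompo}. Assuming $\card(\supp(\Ps)) \geq 2\M+1$ guarantees $\Ps^{\otimes(2\M+1)}(\TSDU) > 0$, so the strictly ordered simplex carries positive mass. I would first apply the Cauchy-Binet formula to $\det(\Gammab_{\bpos}^*\Gammab_{\bpos})$ — or rather to the determinant of the Gram matrix of the $2\M+1$ vectors $\{1\}\cup\{\phib(\pos_i),\phib'(\pos_i)\}$, which is $\int_{\TSDU}(\detAVbar)^2\,\d\Ps^{\otimes(2\M+1)}$ — to see that the hypothesis $\detAVbar > 0$ a.e.\ on $\TSDU$ makes this integral strictly positive, whence the family is linearly independent; combined with Remark~\ref{rem:tsysgammab} this yields that $\Gammab_{\bpos}$ has full column rank. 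Then, for fixed $t\in\dompos$, I would invoke the decomposition~\eqref{eq:cauchybinetnorm}: the determinant~\eqref{eq:tsyscob} equals, up to a strictly positive factor, $\int_{\TSDU}\detAVbar\cdot\detBVbar\,\d\Ps^{\otimes(2\M+1)}$, and with both factors strictly positive $\Ps^{\otimes(2\M+1)}$-a.e.\ on the positive-mass set $\TSDU$, the integral is strictly positive. Finally, passing back through the equalities~\eqref{eq:tsysleibniz}--\eqref{eq:tsyscob} (which only involve multiplication by the strictly positive $\Nf$'s), positivity of~\eqref{eq:tsyscob} is equivalent to $\detVbar(t) > 0$, which is the claim.

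The one genuine piece of work is verifying the Cauchy-Binet decomposition in the normalized setting: namely that~\eqref{eq:tsyscob} — the Gram-type determinant with one extra constant row/column encoding $\Nf$ — expands via linearity along the $2\M$ integral columns into exactly $\int_{\TSDU}\detAVbar\cdot\detBVbar\,\d\Ps^{\otimes(2\M+1)}$. This is the ``elementary variant of the Cauchy-Binet formula'' alluded to in Section~\ref{sec:cauchy}, now carried out with $2\M+1$ rather than $2\M$ integrations because the constant column $(\Nf(t),\Nf(\pos_1),0,\ldots)^T = (\int\psi(t,\s)\d\Ps(\s),\ldots)^T$ is itself an integral against $\Ps$. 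I expect this bookkeeping — tracking which columns carry an integral, splitting $\domobs^{2\M+1}$ over the permutations of the strictly ordered tuples, and checking that the row permutation needed to bring $\detBVbar$ into the displayed form contributes the sign already absorbed into the definition — to be the main (though routine) obstacle; everything else is an appeal to Theorem~\ref{thm:tsystem} and Remark~\ref{rem:tsysgammab}. One should also note that the constant row forces the use of $\TSDU$ (tuples of length $2\M+1$), which is precisely why $2\M+1$ samples are now required, consistent with the discussion in the Contributions section.
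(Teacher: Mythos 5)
Your proposal is correct and follows essentially the route the paper intends: the paper explicitly skips the proof as a direct consequence of the Cauchy--Binet decomposition~\eqref{eq:cauchybinetnorm}, and your argument (vanishing of the integral over $\TSDU$ when $\card(\supp(\Ps))=2\M$ plus the kernel argument from Theorem~\ref{thm:tsystem} and Remark~\ref{rem:tsysgammab} for part (1); positivity of $\int_{\TSDU}\detAVbar\,\detBVbar$ and of the Gram determinant for part (2), transferred back through the positive factors $\Nf$) is exactly the intended filling-in of that decomposition.
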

 We skip the proof as it follows directly from~\eqref{eq:cauchybinetnorm}. It should be noted that the $L^1$-normalization of the atoms ``spoils'' one measurement: one needs $2\M+1$ instead of $2\M$ to get identifiability or stability.

Again, there is a variant for the limit $(\pos_1,\ldots, \pos_\M)\to (\pos_0,\ldots, \pos_0)$. It relies on the decomposition 
\begin{align}\label{eq:cauchybinetnorm}
  \int_{\TSDU}&\detAWbar(\s_0,\ldots,\s_{2\M})\detBWbar(t,\s_0,\ldots,\s_{2\M})\d\Ps^{\otimes 2\M+1}(\s_0,\ldots,\s_{2\M}),
\end{align}
where
\begin{align}
\detAWbar(\s_0,\ldots,\s_{2\M})&\eqdef \begin{mydet}
  1 & \psi(\pos_0,\s_0) & \partial_1\psi(\pos_0,\s_0) & \cdots & (\partial_1)^{2\M-1}\psi(\pos_0,\s_0)\\
\vdots & \vdots & \vdots & & \vdots &\vdots\\
  1 & \psi(\pos_0,\s_{2\M}) & \partial_1\psi(\pos_0,\s_{2\M}) & \cdots &  (\partial_1)^{2\M-1}\psi(\pos_0,\s_{2\M})
\end{mydet}\\
\detBWbar(t,\s_0,\ldots,\s_{2\M})&\eqdef \frac{(2\M)!}{\prod_{i=1}^\M(t-\pos_i)^2}\begin{mydet}
\psi(t,\s_0) &  \cdots & \psi(t,\s_{2\M})\\
\psi(\pos_0,\s_{0})  & \cdots & \psi(\pos_0,\s_{2\M}) \\
\partial_1\psi(\pos_0,\s_{2\M})& \cdots &\partial_1\psi(\pos_0,\s_{2\M})\\
\vdots & & \vdots\\
(\partial_1)^{2\M-1}\psi(\pos_0,\s_{2\M})&\cdots&(\partial_1)^{2\M-1}\psi(\pos_0,\s_{2\M})
\end{mydet}
\end{align}
The conclusions of Proposition~\ref{prop:normalcompo} hold for $\etaW$ with the obvious adaptations.

\subsection{$L^1$-normalized Laplace measurements}
As a consequence we obtain the following result for $L^1$-normalized Laplace measurements, \ie  $\phib(\pos): \s\mapsto \frac{1}{\Nf(\pos)}\psi(\pos,\s)$ where $\psi(\pos,\s)=e^{-\pos\s}$.
\begin{corollary}
  Let $\dompos=[c,+\infty)$ where $c\geq 0$ and let $\Ps$ be a positive measure on $\domobs\subseteq[0,+\infty)$ such that $\int_{\domobs} (1+|\s|)^{4\M}e^{-2c\s}\d\Ps(s)<+\infty$ with $\card(\supp(\Ps))\geq 2\M+1$,
    \begin{itemize}
          \item If $m_0=\sum_{i=1}^{\M}a_i\delta_{\pos_i}$, with $\pos_i\in \interop(\dompos)$ and $\amp_i>0$ for all $i$, then $m_0$ satisfies the Non-Degenerate Source Condition,
          \item If $\pos_0\in \interop(\dompos)$, then the point $\pos_0$ is $(2\M-1)$ Non Degenerate,
  \end{itemize}
  for the reconstruction framework defined by $\Ps$ and $\overline{\psi}(\pos,\s)=\frac{1}{\Nf(\pos)}\psi(\pos,\s)$, where $\psi(\pos,\s)=e^{-\pos\s}$.
\end{corollary}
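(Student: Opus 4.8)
The plan is to deduce this corollary from the $L^1$-normalization machinery of Proposition~\ref{prop:normalcompo} (and its $\etaW$-variant), exactly as Corollary~\ref{coro:laplace} is deduced from Proposition~\ref{prop:basiccompo}, the only new point being that we must account for the loss of one measurement caused by the normalization. First I would check the integrability hypotheses: the condition $\int_\domobs (1+|\s|)^{4\M}e^{-2c\s}\d\Ps(\s)<+\infty$ guarantees that $\varphi(\pos):\s\mapsto e^{-\pos\s}$ and all its $\pos$-derivatives up to order $2\M$ lie in $\LDP$ for $\pos\in\interop(\dompos)=(c,+\infty)$, so that $\varphi\in\kernel{2\M}$; since $\Ps$ has finite total mass (taking $c=0$, resp.\ absorbing $e^{-2c\s}$ for $c>0$) the constant function $\s\mapsto 1$ belongs to $\Hh$, and $\Nf(\pos)=\int_\domobs e^{-\pos\s}\d\Ps(\s)>0$ for every $\pos$, so the normalized kernel $\overline{\psi}$ is well defined and the hypotheses of Proposition~\ref{prop:normalcompo} are met.

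Next I would verify the sign conditions on $\detAVbar$ and $\detBVbar$. By the $L^1$-normalization identities established in Section~\ref{sec:normalization} (see~\eqref{eq:tsyscob} and~\eqref{eq:cauchybinetnorm}), $\detVbar(t)$ equals, up to a strictly positive factor, the integral over $\TSDU$ of $\detAVbar(\s_0,\ldots,\s_{2\M})\,\detBVbar(t,\s_0,\ldots,\s_{2\M})$ against $\Ps^{\otimes 2\M+1}$. The crucial observation is that both $\detAVbar$ and the (suitably reordered) matrix inside $\detBVbar$ are, after the change of variable $\s\mapsto -\s$, determinants of the \textit{extended totally positive} kernel $K(x,y)=e^{xy}$ — exactly the objects analyzed at the start of Section~\ref{sec:laplace}. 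The extra constant column/row $\s\mapsto 1 = e^{0\cdot\s}$ simply adjoins the node $x=0$ (this is why the hypothesis is $\domobs\subseteq[0,+\infty)$ rather than $(0,+\infty)$, and why we now need $2\M+1$ nodes: the artificial node at $0$ must be distinct from all the $\s_j$, which $\Ps^{\otimes 2\M+1}$-a.e.\ it is once $\card(\supp\Ps)\geq 2\M+1$). Tracking the parity of the permutation that reorders rows/columns into increasing order of the $x$- and $\s$-variables, one finds that $\detAVbar$ and $\detBVbar(t,\cdot)$ carry the \emph{same} sign $(-1)^{(2\M+1)(2\M)/2}$ for $\Ps^{\otimes 2\M+1}$-a.e.\ $(\s_0,\ldots,\s_{2\M})\in\TSDU$ and every $t\in\dompos$, so their product is strictly positive a.e.\ and hence $\detVbar(t)>0$; Proposition~\ref{prop:normalcompo} then gives that $\Gammab_{\bpos}$ has full rank and, by Theorem~\ref{thm:tsystem} (applied to the correlation $\Cob$), that the NDSC holds. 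The argument for $\pos_0$ being $(2\M-1)$ Non Degenerate is identical, with $\detAWbar$, $\detBWbar$ and the $\etaW$-variant of Proposition~\ref{prop:normalcompo} in place of the $\etaV$ objects; here the confluent rows are handled by the replacement~\eqref{eq:replaceET} with successive $\pos$-derivatives, which preserves strict positivity for extended totally positive kernels.

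The main technical point — and the only place requiring care — is the bookkeeping of signs when the fixed node $x=0$ is inserted among the spike nodes $\pos_1<\cdots<\pos_\M$ (each appearing with multiplicity two in the $\etaV$ case, or a single node of multiplicity $2\M$ in the $\etaW$ case) and when the evaluation node $t$ is inserted as well: one must confirm that the \emph{same} reordering permutation acts on the rows of $\detAVbar$ and on the columns of $\detBVbar$, so that the two sign factors coincide and cancel in the product. This is precisely the computation already carried out for the unnormalized Laplace case in Appendix~\ref{sec-prooflap}; since passing from $\psi$ to $\overline{\psi}$ only multiplies rows by the strictly positive quantities $1/\Nf(\pos_i)$ (and uses~\eqref{eq:tsysleibniz} to clear the cross terms, as in~\eqref{eq:tsyscob}), none of these signs change. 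I would therefore conclude by invoking the proof of Corollary~\ref{coro:laplace} essentially verbatim, substituting $2\M+1$ for $2\M$ throughout and citing Proposition~\ref{prop:normalcompo} in place of Propositions~\ref{prop:basiccompo}–\ref{prop:basiccompoetaW}; the details are relegated to the appendix.
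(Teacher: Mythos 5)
Your proposal is correct and follows essentially the same route as the paper, whose proof is exactly the one-line argument you give: invoke Proposition~\ref{prop:normalcompo} (and its $\etaW$ variant) together with Theorem~\ref{thm:tsystem}, and obtain the a.e.\ positivity of $\detAVbar$, $\detBVbar$ (resp.\ $\detAWbar$, $\detBWbar$) from the extended total positivity of the exponential kernel after adjoining the position node $\pos=0$, which represents the constant column. One small misstatement in your parenthetical: the artificial node $0$ lives in the position variable and need only differ from the $\pos_i$ (automatic since $\pos_i>c\geq 0$), not from the samples $\s_j$; the requirement $\card(\supp\Ps)\geq 2\M+1$ comes solely from the determinants in the normalized Cauchy--Binet decomposition being of size $2\M+1$ (the normalization ``spoils'' one measurement), and this is already correctly encoded in your appeal to Proposition~\ref{prop:normalcompo}, so the proof is unaffected.
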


The proof is straightforward, introducing the point $\pos=0$ and using the extended total positivity of the exponential kernel.

\subsection{Arbitrary sampling for $L^1$ normalized Gaussian measurements}
The case of $L^1$ normalized Gaussian measurements, 
\ie  $\phib(\pos): \s\mapsto \frac{1}{\Nf(\pos)}\psi(\pos,\s)$, where 
$\psi(\pos,\s)=e^{-(\pos-\s)^2}$ for all $(\pos,\s)\in \RR^2$, and $\Nf(x)=\int_{\domobs}\psi(\pos,\s)\d\Ps(\s)$, is studied in~\cite{schiebinger2015superresolutionjournal}. The authors show that the alternative ($\etaV<1$ on $\dompos\setminus\{x_i\}_{i=1}^\M$) or ($\etaV>1$ on $\dompos\setminus\{x_i\}_{i=1}^\M$) holds, and they deduce the identifiability of non-negative sums of $\M$ Dirac masses. However it is not clear which of $\etaV> 1$ or $\etaV<1$ holds, and thus, whether the support is stable at low noise or not.

In fact, their proof contains all the necessary ingredients, as they show that both $\detAVbar$ and $\detBVbar$ are nonzero, and similarly for $\detAWbar$ and $\detBWbar$. By the same arguments as in the proof of Proposition~\ref{prop:fullgaussian}, one may prove that they are in fact positive and obtain
\begin{corollary}
  Let $\Ps$ be a finite positive measure on $\domobs\subseteq\RR$, with $\card(\supp(\Ps))\geq 2\M+1$.
  \begin{itemize}
    \item If $m_0=\sum_{i=1}^{\M}a_i\delta_{\pos_i}$, with $\pos_i\in\interop(\dompos)$ and $\amp_i>0$ for all $i$, then $m_0$ satisfies the Non-Degenerate Source Condition
    \item If $\pos_0\in \interop(\dompos)$, then the point $\pos_0$ is $(2\M-1)$ Non Degenerate
  \end{itemize}
  for the reconstruction framework defined by $\overline{\psi}(\pos,\s)=\frac{1}{\Nf(\pos)}\psi(\pos,\s)$ with $\psi(\pos,\s)=e^{-(\pos-\s)^2}$ for all $(\pos,\s)\in \RR^2$.
\end{corollary}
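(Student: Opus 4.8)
The plan is to follow the route of the proof of Proposition~\ref{prop:fullgaussian}, now inside the Cauchy--Binet machinery of Section~\ref{sec:normalization}. First I would check that all the prerequisites for that machinery are met: $\Ps$ is finite, so $\s\mapsto 1$ and every $\psi(t,\cdot)$ lie in $\Hh=\LDP$; since $\psi$ and all its partial derivatives $(\partial_1)^k\psi$ are bounded, one verifies routinely that $\phib\in\kernel{2\M}$; and $\Nf(t)=\int_{\domobs}\psi(t,\s)\d\Ps(\s)>0$ for every $t\in\dompos$ because $\psi>0$ and $\supp(\Ps)\neq\emptyset$. Hence $\Cob=\Co/(\Nf\otimes\Nf)$ is a smooth autocorrelation function to which Theorem~\ref{thm:tsystem} applies (with $\varphi$ replaced by $\phib$), and by Proposition~\ref{prop:normalcompo} together with its $\etaW$-variant the whole statement reduces to proving the strict positivity, for every choice of distinct nodes $\s_0<\dots<\s_{2\M}$ in $\supp(\Ps)$, of $\detAVbar(\s_0,\dots,\s_{2\M})$ and $\detAWbar(\s_0,\dots,\s_{2\M})$, and, for every fixed $t\in\dompos$, of $\detBVbar(t,\s_0,\dots,\s_{2\M})$ and $\detBWbar(t,\s_0,\dots,\s_{2\M})$; the cases $t=\pos_i$ and $t=\pos_0$ are covered via the continuous extensions of these determinants (as in Lemma~\ref{lem:vandermonde}), which moreover deliver $\etaV''(\pos_i)<0$ and $\etaW^{(2\M)}(\pos_0)<0$. (The full-rank hypotheses on $\Gammab_{\bpos}$ and on $\Fkb$ demanded by Theorem~\ref{thm:tsystem} are themselves consequences of $\detAVbar>0$, resp. $\detAWbar>0$, a.e., by Proposition~\ref{prop:normalcompo}.)

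Next I would establish that these four determinants never vanish on the relevant simplices --- this is exactly the estimate carried out in~\cite{schiebinger2015superresolutionjournal}, which for the Gaussian kernel is powered by Lemma~\ref{lem:polygauss}. Writing $\psi(x,\s)=e^{-x^2}e^{2x\s}e^{-\s^2}$, multiplying the $j$-th row of any of the four matrices by the positive scalar $e^{\s_j^2}$ turns every entry $(\partial_1)^k\psi(x,\s_j)$ into $p(\s_j)\,e^{2x\s_j}$ with $\deg p\le k$, and every entry $1$ into $e^{\s_j^2}$. If such a matrix were singular, a nontrivial dependency among its rows or columns, evaluated at the $2\M+1$ distinct nodes, would force a nonzero function to vanish at all of them. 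For $\detAVbar$ (resp. $\detAWbar$) that function has the form $\mu_0\,e^{\s^2}+\sum_i q_i(\s)\,e^{2\pos_i\s}$ with $\sum_i(\deg q_i+1)\le 2\M$; so when $\mu_0\neq 0$ it has at most $2\M$ zeros by Lemma~\ref{lem:polygauss}, and when $\mu_0=0$ it is a nonzero exponential polynomial with at most $2\M$ monomials, hence at most $2\M-1$ zeros --- a contradiction in either case. For $\detBVbar(t,\cdot)$ (resp. $\detBWbar(t,\cdot)$) the special row is $\psi(t,\cdot)$ rather than the constant $1$, so the dependency function is a genuine real exponential polynomial $\mu_0\,e^{-t^2}e^{2t\s}+\sum_i q_i(\s)\,e^{2\pos_i\s}$ with at most $1+2\M$ monomials (the exponents merging harmlessly when $t=\pos_i$ or $t=\pos_0$, in which case one $q_i$ has degree raised by one), hence at most $2\M$ real zeros --- again a contradiction.

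Finally, each of the four determinants is a continuous function of $(\pos_1,\dots,\pos_\M)$ (resp. of $\pos_0$), of $t$, and of $(\s_0,\dots,\s_{2\M})$ over a connected set, and by the previous step it vanishes nowhere there, so it has constant sign. The sign is $+$, and I would pin this down exactly as in the proof of Proposition~\ref{prop:fullgaussian}: by degenerating the configuration to a reference one for which the sign of $1-\etaV$, resp. $1-\etaW$, is already known (and keeping track of the positive normalizing factors $e^{-x_i^2}$, $e^{-\s_j^2}$). Plugging this back through Proposition~\ref{prop:normalcompo} gives $\detVbar(t)>0$ and, likewise, positivity of the corresponding $\etaW$-quantity for all $t\in\dompos$, and Theorem~\ref{thm:tsystem} then yields the Non-Degenerate Source Condition for $m_0$ and the $(2\M-1)$-nondegeneracy of $\pos_0$. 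The only genuinely delicate point is this sign determination: non-vanishing is a purely structural consequence of Lemma~\ref{lem:polygauss} (and of the elementary zero bound for real exponential polynomials), but fixing the sign requires tracking normalizations and a careful degeneration, precisely as was done for the fully sampled kernel.
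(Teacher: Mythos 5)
Your proposal is correct and follows essentially the same route as the paper: it reduces the statement via Proposition~\ref{prop:normalcompo} and Theorem~\ref{thm:tsystem} to the positivity of $\detAVbar$, $\detBVbar$, $\detAWbar$, $\detBWbar$, gets their non-vanishing from the zero-counting arguments behind Lemma~\ref{lem:polygauss} (i.e.\ the ingredients of~\cite{schiebinger2015superresolutionjournal}), and fixes the sign by the continuity/limit arguments used for Proposition~\ref{prop:fullgaussian}. The only difference is that you re-derive the non-vanishing of the determinants where the paper simply cites~\cite{schiebinger2015superresolutionjournal}.
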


% !TEX root = ../Concentrated.tex

\section*{Conclusion}
In this work, we have provided a necessary and sufficient condition for the Non-Degenerate Source Condition which ensures support stability for the total variation sparse recovery of measures. We have proved that this condition holds for the partial Laplace transform, regardless of the repartition of the samples. In the case of the convolution with a Gaussian filter, we have shown that the proposed conditions hold in the case of the full observation of the convolution, as well as for samplings which approximate the Lebesgue measure. Additionally such conditions hold in the case of \textit{confined samplings}, when the convolution is observed in some small interval.

The main purpose of the present work is to lay the theoretical foundations of super-resolution with Laplace observations~\cite{Laplace} in view of the forthcoming paper~\cite{Laplace} which deals with Total Internal Reflection Microscopy (TIRF).

\section*{Acknowledgements}
The author would like to thank Gabriel Peyr\'e, Quentin Denoyelle and Emmanuel Soubies for stimulating discussions about this work, in particular for suggesting the development around the unsampled Gaussian convolution.

\appendix
%\section{Proofs of the claimed results}
%\input{sections/proofs-blasso}
\section{Proofs of Section~\ref{sec:laplacegauss}}

\subsection{Proof of Corollary~\ref{coro:laplace}}
\label{sec-prooflap}
\begin{proof}
  We apply Proposition~\ref{prop:basiccompo} (resp. Proposition~\ref{prop:basiccompoetaW}) and Theorem~\ref{thm:tsystem}, noting that the conclusion of Proposition~\ref{prop:basiccompo} still holds if we only assume that the product of the two determinants
  $\detAV(\s_1,\ldots,\s_{2\M})$ and $\detBV(t,\s_1,\ldots,\s_\M)$ is (strictly) positive.
   
  The condition $\int_{\domobs} (1+|\s|)^{4\M}e^{-2c\s}\d\Ps(s)<+\infty$ ensures that~\eqref{eq:derivint} holds for $0\leq k\leq 2\M$.
  As noted above, the exponential kernel is \textit{extended totally positive}, if $0<\pos_1<\ldots <\pos_\M$ (resp. $0<\pos_0$), and $0<\s_1< \ldots < \s_{2\M}$, we get %setting $\psi(\pos,\s)=e^{-\pos\s}$, we immediately get 
\begin{align*}
  0&<  (-1)^{{\M(2\M-1)}}  \begin{mydet}
   \psi(\pos_1,\s_1) & \ldots & \partial_1\psi(\pos_\M,\s_{1})\\ 
   \vdots & & \vdots\\
  \psi(\pos_1,\s_{2\M}) & \ldots & \partial_1\psi(\pos_\M,\s_{2\M}) 
  \end{mydet}\\
   &= (-1)^{{\M(2\M-1)}}\detAV(\s_1,\ldots,\s_{2\M}),\\
  \mbox{(resp.)}\quad  0&< (-1)^{{\M(2\M-1)}}  \begin{mydet}
      \psi(\pos_0,\s_1) & \ldots & (\partial_1)^{2\M-1}\psi(\pos_0,\s_{1})\\ 
   \vdots & & \vdots\\
      \psi(\pos_0,\s_{2\M}) & \ldots & (\partial_1)^{2\M-1}\psi(\pos_0,\s_{2\M}) 
    \end{mydet}\\
    &= (-1)^{{\M(2\M-1)}}\detAW(\s_1,\ldots,\s_{2\M}).
\end{align*}
As for the other determinant, introducing $\s_0=0$, we obtain
\begin{align*}
  0&<\frac{(-1)^{{\M(2\M+1)}}}{\prod_{i=1}^{\M}(t-\pos_i)^2} \begin{mydet}
    \psi(t,\s_0) & \psi(t,\s_1) & \ldots & \psi(t,\s_{2\M})\\ 
    \psi(\pos_1,\s_0) & \partial\psi(\pos_1,\s_1) & \ldots & \psi(\pos_1,\s_{2\M})\\ 
    \partial_1\psi(\pos_1,\s_0) & \partial_1\partial\psi(\pos_1,\s_1) & \ldots & \partial_1\psi(\pos_1,\s_{2\M})\\ 
    \vdots & \vdots & & \vdots\\
  \psi(\pos_\M,\s_0) & \partial\psi(\pos_{\M},\s_1) & \ldots & \psi(\pos_{\M},\s_{2\M})\\ 
  \partial_1\psi(\pos_{\M},\s_0) & \partial_1\partial\psi(\pos_{\M},\s_1) & \ldots & \partial_1\psi(\pos_{\M},\s_{2\M})
  \end{mydet}\\
  &=(-1)^{{\M(2\M+1)}}\detBV(t,\s_1,\ldots,\s_\M)
\end{align*}
The same holds for $t=\pos_i$, with the usual extension.
\end{proof}

\subsection{Proof of Proposition~\ref{prop:fullgaussian}}
\label{sec:prooffullgauss}
\begin{proof}[Proof of Proposition~\ref{prop:fullgaussian}]
  The correlation function is given by $\Co(\pos,\pos')=e^{\frac{1}{2}(\pos-\pos')^2}\Co(0,0)$, and $\partial_2\Co(\pos,\pos')=(\pos'-\pos)e^{\frac{1}{2}(\pos-\pos')^2}\Co(0,0)$.
  
  Setting $K(z,z')=e^{\frac{1}{2}(z-z')^2}$ we see that, up to a positive factor, $\det(\Gamma_{\bpos}^*\Gamma_{\bpos})$ is exactly the determinant $\det(K(z_i,z'_j)$ for $z_{2i-1}=z_{2i}=\pos_i$ (resp.~$z'_{2i-1}=z'_{2i}=\pos_i$) with the replacement introduced described in \eqref{eq:replaceET} in $z$ and $z'$. Hence, by the \textit{extended total positivity} of the Gaussian kernel (see~\cite{karlin1968total}), $\det(\Gamma_{\bpos}^*\Gamma_{\bpos})>0$ and $\Gamma_{\bpos}$ has full rank.

  Now, dropping the constant factors $\Co(0,0)$, let us write 
     \begin{align*}
       \begin{matrix}
         u_0(t):=1, &  u_1(t):=e^{-(t-\pos_1)^2/2}, & \cdots & u_{2\M-1}(t):=e^{-(t-\pos_\M)^2/2},\\
                    & u_2(t):=(t-\pos_1)e^{-(t-\pos_1)^2/2}, &  \cdots & u_{2\M}(t):=(t-\pos_{\M})e^{-(t-\pos_\M)^2/2}.
       \end{matrix}
   \end{align*}
  By the Leibniz formula, we note that any function of the form
\begin{align*}
\t\mapsto \sum_{i=0}^{2\M} b_i u_i(t)
\end{align*}
  has the same roots (with the same multiplicites) as the function 
  \begin{align}\label{eq:gausslap}
    \t\mapsto \sum_{i=1}^{\M} \left[(b_{2i-1}+b_{2i}(\t-\pos_i))e^{\frac{1}{2}\pos_i^2}\right]e^{\pos_it} +b_0 e^{\frac{1}{2}t^2},
  \end{align}
If $b_0=0$, it is known that~\eqref{eq:gausslap} has at most $2\M-1$ zeros (as a consequence of the extended total positivity of the Gaussian, see also~\cite[Ex. V.75]{polya_problems_1972}). If $b_0\neq 0$, it follows from Lemma~\ref{lem:polygauss} that~\eqref{eq:gausslap} has at most $2\M$ zeros. As a result, the function
\begin{align}\label{eq:gausratio}
\detV:  t\longmapsto \frac{2}{\prod_{i=1}^\M (t-\pos_i)^2} \begin{mydet}
        u_0(t) & u_1(t) & \cdots &u_{2\M}(t)\\
        u_0(\pos_1) & u_1(\pos_1) & \cdots &u_{2\M}(\pos_1)\\
        u_0'(\pos_1) & u_1'(\pos_1) & \cdots &u_{2\M}'(\pos_1)\\
        \vdots & \vdots & & \vdots\\
        u_0(\pos_{\M}) & u_1(\pos_{\M}) & \cdots &u_{2\M}(\pos_{\M})\\
        u_0'(\pos_{\M}) & u_1'(\pos_{\M}) & \cdots &u_{2\M}'(\pos_{\M})
    \end{mydet},
  \end{align}
  which is continuous on $\dompos$, does not vanish. To evaluate its sign, we let $t\to+\infty$, noting that $u_i(t)\to 0$ for $1\leq i\leq 2\M$, and we expand along the first row
\begin{align*}
  \detV(t)&= \frac{2}{\prod_{i=1}^\M (t-\pos_i)^2}\left(
\begin{mydet}
        u_1(\pos_1) & \cdots &u_{2\M}(\pos_1)\\
         u_1'(\pos_1) & \cdots &u_{2\M}'(\pos_1)\\
         \vdots & & \vdots\\
         u_1(\pos_{\M}) & \cdots &u_{2\M}(\pos_{\M})\\
         u_1'(\pos_{\M}) & \cdots &u_{2\M}'(\pos_{\M})
\end{mydet} + o(1)
\right)\\
&>0 \mbox{ for $t$ large enough,}
\end{align*}
 by the extended total positivity of the Gaussian.
 As a result, $\detV$ is strictly positive on $\RR$, and the (NDSC) holds.

  The second conclusion is obtained by applying similar arguments to $\det(\Fdn^*\Fdn)$ and to the functions 
\begin{align}
  \begin{matrix}
    u_0(t):=1, &  u_1(t):=e^{-(t-\pos_0)^2/2},& u_2(t)=\tH_{2}(t)e^{-(t-\pos_0)^2/2}, & \cdots & \tH_{2\M-1}(t)e^{-(t-\pos_0)^2/2},
       \end{matrix}
\end{align}
where $\tH_{k}$ is defined by $\tH_k(u)=e^{u^2}\frac{\d^k}{\d u^k}(e^{-u^2})$, that is (up to a $(-1)^k$ factor) the $k$-th Hermite polynomial (note that each $\tH_k$ has degree $k$). We skip the detail for brevity. 
\end{proof}

\subsection{Proof of Proposition~\ref{prop:samplebesque}}
\label{sec:proofsamplebesque}
\begin{proof}
  The autocorrelation functions corresponding to the Lebesgue measure and $\Pns$ are defined by
  \begin{align*}
    \Co(\pos,\pos')&=\int_{\RR}  e^{-(\pos-\s)^2}e^{-(\pos'-\s)^2}\d \s, \\
    \Con(\pos,\pos')&=\int_{\RR}  e^{-(\pos-\s)^2}e^{-(\pos'-\s)^2}\d\Pns(\s),\\
    \intertext{so that, denoting  by $\tH_k$  the $k$-th Hermite polynomial as above,}
    (\partial_1)^{k}(\partial_2)^{\ell}\Co(\pos,\pos')&=\int_{\RR} \tH_k(\pos-\s)\tH_\ell(\pos'-\s) e^{-(\pos-\s)^2}e^{-(\pos'-\s)^2}\d\s,\\
    (\partial_1)^{k}(\partial_2)^{\ell}\Con(\pos,\pos')&=\int_{\RR} \tH_k(\pos-\s)\tH_\ell(\pos'-\s) e^{-(\pos-\s)^2}e^{-(\pos'-\s)^2}\d\Pns(\s).
  \end{align*}
  Denoting by $\Gamma_\bpos$ (resp. $\Gamma_\bpos^n$) the linear operators defined by~\eqref{eq:defgamma}, we consider the vanishing derivatives precertificates, 
  \begin{align}
    \etaVn =\sum_{i=1}^{\M}\alpha_i^n\left(\Con(\cdot,\pos_i)+\beta_i^n\partial_2\Con(\cdot,\pos_i)\right),\qquad &
    \etaV =\sum_{i=1}^{\M}\alpha_i\left(\Co(\cdot,\pos_i)+\beta_i\partial_2\Co(\cdot,\pos_i)\right)\\
    \mbox{where}
    \begin{pmatrix}
      \alpha^n\\\beta^n 
    \end{pmatrix}=({\Gamma_\bpos^n}^*\Gamma_\bpos^n)^{-1}\begin{pmatrix}
      1\\0\\\vdots\\ 1\\0
    \end{pmatrix},\qquad&\begin{pmatrix}
      \alpha\\\beta 
    \end{pmatrix}=({\Gamma_\bpos}^*\Gamma_\bpos)^{-1}\begin{pmatrix}
      1\\0\\\vdots\\ 1\\0
    \end{pmatrix}
  \end{align}
  By Proposition~\ref{prop:fullgaussian} above, $\etaV(t)<1$ for $t\in \dompos\setminus\{\pos_i\}_{i=1}^\M$, $\etaV(\pos_i)=1$, and $\etaV''(\pos_i)<0$ for all $1\leq i\leq \M$. We prove that the same holds for $\etaVn$ by uniform convergence of the derivatives.

We note that as $n\to+\infty$
\begin{align*}
  \norm{({\Gamma_{\bpos}^n}^*\Gamma_{\bpos}^n)^{-1}-(\Gamma_{\bpos}^*\Gamma_{\bpos})^{-1}} \leq \frac{\norm{\Id-(\Gamma_{\bpos}^*\Gamma_{\bpos})^{-1}({\Gamma_{\bpos}^n}^*\Gamma_{\bpos}^n)}}{1-\norm{\Id-(\Gamma_{\bpos}^*\Gamma_{\bpos})^{-1}({\Gamma_{\bpos}^n}^*\Gamma_{\bpos}^n)}}\longrightarrow 0,
\end{align*}
therefore $\alpha^n\to \alpha$ and $\beta^n\to \beta$.
Moreover, \eqref{eq:cvgauss} implies that for $k\in \{0,1,2\}$ and $\ell\in \{0,1\}$,
\begin{align*}
\sup_{x\in\dompos}\left|(\partial_1)^k (\partial_2)^\ell\Con(\pos,\pos_i)-(\partial_1)^k (\partial_2)^\ell\Co(\pos,\pos_i)\right|\to 0,
\end{align*}
which implies that $\sup_{x\in\dompos}|\etaVn{}^{(k)}(x)-\etaV^{(k)}(x)|\to 0$. Observing that  that there is some $r>0$, some $\epsilon>0$, such that 
\begin{align*}
  \etaV&\leq 1-\varepsilon\quad \mbox{in $\dompos\setminus \bigcup_{i=1}^\M(\pos_i-r,\pos_i+r)$,}\\
  \etaV{}''&<0\quad \mbox{in $\dompos\cap \left(\bigcup_{i=1}^\M(x_i-r,x_i+r)\right)$,}\\
  \etaV&<1 \quad \mbox{in $\dompos\cap \left(\bigcup_{i=1}^\M(x_i-r,x_i+r)\setminus\{\pos_i\}\right)$.}\\
\end{align*}
we conclude by standard arguments of uniform convergence that the same holds for $\etaVn$ for $n$ large enough. 
This ensures the Non-Degenerate Source Condition holds for $m_0$ with $\Pns$.

As for the similar statement concerning the $2\M-1$ Non-Degeneracy of $\pos_0$, we skip the proof as it is the same pattern, involving $\etaW$ and the matrices $\Fdn$ instead.
\end{proof}

\subsection{Proof of Proposition~\ref{prop:confined}}
\label{sec:proofconfined}

We begin with a continuous extension lemma for rescaled determinants.
\begin{lemma}\label{sec:supervdm}
  Let $\{u_1,\ldots u_n\}\subset \Cder{n-1}(\RR)$.

Then the function 
\begin{align}
  U: (\s_1,\ldots \s_n)\longmapsto \frac{1}{\prod_{1\leq i<j\leq n}(\s_j-\s_i)}
  \begin{mydet}
    u_1(\s_1) & \cdots & u_n(\s_1)\\
    \vdots & & \vdots\\
    u_1(\s_n) & \cdots & u_n(\s_n)
  \end{mydet}
\end{align}
has a continuous extension to $\RR^n$. Moreover, there exists some constant $\gamma>0$ such that 
\begin{align}
\forall t\in \RR,\quad  U(s,\ldots,s)=\gamma  \begin{mydet}
      u_1(s) & \cdots & u_n(s)\\
      u_1'(s) & \cdots & u_n'(s)\\
    \vdots & & \vdots\\
  u_1^{(n-1)}(s) & \cdots & u_n^{(n-1)}(s)
\end{mydet}.
\end{align}
\end{lemma}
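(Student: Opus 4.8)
## Proof proposal

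The plan is to recognize $U$ as a \emph{generalized divided difference} (or, equivalently, a ratio of a confluent Vandermonde–type determinant to an ordinary Vandermonde determinant) and to exploit the classical fact that such objects depend smoothly on the nodes, with the confluent limit obtained by replacing repeated nodes with successive derivatives. I would first treat the case of two nodes colliding, then iterate.

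First I would observe that the numerator determinant
\[
D(\s_1,\ldots,\s_n)=\begin{mydet}
    u_1(\s_1) & \cdots & u_n(\s_1)\\
    \vdots & & \vdots\\
    u_1(\s_n) & \cdots & u_n(\s_n)
  \end{mydet}
\]
is an alternating function of $(\s_1,\ldots,\s_n)$: swapping two nodes changes the sign. Hence it vanishes on each hyperplane $\{\s_i=\s_j\}$, and — since each $u_k$ is $\Cder{n-1}$ — it is divisible (as a $\Cder{}$ function, by a Hadamard-type argument on the difference quotient) by each factor $(\s_j-\s_i)$. Carrying this out carefully, the quotient $U=D/\prod_{i<j}(\s_j-\s_i)$ extends continuously (in fact as a symmetric $\Cder{}$ function, though we only need continuity) to all of $\RR^n$. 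The cleanest way to make this rigorous is to write $D$ in integral form using the Hermite–Genocchi / Taylor-with-integral-remainder representation of divided differences: for each fixed ordering, one row operation at a time turns consecutive rows into divided differences $\frac{u_k(\s_{j})-u_k(\s_{j-1})}{\s_j-\s_{j-1}}=\int_0^1 u_k'((1-\theta)\s_{j-1}+\theta\s_j)\,\d\theta$, pulling out a factor $(\s_j-\s_{j-1})$ and leaving an integrand that is continuous in all the $\s_i$'s. Repeating produces $U$ as an iterated integral of an $(n-1)$-st derivative of the $u_k$'s over a simplex, which is manifestly continuous (and, by symmetry of $D$, the formula is independent of the chosen ordering).

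Then, to identify the value on the diagonal, I would set $\s_i = s + \varepsilon \tau_i$ for fixed distinct $\tau_1<\cdots<\tau_n$ and let $\varepsilon\to 0$. On the numerator side, Taylor-expanding each entry $u_k(s+\varepsilon\tau_i)=\sum_{r=0}^{n-1} u_k^{(r)}(s)\frac{(\varepsilon\tau_i)^r}{r!}+o(\varepsilon^{n-1})$ and using multilinearity of the determinant in the columns, the leading term is
\[
D(s+\varepsilon\tau_1,\ldots) = \varepsilon^{\binom{n}{2}}\Big(\prod_{r=0}^{n-1}\frac{1}{r!}\Big)\operatorname{Vand}(\tau_1,\ldots,\tau_n)\begin{mydet}
      u_1(s) & \cdots & u_n(s)\\
      u_1'(s) & \cdots & u_n'(s)\\
    \vdots & & \vdots\\
  u_1^{(n-1)}(s) & \cdots & u_n^{(n-1)}(s)
\end{mydet}+o(\varepsilon^{\binom n2}),
\]
by the standard Cauchy–Binet / column-operation argument (each column contributes a power of $\varepsilon$, the cross terms being absorbed by row reduction, exactly as in the confluent Vandermonde computation). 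On the denominator side, $\prod_{i<j}(\s_j-\s_i)=\varepsilon^{\binom n2}\operatorname{Vand}(\tau_1,\ldots,\tau_n)$. Dividing and letting $\varepsilon\to0$, the $\operatorname{Vand}(\tau)$ factors and the $\varepsilon^{\binom n2}$ cancel, giving $U(s,\ldots,s)=\gamma\det\big(u_k^{(r-1)}(s)\big)_{1\le k,r\le n}$ with $\gamma=\prod_{r=0}^{n-1}\frac{1}{r!}$, a positive constant independent of $s$ and of the $u_k$'s. Continuity of $U$ (already established) guarantees this limit is the correct value of the extension at the diagonal point, and the same Taylor argument applied at a partial collision $\s_{i}=\cdots=\s_{i+r-1}$ handles the general boundary strata, matching the expected confluent determinant there.

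The main obstacle is the first step: rigorously justifying that $D$ is $\Cder{}$-divisible by $\prod_{i<j}(\s_j-\s_i)$ and that the quotient extends \emph{continuously} (not merely that a pointwise limit exists along each ray). Under the stated hypothesis $u_k\in\Cder{n-1}$ this is exactly the regularity where the Hermite–Genocchi integral representation still applies — each elimination step costs one derivative, and $n-1$ eliminations are needed — so one must be careful not to need more smoothness than is assumed. Once the integral representation is in place, both the continuous extension and the diagonal value follow uniformly, and the positivity of $\gamma$ is immediate from its explicit product form.
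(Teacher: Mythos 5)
Your proof is correct and its core is the same as the paper's: you establish the continuous extension exactly as the paper does, by iterated row operations with integral (Hermite--Genocchi) remainders that factor out $\prod_{i<j}(\s_j-\s_i)$ and leave an integrand depending continuously on $(\s_1,\ldots,\s_n)$. You only diverge on the diagonal value: the paper simply evaluates its integral representation at $\s_1=\cdots=\s_n=s$ (all convex combinations collapse to $s$, so $\gamma$ is the integral of the monomial in the $\theta$'s), whereas you recompute it as a scaling limit $\s_i=s+\varepsilon\tau_i$ via a confluent-Vandermonde expansion; this is legitimate once continuity is known and has the small bonus of giving the explicit constant $\gamma=\prod_{r=0}^{n-1}\frac{1}{r!}$, but the phrase ``cross terms absorbed by row reduction'' should be replaced by an actual estimate (e.g.\ in the multilinear expansion, a term using $m\geq 1$ remainder rows is $o\bigl(\varepsilon^{(n-1)m+\binom{n-m}{2}}\bigr)=o\bigl(\varepsilon^{\binom{n}{2}}\bigr)$), or avoided altogether by reading the diagonal value off the integral formula as in the paper.
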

\begin{proof}
  We prove the continuity by operations on rows in the determinant. As we operate on rows only, we only represent the $j$-th column.
Applying a Taylor expansion and substracting the first row to its successors, then the second one to its successors,
  \begin{align*}
    \begin{mydet}
      u_j(\s_1)\\
      \vdots\\
      u_j(\s_n)
    \end{mydet}_{1\leq j\leq n}&=\left(\prod_{i>2}(\s_i-\s_1)\right)\int_0^1\begin{mydet}
      u_j(\s_1)\\
      u_j'(\s_1(1-\theta_1)+\s_2\theta_1)\\
      \vdots\\
      u_j'(\s_1(1-\theta_1)+\s_n\theta_1)
    \end{mydet}_{1\leq j\leq n}\d \theta_1\\
    &= \left(\prod_{i>1}(\s_i-\s_1)\right)\left(\prod_{i>2}(\s_i-\s_2)\right)\\
    &\quad \times \int_{[0,1]^2}\theta_1^{n-2}\begin{mydet}
      u_j(\s_1)\\
      u_j'(\s_1(1-\theta_1)+\s_2\theta_1)\\
      u_j''(\s_1(1-\theta_1)+\s_2\theta_1(1-\theta_2)+\s_3\theta_1\theta_2)\\
      \vdots\\
      u_j''(\s_1(1-\theta_1)+\s_2\theta_1(1-\theta_2)+\s_n\theta_1\theta_2)\\
    \end{mydet}_{1\leq j\leq n}\d \theta_1\d \theta_2.
  \end{align*}
  Iterating this procedure, we see that we may write
\begin{align}
     \begin{mydet}
      u_j(\s_1)\\
      \vdots\\
      u_j(\s_n)
    \end{mydet}_{1\leq j\leq n}&= \left(\prod_{k>l}(\s_k-\s_l)\right)\int_{[0,1]^{n-1}}\theta_1^{\alpha_1}\ldots \theta_{n-2}^{\alpha_{n-2}}\\
    & \times 
    \begin{mydet}
      u_j(\s_1)\\
      u_j'(\s_1(1-\theta_1)+\s_2\theta_1))\\
      u_j''(g_{3}(\s_1,\s_2, \s_3,\theta_1,\theta_2))\\
      \vdots\\
      u_j^{(n-1)}(g_{n-1}(\s_1,\ldots, \s_n,\theta_1,\ldots,\theta_{n-1}))\\
    \end{mydet}_{1\leq j\leq n}\-\-\d \theta_1\ldots \d \theta_{n-1}
\end{align}
for some exponents $\alpha_1,\ldots,\alpha_{n-2}$ in $\NN$, and with \begin{align*}
  g_{k}(\s_1,\ldots, \s_k,\theta_1,\ldots,\theta_{k-1})&=\s_1(1-\theta_1) + \s_2\theta_1(1-\theta_2) +\cdots \\
  &\quad +\s_{k-1} \theta_1\theta_2\ldots \theta_{k-2}(1-\theta_{k-1})+ \s_k \theta_1\theta_2\ldots \theta_{k-2}\theta_{k-1}
\end{align*}
which describes some convex combination of $\s_1,\ldots, \s_k$, for $1\leq k\leq n-1$. 

This yields the claimed continuity and value at $(s,\ldots,s)$.
\end{proof}

Now, we are in position to prove Proposition~\ref{prop:confined}.

\begin{proof}[Proof of Proposition~\ref{prop:confined}]
  First, we note that $\detAV(\s_1,\ldots,\s_{2\M})>0$ (resp.~$\detAW(\s_1,\ldots,\s_{2\M})>0$) always holds by the extended total positivity of the Gaussian kernel.
  Next, we show that $\detBV(t,\s_1,\ldots,\s_{2\M})>0$  holds.

Now, we introduce a renormalized version of $\detBV$, 
\begin{align}
  F(t,\s_1,\ldots,\s_{2\M}) &= \frac{\abs{t}^{2\M}}{\prod(t-\pos_i)^2}\frac{1}{\prod_{i<j}(\s_j-\s_i)} \begin{mydet}
  1 & \psi(t,\s_1) & \cdots & \psi(t,\s_{2\M})\\
  1 & \psi(\pos_1,\s_1) & \cdots & \psi(\pos_1,\s_{2\M})\\
  0 & \partial_1 \psi(\pos_1,\s_1) & \cdots & \psi(\pos_1,\s_{2\M})\\
  \vdots & & \vdots \\
  1 & \psi(\pos_{\M},\s_1) & \cdots & \psi(\pos_{\M},\s_{2\M})\\
  0 & \partial_1 \psi(\pos_{\M},\s_1) & \cdots & \psi(\pos_{\M},\s_{2\M})\\
  \end{mydet}
\end{align}
  
Let $r'>0$. By Lemma~\ref{sec:supervdm}, which is a refined version of Lemma~\ref{lem:vandermonde}, we note that $H$ is continuous on $\RR\times [\s_*-r',s_*+r']$. Now, we prove that it can be extended by continuity to the compact set $\overline{\RR}\times [\s_*-r',\s_*+r']$, where $\overline{\RR}=[-\infty,+\infty]$, equipped with a suitable metric\footnote{For instance with $d(t,t')=|\arctan(t)-\arctan(t')|$.}.  

From the proof of  Lemma~\ref{sec:supervdm}, we know that $H$ may be written as 
\begin{align}
  F(t,\s_1,\ldots,\s_{2\M}) &= \frac{\abs{t}^{2\M}}{\prod(t-\pos_i)^2} \int_{[0,1]^{2\M-1}}\theta_1^{\alpha_1}\ldots \theta_{2\M-2}^{\alpha_{2\M-2}}\-\-H(t,(\s_j)_{1\leq j\leq 2\M},(\theta_j)_{1\leq j\leq 2\M-1})\d\theta_1\ldots\d\theta_{2\M-1}. 
\end{align}
where $H$ is the determinant of the matrix whose first row is:
\begin{align}\label{eq:firstrow}
\begin{pmatrix}
  1 & \psi(t,\s_1) & \partial_2\psi(t,g_1((\s_j)_{1\leq j\leq 2},\theta_1)) & \cdots  & (\partial_2)^{2\M-1}\psi(t,g_{2\M-1}((\s_j)_{1\leq j\leq 2\M-1},(\theta_j)_{1\leq j\leq 2\M-2})) 
\end{pmatrix}\\
\intertext{and the rows $1+2i$ and $2+2i$ are}
\begin{pmatrix}
  1 & \psi(\pos_i,\s_1) & \partial_2\psi(\pos_i,g_1((\s_j)_{1\leq j\leq 2},\theta_1)) & \cdots  & (\partial_2)^{2\M-1}\psi(\pos_i,g_{2\M-1}((\s_j)_{1\leq j\leq 2\M-1},(\theta_j)_{1\leq j\leq 2\M-2}))\\ 
  0 & \partial_1\psi(\pos_i,\s_1) & \partial_1\partial_2\psi(\pos_i,g_1((\s_j)_{1\leq j\leq 2},\theta_1)) & \cdots  & \partial_1(\partial_2)^{2\M-1}\psi(\pos_i,g_{2\M-1}((\s_j)_{1\leq j\leq 2\M-1},(\theta_j)_{1\leq j\leq 2\M-2}))
\end{pmatrix}\label{eq:otherrow}
\end{align}
Here $g_{k}(\s_1,\ldots,\s_{k},\theta_1,\ldots, \theta_{k-1}))$ is a convex combination of $\s_1,\ldots,\s_k$.

Since $\{\s_j\}_{j=1}^{2\M}\subset [s_*-r',s_*+r']$, we see that, as $\abs{t}\to+\infty$, the first row~\eqref{eq:firstrow} converges uniformly to the row $\begin{pmatrix}
  1 & 0 & 0 & \cdots  & 0
\end{pmatrix}$. 
As a result, since $\frac{\abs{t}^{2\M}}{\prod(t-\pos_i)^2}\sim 1$ and the other rows~\eqref{eq:otherrow} are bounded, we obtain that $F(t,\s_1,\ldots,\s_{2\M})$ converges uniformly in $(\s_j)_{j=1}^{2\M}$ towards 
\begin{align}
  F(\pm\infty,\s_1,\ldots,\s_{2\M}) &\eqdef  \int_{[0,1]^{2\M-1}}\theta_1^{\alpha_1}\ldots \theta_{2\M-2}^{\alpha_{2\M-2}}\-\-H(\pm\infty,(\s_j)_{1\leq j\leq 2\M},(\theta_j)_{1\leq j\leq 2\M-1})\d\theta_1\ldots\d\theta_{2\M-1}. 
\end{align}
where $H(\pm\infty,(\s_j)_{1\leq j\leq 2\M},(\theta_j)_{1\leq j\leq 2\M-1})$ is the determinant of the matrix whose first row is $\begin{pmatrix}
  1& 0& \cdots & 0
\end{pmatrix}$ and the other rows are given by~\eqref{eq:otherrow}.

As a result $F$ is continuous on the compact set $\overline{\RR}\times [\s_*-r',\s_*+r']$, hence uniformly continuous. For all $\varepsilon>0$, there exists $r>0$ such that $\max_{i}|\s_i-\s_*|\leq r$ implies 
\begin{align}\label{eq:epsilon}
 | F(t,\s_1,\ldots,\s_{2\M}) - F(t,\s_*,\ldots,\s_*) |\leq \varepsilon.
\end{align}

Now, there is some constant $\gamma>0$ such that 
\begin{align*}
  F(t,\s_*,\ldots,\s_*)&=  \frac{\abs{t}^{2\M}}{\prod(t-\pos_i)^2} \gamma 
  \begin{mydet}
    1 & \psi(t,\s_*) & \partial_2\psi(t,\s_*) & \cdots  & (\partial_2)^{2\M-1}\psi(t,\s_*) \\
    1 & \psi(\pos_1,\s_*) & \partial_2\psi(\pos_1,\s_*) & \cdots  & (\partial_2)^{2\M-1}\psi(\pos_1,\s_*)\\ 
  0 & \partial_1\psi(\pos_1,\s_*) & \partial_1\partial_2\psi(\pos_1,\s_*) & \cdots  & \partial_1(\partial_2)^{2\M-1}\psi(\pos_1,\s_*)\\
    \vdots & \vdots & \vdots  & & \vdots\\
    1 & \psi(\pos_\M,\s_*) & \partial_2\psi(\pos_\M,\s_*) & \cdots  & (\partial_2)^{2\M-1}\psi(\pos_\M,\s_*)\\ 
  0 & \partial_1\psi(\pos_\M,\s_*) & \partial_1\partial_2\psi(\pos_\M,\s_*) & \cdots  & \partial_1(\partial_2)^{2\M-1}\psi(\pos_M,\s_*)
  \end{mydet}
\end{align*}
We have already met similar determinants in the proof of Proposition~\ref{prop:fullgaussian}. They characterize the number of roots (including multiplicity) of a function of the form
\begin{align}
  t\longmapsto  b_0 + (b_{1,0} +b_{1,1}t + \ldots +b_{1,2\M-1}t^{2\M-1})e^{-(t-\s_*)^2/2},
\end{align}
which is at most $2\M$ by~\cite[Lemma 2.7]{schiebinger2015superresolutionjournal}. Therefore, arguing as in Proposition~\ref{prop:fullgaussian}, it is possible to show that this determinant is positive, including for $t=\pos_i$. Moreover $F(\pm\infty,\s_*,\ldots,\s_*)>0$ by extended total positivity of the Gaussian. As a result, the function  $t\mapsto F(t,\s_*,\ldots,\s_*)$ has a lower bound $\ell>0$ on $[-\infty,+\infty]$.

In~\eqref{eq:epsilon} it is sufficient to choose $\varepsilon<\ell/2$, and for the corresponding $r>0$, $\max_i |\s_i-\s_*|\leq r$ implies $F(t,\s_1,\ldots, \s_{2\M})>0$, hence $\detBV(t,\s_1,\ldots,\s_{2\M})>0$ for $\s_*-r\leq \s_1<\ldots <\s_{2\M}\leq \s_*+r$.
 We conclude by applying Proposition~\ref{prop:basiccompo}. The case of $\pos_0$ being $2\M-1$ non-degenerate is similar.
\end{proof}

\bibliographystyle{plain}
\bibliography{refs}
\end{document}